\newcommand{\bparagraph}[1]{\indent\paragraph{\textit{#1}}}
\newcommand\Perp{\protect\mathpalette{\protect\independenT}{\perp}}
\def\independenT#1#2{\mathrel{\rlap{$#1#2$}\mkern2mu{#1#2}}}
\newcommand{\au}{\textup{a}}
\newcommand{\bu}{\textup{b}}
\newcommand{\cu}{\textup{c}}
\newcommand{\du}{\textup{d}}
\newcommand{\eu}{\textup{e}}
\newcommand{\fu}{\textup{f}}
\newcommand{\forest}{F^G}
\newcommand{\gu}{\textup{g}}
\newcommand{\hu}{\textup{h}}
\newcommand{\iu}{\textup{i}}
\newcommand{\ju}{\textup{j}}
\newcommand{\ku}{\textup{k}}
\newcommand{\lu}{\textup{l}}
\newcommand{\Au}{\mathbf{a}}
\newcommand{\Bu}{\mathbf{b}}
\newcommand{\Cu}{\mathbf{c}}
\newcommand{\Du}{\mathbf{d}}
\newcommand{\Gu}{\textup{G}}
\newcommand{\Hu}{\textup{H}}
\newcommand{\Iu}{\textup{I}}
\newcommand{\Ju}{\textup{J}}
\newcommand{\Ku}{\textup{K}}
\newcommand{\Lu}{\textup{L}}
\newcommand{\muu}{\textup{m}}
\newcommand{\met}[1]{\operatorname{met} #1}
\newcommand{\nuu}{\textup{n}}
\newcommand{\ou}{\textup{o}}
\newcommand{\A}{\mathcal{A}}
\newcommand{\B}{\mathcal{B}}
\newcommand{\C}{\mathcal{V}}
\newcommand{\Cr}{\mathrm{\Gamma}}
\newcommand{\id}{\mathrm{id}}
\newcommand{\eh}{\mathcal E}
\newcommand{\F}{\mathcal{F}}
\newcommand{\field}{K}
\newcommand{\vfa}{X}
\newcommand{\vfb}{Y}
\newcommand{\G}{\mathcal{G}}
\newcommand{\I}{\mathcal{I}}
\newcommand{\J}{\mathcal{J}}
\newcommand{\pfd}{p.f.d.\@\xspace}
\newcommand{\Pleft}{P_{\mathrm{left}}}
\newcommand{\Q}{\mathbb{Q}}
\newcommand{\R}{\mathbb{R}}
\newcommand{\T}{\mathcal{T}}
\newcommand{\Z}{\mathbb{Z}}
\newcommand{\Set}{\mathbf{Set}}
\newcommand{\Rips}{\mathcal{V}}
\newcommand{\asplit}{\mathrm{asp}}
\newcommand{\spl}{\mathrm{sp}}
\newcommand{\jn}{\mathrm{jn}}
\newcommand{\total}{\mathrm{tot}}
\DeclareMathOperator{\diam}{diameter}
\DeclareMathOperator{\Gall}{Gall}
\DeclareMathOperator{\len}{length}
\DeclareMathOperator{\lens}{lengths}
\DeclareMathOperator{\nde}{nd}
\DeclareMathOperator{\grSum}{\vee}
\DeclareMathOperator{\Trim}{Trim}
\newtheorem{theorem}{Theorem}[section]
\newtheorem*{utheorem}{Theorem}
\newtheorem{proposition}[theorem]{Proposition}
\newtheorem{lemma}[theorem]{Lemma} 
\newtheorem{corollary}[theorem]{Corollary}
\theoremstyle{definition}
\newtheorem{definition}[theorem]{Definition}
\newtheorem{example}[theorem]{Example}
\newtheorem{remark}[theorem]{Remark}
\begin{document}

\title{Quantifying Genetic Innovation:\\ Mathematical Foundations for \\ the Topological Study of Reticulate Evolution}

\author{Michael Lesnick}
\address{Lesnick: SUNY Albany, NY, USA}
\email{mlesnick@albany.edu}
\author{Ra\'ul Rabad\'an}
\address{Rabad\'an: Columbia University, New York, NY, USA}
\email{rr2579@cumc.columbia.edu}
\author{Daniel I. S. Rosenbloom}
\address{Rosenbloom: Merck Research Laboratories, Rahway, NJ, USA}
\email{daniel.rosenbloom@merck.com}

\begin{abstract}
A topological approach to the study of genetic recombination, based on persistent homology, was introduced by Chan, Carlsson, and Rabad\'an in 2013. This associates a sequence of signatures called \emph{barcodes} to genomic data sampled from an evolutionary history. In this paper, we develop theoretical foundations for this approach.

First, we present a novel formulation of the underlying inference problem. Specifically, we introduce and study the \emph{novelty profile}, a simple, stable statistic of an evolutionary history which not only counts recombination events but also quantifies how recombination creates genetic diversity. We propose that the (hitherto implicit) goal of the topological approach to recombination is the estimation of novelty profiles.

We then study the problem of obtaining a lower bound on the novelty profile using barcodes. We focus on a low-recombination regime, where the evolutionary history can be described by a directed acyclic graph called a \emph{galled tree}, which differs from a tree only by isolated topological defects. We show that in this regime, under a complete sampling assumption, the $1^\mathrm{st}$ barcode yields a lower bound on the novelty profile, and hence on the number of recombination events. For $i>1$, the $i^{\mathrm{th}}$ barcode is empty.  In addition, we use a stability principle to strengthen these results to ones which hold for any subsample of an arbitrary evolutionary history.  To establish these results, we describe the topology of the Vietoris--Rips filtrations arising from evolutionary histories indexed by galled trees.

As a step towards a probabilistic theory, we also show that for a random history indexed by a fixed galled tree and satisfying biologically reasonable conditions, the intervals of the $1^{\mathrm{st}}$ barcode are independent random variables. Using simulations, we explore the sensitivity of these intervals to recombination.
\end{abstract}

\maketitle

\tableofcontents

\section{Introduction}

\subsection{Recombination}
Recombination is a process by which the genomes of two parental organisms combine to form a new genome.  Like genetic mutation, recombination gives rise to genetic diversity in evolving populations. But unlike mutation, recombination can unite advantageous traits which have arisen in separate lineages, or rescue an advantageous trait from an otherwise disadvantageous genetic background.  In these ways, recombination hastens the pace at which adaptive genetic novelty arises.  

Evolving populations can be studied by observing genetic sequences obtained from a sample of organisms.  Several methods exist to estimate or bound the number of recombination events that have occurred in the ancestry of a sample and to identify the genomic locations where recombination may have occurred \cite{Hudson:1985wh, Myers:2003tv, Song:2005cg}. Yet these methods do not reveal how recombination generates genetic diversity: Recombination between two very distinct parents may create a genetically very novel offspring, contributing substantial diversity to the population, but recombination between genetically similar parents can only create genetically similar offspring, contributing little diversity.  

\subsection{Novelty Profiles}
In this work, we introduce a simple, stable statistic of an evolving population, the \emph{novelty profile}, which quantifies how recombination contributes to genetic diversity.  To define the novelty profile, we first need to select a formal model of an evolving population.  We call the model we consider in this paper an \emph{evolutionary history}.  An evolutionary history $E$ is a directed acyclic graph $G$, together with a set $E_v$ at each vertex $v$ of $G$, satisfying certain conditions.  We call $G$ a \emph{phylogenetic graph}, and say that $G$ \emph{indexes} $E$.  Each vertex of $G$ represents an organism, each edge of $G$ represents a parental relationship, and each $E_v$ specifies the genome of the organism $v$.  See \cref{Sec:Phylogenetic_graphs_and_populations} for the formal definition of an evolutionary history and an illustration.

The novelty profile of an evolutionary history is simply a list of $k$ monotonically decreasing numbers, where $k$ is the number of recombination events in the history.  Roughly, each number measures the contribution to genetic diversity of one recombinant.  We introduce two versions of this statistic, the \emph{temporal} and \emph{topological} novelty profiles.  The definition of the temporal novelty profile is very elementary and intuitive, but depends on a specification of the time at which each organism is born.  Moreover, the temporal novelty profile, while stable to perturbations (i.e., small changes) of the genomes,  is unstable to perturbations of the birth times.  In contrast, the topological novelty profile is defined in a way that does not depend on birth times.  It is also stable to perturbations of the genomes.  The topological novelty profile is a lower bound for the temporal novelty profile, in the sense that the $i^{\mathrm{th}}$ element of the topological novelty profile is less than or equal to the $i^{\mathrm{th}}$ element of the temporal novelty profile for all $i$.  

\subsection{Prior Topological Work on Recombination}

The broader idea of quantifying the scale of recombination events, in addition to their number, is already present in earlier topological work on recombination \cite{chan2013topology, Emmett:2014us, Emmett:2014um, Camara:2016eh, Camara:2016kl,parida2015topological}; the recent textbook \cite{rabadan2019topological} provides an detailed introduction.  Our definitions of novelty profiles are inspired by some of this previous work, and one of our main objectives here is to use novelty profiles to develop mathematical foundations for that work.

In the previous work, a popular topological data analysis method called \emph{persistent homology} is used to associate a sequence $\B{}_0(S)$, $\B{}_1(S)$, $\B{}_2(S),\ldots$ of objects called \emph{barcodes} to an arbitrary sample $S$ of an evolving population.  Each barcode is a collection of intervals $[a,b)$ on the real line.   In \cite{chan2013topology}, it is shown that, under a standard \emph{infinite sites} assumption ruling out multiple mutations at the same genetic site, if no recombination occurs in the population's history, then $\B{}_i(S)$ is empty for all $i\geq 1$; see \cref{Sec:Barcodes_From_Trees}.  Hence a non-empty barcode $\B{}_i(S)$ for any $i\geq 1$ certifies that recombination has occurred at some point in the history.  Within simulations of evolving populations, the number of intervals in the first barcode $\B{}_1(S)$ has been observed to increase with the simulated recombination rate \cite{Camara:2016eh,chan2013topology}.  
Moreover, it has been observed empirically that the endpoints of the intervals in the barcode $\B{}_1(S)$ depend on the genetic scale at which recombination events occur \cite{Emmett:2014us, Camara:2016kl,parida2015topological}.  
For instance, in studies of population admixture (i.e., interbreeding between distantly related subpopulations), intervals in $\B{}_1(S)$ with large values for the endpoints have been observed to appear in the barcode only in the presence of admixture \cite{parida2015topological,Camara:2016kl}.

While these findings together suggest that the barcode encodes information about both the number of recombination events and the contribution of recombination to genetic diversity,  the precise statistical nature of the relationship between barcodes and recombination has not been made clear.  In this paper, we make progress towards understanding this relationship.

\subsection{Barcodes as Lower Bounds of Novelty Profiles}
We propose that the central inference problem implicit in the previous topological work on evolution is the estimation of novelty profiles.  Given this, we are led to ask how the barcodes $\B{}_i(S)$ of genomic data studied in the previous work perform as estimators of the novelty profile.  It is known that these barcodes can fail to detect recombination events,  even in the simplest and most favorable circumstances \cite{chan2013topology}, so one expects the barcodes to encode only partial information about the novelty profile.  

In this paper, we study barcodes as lower bounds on the novelty profile.  For context, we note that computable lower bounds on numbers of recombination events play a key role in the study of recombination \cite{Hudson:1985wh, Myers:2003tv,Song:2005cg}.  Our lower bounds are in a similar spirit.  Similarly, in topological data analysis, the idea of using barcodes to formulate lower bounds (e.g., on the Gromov-Hausdorff distance between compact metric spaces) is fundamental---it lies at the heart of the well-known stability theory for persistence \cite{bauer2015induced,chazal2012structure,chazal2014persistence}; see \cref{Sec:Topological_Preliminaries}.

To formulate our bounds, we first restrict attention to a low-recombination regime, where the evolutionary histories are indexed by \emph{galled trees}. Galled trees are directed acyclic graphs that are almost trees, in a sense: They may have cycles, but these cycles are topologically separated from one another; see \cref{Sec:Galled_Trees} for the precise definition. 
Galled trees have received considerable attention in the phylogenetics literature as computationally convenient models of evolution with infrequent recombination \cite{huson2010phylogenetic}, \cite{gusfield2014recombinatorics}.  They have been of interest primarily because certain phylogenetic network reconstruction problems that are computationally hard in general admit polynomial-time solutions when restricted to galled trees.
To clarify the biological relevance of galled tree models of evolution, in \cref{Sec:Appendix} we study the probability $P$ that a galled tree correctly models an evolutionary history.  We work with a coalescent model of evolution, a standard model in population genetics.  We show that for this model, $P$ can be computed by solving a linear system of equations, and we observe that for a fixed population size, $P$ tends to 1 as the recombination rate tends to 0; see also \cref{Rem:Galled_Low_Recomb}.

We observe that for evolutionary histories indexed by galled trees, the temporal and topological novelty profiles are equal (\cref{Equality_Of_Nov_Profiles_For _Galled_Trees}).  Our main result relating barcodes to  recombination in the galled tree setting is the following (see \cref{Thm:Lower_Bound} and the preceding definitions for the precise formulation):

\begin{utheorem}
Let $\eh$ be an evolutionary history indexed by a galled tree.
\begin{enumerate}
\item[(i)] The set of lengths of intervals in the barcode $\B_1(\eh)$ is a lower bound on the novelty profile.  In particular, the number of intervals in $\B_1(\eh)$ is a lower bound on the number of recombination events in $\eh$.  
\item[(ii)] $\B_i(\eh)$ is empty for $i\geq 2$.
\end{enumerate}
\end{utheorem}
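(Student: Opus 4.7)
My plan is to analyze the Vietoris--Rips filtration $\Rips_r(\met{\eh})$ of the metric space associated to $\eh$, exploiting the fact that in a galled tree the galls are pairwise topologically separated, so their contributions to $H_*$ can be computed locally and then assembled. In the gall-free (ordinary tree) case, I would first show that $\Rips_r(\met{\eh})$ is contractible at every scale $r$, for instance by exhibiting a discrete Morse function with a single critical vertex, or by an explicit simplicial contraction up the tree that sends each simplex $\sigma$ to $\sigma$ together with the nearest common ancestor of its vertices. This handles both (i) and (ii) vacuously when there are no galls, and supplies the local model away from any gall.

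For a single gall, which consists of two internally vertex-disjoint directed paths $P_1,P_2$ from a common ancestor $w$ to a recombinant vertex $v$, together with the descendants of $v$, I would carry out a Morse-style collapse of the corresponding subcomplex of $\Rips_r$. The collapse absorbs everything outside the $P_1\cup P_2$ cycle into contractible tree-like pieces and leaves a single $1$-cycle. I would then compute the threshold scales at which this cycle is born and filled in, obtaining a single interval in $H_1$ whose length I expect to be at most the contribution of the recombinant to the temporal novelty profile. Since the temporal and topological novelty profiles coincide for galled trees (by the equality stated just before the theorem), this gives the lower bound required in (i) for one gall. Part (ii) follows for the gall because the collapsed complex is homotopy equivalent to a $1$-complex, so supports no $H_i$ for $i\geq 2$.

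To globalize, I would use the topological separation of galls: the subcomplexes of $\Rips_r(\met{\eh})$ supported on distinct galls meet only along tree-like, contractible pieces for every $r$. A Mayer--Vietoris or nerve-type argument then shows that $\B_1(\eh)$ is the disjoint union of the per-gall $H_1$ intervals and $\B_i(\eh)=\emptyset$ for $i\geq 2$, which combined with the single-gall analysis finishes the proof. The main obstacle I anticipate is the precise matching of the death scale of the per-gall $H_1$ class with the novelty of the corresponding recombinant: the death scale is governed by ``short-cut'' distances across the cycle, whereas novelty is defined in terms of distances between the offspring and its parents' lineages. Since we need only a lower bound, I would aim for the easier inequality ``interval length $\leq$ novelty'', which should follow by producing an explicit $2$-chain filling the cycle at scale equal to the novelty value.
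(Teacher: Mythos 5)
Your outline points in the same general direction as the paper (decompose into tree pieces and galls, collapse via discrete Morse theory, bound the per-gall interval by the novelty, assemble), but it has a genuine gap at each of the two load-bearing points. For the single gall, you assume the collapse ``leaves a single $1$-cycle,'' and that is precisely the hard part; nothing in your sketch delivers it. For a history indexed by a directed loop the relevant metric space is \emph{almost linear} (the loop minus the recombinant is isometric to a subset of $\R$), and the paper's \cref{Thm:ALMS} needs three separate arguments: a two-stage discrete gradient vector field showing each component of $\Rips(P)_r$ is a wedge of finitely many circles; a triangle-inequality argument exploiting the linear order on $P\setminus\{p\}$ to rule out two or more circles at any single scale; and, crucially for \emph{persistence}, a proof that whenever the components $C_r\subset C_{r'}$ containing $p$ are both circles, the inclusion is a homotopy equivalence. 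Without this last step, ``at most one circle at each scale'' is still compatible with several short intervals in $\B_1$, which would destroy the injection of intervals into recombinants and hence part (i). Your plan to ``compute the threshold scales'' at which the cycle is born and filled is also not realistic: the paper only \emph{bounds} the interval, and the inequality $\len(I)\leq$ novelty is obtained not from an explicit $2$-chain but from subspace stability (\cref{Thm:SubspaceStability}) applied to $P\setminus\{p\}\subset P$, using $\B_1(P\setminus\{p\})=\emptyset$ (\cref{Prop:Tree_Like_MS}) and $d_H(P,P\setminus\{p\})=d(p,P\setminus\{p\})$, which is exactly the novelty of the recombinant. Your $2$-chain idea is morally a hand-proof of that special case and might be executable, but as written it is only a hope.

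The globalization step also fails as stated. You claim the subcomplexes supported on distinct galls ``meet only along tree-like, contractible pieces for every $r$,'' but $\Rips(\eh)_r$ contains a simplex on \emph{every} vertex set of diameter at most $2r$, so at larger scales there are many simplices mixing vertices of distinct galls that contain no gluing vertex and lie in neither subcomplex; the naive Mayer--Vietoris cover is not a cover. The paper's fix operates at the metric level: $\met{\eh}$ is an iterated coproduct of based metric spaces, one per directed loop and per tree piece (\cref{Prop:Decomposing_Evolutionary_Graphs}), and \cref{Prop:Barcodes_Of_1_Point_Unions} shows the inclusion $\Rips(P)\vee\Rips(Q)\hookrightarrow\Rips(P\vee Q)$ is an objectwise homotopy equivalence, precisely via a discrete Morse matching that cones every mixed simplex onto the basepoint. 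With that lemma your assembly goes through and yields both (i) and (ii); without it, it does not. A further small caution: your ``add the nearest common ancestor'' contraction in the tree case requires the nca to belong to the vertex set, which holds for a complete history but not for the tree-like metric subspaces your collapses produce; the paper avoids this by quoting the contractibility result of Chan et al.\ directly.
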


Part (i) of the theorem does not hold for barcodes $\B_1(S)$ of arbitrary samples $S\subseteq \eh$ (\cref{Ex:Subsample_Counterexample}).  However, using a well-known stability property of persistent homology, we observe that the theorem extends to an approximate version which holds for an arbitrary sample $S$, even in the presence of noise (\cref{Cor:GalledTreeDetInference_withnoise}).  The quality of the approximation depends on the similarity of the geometries of $S$ and $\eh$, as measured by the \emph{Gromov-Hausdorff distance}.  Along similar lines, the theorem further extends to an approximate version for histories indexed by arbitrary phylogenetic graphs (\cref{Cor:Inference_Arbitrary_G}); here, the quality of approximation is controlled by the number of mutations which must be ignored to obtain a history indexed by a galled tree.

These results are deterministic; in cases where the history is sampled at random from a known distribution, one hopes to be able to obtain stronger probabilistic results.  As a first step towards such results, we show in \cref{Sec:Independence} that for a random history indexed by a fixed galled tree $G$ and satisfying a biologically reasonable independence condition, 
the intervals of the $1^\mathrm{st}$ barcode are independent random variables indexed by the recombinants of $G$.

We then study the distributions of these random variables via simulation, for one class of random models of genetic sequence evolution.  Our simulation results indicate that even when we have sampled all individuals in the evolutionary history, the barcode is usually a rather loose lower bound on the novelty profile.  For example, in the most favorable circumstances, a recombinant of high novelty is detected in our simulations about a third of the time.  Nevertheless, the barcodes provide partial information about the novelty profile.  Notably, we observe in our simulations that when a recombination event of novelty $n$ is detected by the barcode, the average length of the corresponding interval is approximately $c+d\sqrt{n}$ for constants $c$ and $d$.

\subsection{Remarks on the Practical Inference of Novelty Profiles}
The primary motivation for our results relating barcodes to novelty profiles is to further our understanding of the prior topological work on recombination.  Our results clarify what the barcodes of genomic  data do and do not tell us about recombination, and they highlight the mathematical challenges involved in understanding the connection between barcodes and recombination more fully.

That said, we are hopeful that novelty profiles can find practical use in biology applications.  For this, we need to be able to infer (statistics of) novelty profiles from real-world genomic data. It is thus natural to ask whether our bounds relating barcodes to novelty profiles can be applied in practice to such inference.  In \cref{Sec:Assumptions}, we consider this question in detail.  While we find that such inference may be possible in some circumstances, for typical genomic data the assumptions underlying our bounds seem too strong to apply in a useful way.  Thus, we see our bounds as a first step towards a more applicable theory for inferring information about novelty profiles from topological statistics of genomic data.  Some possible directions forward along these lines are discussed in \cref{Sec:Discussion}.  

While we are indeed hopeful that our bounds can be strengthened to obtain results that are more readily applied in practice, we expect that in the near term, it may be more fruitful for practical applications to pursue alternative approaches to the inference of (statistics of) novelty profiles.  For smaller samples, it may be effective to estimate the novelty profile by first estimating the full evolutionary history of the sample; there is well-developed technology for this \cite{gusfield2014recombinatorics,Rasmussen:2014cq}.  For larger samples, where estimation of a full evolutionary history is computationally infeasible, a machine learning approach may be the most practical way forward; for this, it may be possible to adapt ideas from recent work on the learning of recombination rates from topological features \cite{humphreys2019fast}.  While a full development of these ideas is beyond the scope of this paper, we discuss them briefly in \cref{Sec:Alternative_Strategies}.

\subsection{Other Theoretical Work on the Topological Approach to Recombination} Theoretical foundations for the application of persistent homology to recombination have also been studied in recent work of C\'amara et al. \cite{Camara:2016kl} and Parida et al. \cite{parida2015topological}, though from a rather different angle than ours.  \cite{Camara:2016kl} considers connections to the problem of constructing \emph{minimal ancestral recombination graphs} (ARGs) for single-breakpoint models of recombination. (An ARG roughly corresponds to what we call an evolutionary history in this paper; a minimal ARG is a history of a given set of genome sequences with as few recombination events as possible.) In contrast, we do not consider ARG reconstruction or constrain recombination to a single-breakpoint model.

The theory developed in \cite{parida2015topological} concerns population admixture.  The work models the evolution not only of individual organisms, but also of entire populations, and defines barcodes signatures both at the individual level and at the population level.  It is shown that under natural assumptions on the inter-population and intra-population genetic distances, one can deduce information about barcodes at the population level from barcodes at the individual level.  However, no direct theoretical relationship is established between the barcodes and recombination or admixture.

While our aim and technical approach differ from these previous works, we do share the common goal of understanding persistent homology as a signature of genetic recombination.

\subsection{Mathematical Contributions}
One key feature of the barcode signatures of recombination studied here is that they depend only on the \emph{metric structure} on an evolutionary history, i.e., the genetic distances between organisms---in our formalism, the Hamming distance, or monotonic transformations thereof such as the Jukes-Cantor distance.  In fact, these barcodes are given by a standard construction which associates barcodes to any finite metric space $M$.  In this construction, one first builds a 1-parameter family of simplicial complexes $\Rips(M)$ called the \emph{Vietoris--Rips filtration (VRF)}.  

The topological study of VRFs is a central theoretical problem in topological data analysis.  While some fundamental results about VRFs are well known, including a stability theorem \cite{chazal2009gromov,chazal2014persistence,blumberg2017universality}, relatively little is known about concrete computations of the topology of VRFs, outside of special cases; even for points distributed uniformly on a circle or ellipse, the problem is already non-trivial, and has been the subject of recent research \cite{adamaszek2017vietoris,adamaszek2017vietorisB}.  

The result of Chan et al. \cite{chan2013topology}, that $\B_i(S)=\emptyset$ for $i\geq 1$ when $S$ is a sample of a history with no recombination, amounts to a proof that the VRF of a \emph{tree-like} metric space is topologically trivial, up to multiplicity of connected components; see \cref{Prop:Tree_Like_MS}.  Analogously, the mathematical heart of our main results about recombination for galled trees is a topological description of $\Rips(\eh)$, for $\eh$ an evolutionary history indexed by a galled tree, regarded as a metric space: We use discrete Morse theory \cite{forman2002user} to show that each simplicial complex in $\Rips(\eh)$ is homotopy equivalent 
to a disjoint union of bouquets of circles, where each circle corresponds to a unique recombination event.  Moreover, we completely describe the topological behavior of the inclusion maps in $\Rips(\eh)$ and give bounds on the number of intervals in $\B_1(\eh)$.  For the precise statements, see \cref{Lem:Loop_Almost_Linear,Thm:ALMS,Prop:Decomposable_Populations}.

Our topological study of $\Rips(\eh)$ hinges on the study of the VRFs of \emph{almost linear metric spaces}; we say a metric space is almost linear if (up to isometry) it is obtained from a finite subspace of $\R$ by adding a single point.  In brief, almost linear metric spaces enter into our analysis in the following way: We observe in \cref{Prop:Decomposing_Evolutionary_Graphs} that, up to isometry, the metric space $\eh$ can be constructed by iteratively gluing together tree-like metric spaces and almost linear metric spaces using a coproduct construction.  (The coproducts are taken in a category of based metric spaces, allowing the basepoint to change.)  Moreover, letting $P\vee Q$ denote a coproduct of two based metric spaces $P$ and $Q$, we have that $\Rips(P\vee Q)$ is, up to homotopy, a wedge sum of $\Rips(P)$ and $\Rips(Q)$ (\cref{Prop:Barcodes_Of_1_Point_Unions}).  Since the VRFs of tree-like metric spaces are topologically trivial, it follows that to describe the topology of $\Rips(\eh)$, it suffices to describe the topology of the VRF of an almost linear metric space; \cref{Thm:ALMS} gives such a description.  

\bparagraph{Outline}
For some of the material of this paper, we must assume that the reader is familiar with elementary algebraic topology.  However, much of our material on novelty profiles does not require a background in algebraic topology, and we believe that this material may be of independent interest.  Thus, we have arranged the paper so that the material on topology appears as late as possible.

\cref{Sec:Phylogenetic_graphs_and_populations} introduces our mathematical formalism for working with evolving populations in the presence of recombination.  \cref{Sec:NoveltyProfiles} introduces novelty profiles.  \cref{Sec:Galled_Trees} reviews galled trees and establishes that in the special case of galled trees, the temporal and topological novelty profiles are equal.   \cref{Sec:Topological_Preliminaries} reviews aspects of persistent homology and discrete Morse theory needed in the remainder of our paper, and observes that the topological novelty profile is stable.  \cref{Sec:Barcodes_From_Trees}  briefly reviews the results of Chan et al. on barcodes of evolutionary histories indexed by trees.  \cref{Sec:Met_Decomposition} studies the VRFs of coproducts of based metric spaces, and \cref{Sec:ALMS} presents our topological analysis of the VRFs of almost linear metric spaces.  Using the results of  \cref{Sec:Barcodes_From_Trees,Sec:Met_Decomposition,Sec:ALMS}, \cref{Sec:Inference} establishes our main deterministic result about the barcodes of evolutionary histories indexed by galled trees.  
\cref{Sec:Extensions} applies the stability of persistent homology to extend this result to subsamples of histories indexed by arbitrary phylogenetic graphs.

\cref{Sec:Independence} establishes that for a suitably chosen random history indexed by a fixed galled tree, the intervals in the $1^\mathrm{st}$ persistence barcode are independent random variables.  With this as motivation, \cref{Sec:Sensitivity_Numerical} uses simulation to study the statistical properties of the barcode of a random history with a single recombination event.  \cref{Sec:Discussion} discusses the applicability of our results and ideas to real-world genomic data, and explores directions for future work.

Two appendices tie our results explicitly to coalescent theory.  \cref{Sec:Appendix} studies the probability that an evolutionary history generated by the coalescent model is a galled tree.  \cref{Sec:Violations} observes in simulation that, although our main result for histories indexed by galled trees does not hold exactly for arbitrary subsamples, the lower bound on the number of recombination events implied by that result is only rarely violated under subsampling.

\subsection*{Acknowledgements}
We thank Ulrich Bauer for helpful discussions about how to prove \cref{Thm:ALMS}, our main result about the Vietoris--Rips filtrations of almost linear metric spaces.  Ulrich provided valuable input about the use of the triangle inequality in that argument, and also suggested the use of the discrete gradient vector field of \cite{kahle2011random}.  We also thank Pablo C\'amara and Kevin Emmett for valuable discussions, Matthew Zaremsky for sharing the counterexample of \cref{Rem:AlmostTreeLike}, and Greg Henselman for helpful feedback on our discussion of discrete Morse theory. Finally, we thank Peter Landweber and the anonymous reviewers for suggestions which helped improve the paper.  Lesnick was partially supported by funding from the Institute for Mathematics and its Applications, NIH grants U54CA193313 and T32MH065214, and an award from the J. Insley Blair Pyne Fund. Rabad\'an and Rosenbloom were funded by NIH grants U54CA193313 and R01GM117591.

\section{Phylogenetic Graphs and Evolutionary Histories}
\label{Sec:Phylogenetic_graphs_and_populations}
We now introduce our mathematical formalism for the topological study of reticulate evolution.  The formalism is similar to that used elsewhere in the literature on reticulate evolution, though some of our terminology is non-standard; for context, see for example \cite{huson2010phylogenetic} and the references therein.

\begin{definition}[Phylogenetic Graph]
A \emph{phylogenetic graph} is a finite directed acyclic graph $G$ such that
\begin{enumerate}
\item[1.] $G$ has a unique vertex $r$, the \emph{root}, with in-degree 0, 
\item[2.] Each vertex of $G$ has in-degree at most 2.
\end{enumerate}
We call a vertex in $G$ of in-degree 1 a \emph{clone}, and a vertex of in-degree 2 a \emph{recombinant}.  If $(v,w)$ is a directed edge in $G$, we say $v$ is a \emph{parent} of $w$.  
We define a \emph{rooted tree} to be a phylogenetic graph with no recombinants. 
\end{definition}
\cref{Fig:Phylo_Graph} illustrates a simple phylogenetic graph.
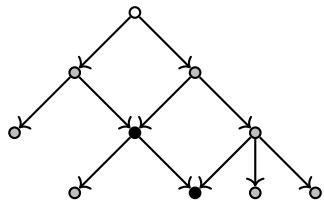
\begin{figure}[h]
\begin{center}
\begin{tikzpicture}[thick,scale=.8,->,
                   shorten >=2pt+0.5*\pgflinewidth,
                   shorten <=2pt+0.5*\pgflinewidth,
                   every node/.style={circle,
                                      draw,
                                      fill          = black!50,
                                      inner sep     = 0pt,
                                      minimum width =4 pt}]
                                      \tikzset{
    every node/.style={
        circle,
        draw,
        fill          = gray!50,
        inner sep     = 0pt,
        minimum width =4 pt
    }   
}  

       \coordinate (a) at (0,0);
       \coordinate (b) at (-1,-1);
       \coordinate (c) at (1,-1);
        \coordinate (d) at (-2,-2);
        \coordinate (e) at (0,-2);
        \coordinate (f) at (2,-2);
        \coordinate (g) at (-3,-3);
         \coordinate (h) at (-1,-3);
          \coordinate (i) at (1,-3);
          \coordinate (ih) at (2,-3);
           \coordinate (j) at (3,-3);
           
\path[draw] 

       
       node[fill=white] at (a) {} 
       node at  (b) {}
       node at  (c) {} 
       node at  (d) {} 
       node[fill=black] at  (e) {} 
       node at  (f) {}
         node at  (h) {}
          node[fill=black] at  (i) {}
          node at  (ih) {}
           node at  (j) {};
           

    \draw (a) -- (b) ;
    \draw (a) -- (c);
    \draw (b) -- (d);
    \draw (b) -- (e);
    \draw (c) -- (e);
    \draw (c) -- (f);
      \draw (f) -- (i);
      \draw (f) -- (j);
      \draw (f) -- (ih);
      \draw (e) -- (h);
      \draw (e) -- (i);
      

\end{tikzpicture}
\end{center}
\caption{A phylogenetic graph with 10 vertices: 7 clones (gray), 2 recombinants (black), and the root (white).}
\label{Fig:Phylo_Graph}
\end{figure}

For $G$ a rooted directed acyclic graph with vertex set $V$ and $S\subseteq V$, we say $v\in S$ is the \emph{minimum of $S$} if for all $s\in S$, any directed path from $r$ to $s$ in $G$ contains $v$.  $S$ may not have a minimum element, but if the minimum element exists, it is clearly unique.  

Let $\Set$ denote the collection of all finite sets.  

\begin{definition}[Evolutionary History]\label{Def:History}
For $G$ a phylogenetic graph with vertex set $V$, an \emph{(evolutionary) history indexed by $G$} is a map $\eh:V\to \Set$ with the following three properties:
\begin{enumerate}
\item[1.] If $w$ is a clone with parent $v$, then $\eh_v\subseteq \eh_w$.  
\item[2.] For each $m\in \cup_{v\in V} \eh_v$, the set $\{v\in V\mid m\in \eh_v\}$ has a minimum element.
\item[3.] If $w$ is a recombinant with parents $u$ and $v$, then \[\eh_u\cap \eh_v\subseteq \eh_w\subseteq \eh_u\cup \eh_v.\] 
\end{enumerate}
We call the elements of the sets $\eh_v$ \emph{mutations}.
\end{definition}

\cref{Fig:Population} gives an example of a history indexed by the phylogenetic graph of \cref{Fig:Phylo_Graph}. 

\begin{remark}The biological interpretation of the above definitions is this: A phylogenetic graph describes the ancestral relationships between organisms in a history, and each set $\eh_v$ specifies the genome of organism $v$, in terms of the difference between that genome and some fixed (unspecified) reference genome.  Properties 1 and 2 are standard in phylogenetics; they specify that each mutation arises only once in the history, and that each clone inherits all the mutations of its parent. Together, these two properties are often referred to as the \emph{infinite sites} assumption.  Property 3 stipulates that if both parents of a recombinant carry a mutation, then the recombinant inherits that mutation, and moreover, any mutation carried by a recombinant is inherited from a parent.  
\end{remark}

\begin{remark}
As indicated earlier, evolutionary histories are often called \emph{ancestral recombination graphs (ARGs)} in the literature \cite{gusfield2014recombinatorics, Rasmussen:2014cq}.  However, since we wish to make a clear distinction between the history and its underlying phylogenetic graph, we prefer the terminology presented here.
\end{remark}

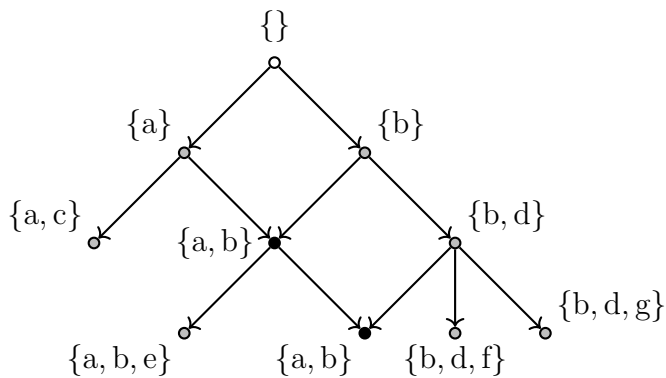
\begin{figure}[h]
\begin{center}
\begin{tikzpicture}[thick,scale=1.2,->,
                   shorten >=2pt+0.5*\pgflinewidth,
                   shorten <=2pt+0.5*\pgflinewidth] 

       \coordinate (a) at (0,0);
        \coordinate (aUp) at (0,.1);
       \coordinate (b) at (-1,-1);
       \coordinate (c) at (1,-1);
        \coordinate (d) at (-2,-2);
        \coordinate (e) at (0,-2);
         \coordinate (eLeft) at (-.1,-2);
        \coordinate (f) at (2,-2);
        \coordinate (g) at (-3,-3);
         \coordinate (h) at (-1,-3);
          \coordinate (i) at (1,-3);
               \coordinate (ih) at (2,-3);
           \coordinate (j) at (3,-3);
          \node[above] at (aUp) {$\{\}$};          
          \node[above left] at (b) {$\{\textup{a}\}$};          
               \node[above right] at (c) {$\{\textup{b}\}$};                  
          \node[above left] at (d) {$\{\au,\cu\}$};      
            \node[left] at (eLeft) {$\{\au,\bu\}$};    
              \node[above right] at (f) {$\{\bu,\du\}$};
                \node[below left] at (h) {$\{\au,\bu,\eu\}$};        
                 \node[below left] at (i) {$\{\au,\bu\}$};
                   \node[below] at (ih) {$\{\bu,\du,\fu\}$};
                      \node[above right] at (j) {$\{\bu,\du,\gu\}$};                                                                                 
                                                
                                                \tikzset{
    every node/.style={
        circle,
        draw,
        fill          = gray!50,
        inner sep     = 0pt,
        minimum width =4 pt
    }   
}
          
\path[draw] 

       
       node[fill=white] at (a) {} 
       node at  (b) {}
       node at  (c) {} 
       node at  (d) {} 
       node[fill=black] at  (e) {} 
       node at  (f) {}
         node at  (h) {}
          node at  (ih) {}
          node[fill=black] at  (i) {}     
           node at  (j) {};
           

    \draw (a) -- (b) ;
    \draw (a) -- (c);
    \draw (b) -- (d);
    \draw (b) -- (e);
    \draw (c) -- (e);
    \draw (c) -- (f);
          \draw (e) -- (h);
            \draw (e) -- (i);
      \draw (f) -- (i);
      \draw (f) -- (ih);
      \draw (f) -- (j);

\end{tikzpicture}
\end{center}
\caption{A history indexed by the phylogenetic graph of \cref{Fig:Phylo_Graph}.}
\label{Fig:Population}
\end{figure}

\begin{definition}[Symmetric Difference Metric on an Evolutionary History]\label{Def:Sym_Diff_Metric}
Define a metric $d$ on finite sets, the \emph{symmetric difference metric} by taking \[d(A,B)=|(A\cup B)\setminus (A\cap B)|\] for any finite sets $A$, $B$.  For any history $\eh$ indexed by a phylogenetic graph $G$ with vertex set $V$, this restricts to a metric on the set $\{\eh_v \mid v\in V\}$.  We denote the resulting metric space as $\met{\eh}$, or when no confusion is likely, simply as $\eh$.
\end{definition}

\begin{remark}
It is common in the phylogenetics literature to model genomes as binary vectors, and to metrize a set of genomes using the Hamming distance.  It is easy to see that the formalism we've introduced here is essentially  equivalent.  Under this equivalence, other common phylogenetic distances (e.g., Jukes-Cantor distance, Nei-Tamura distance) correspond to monotonic transformations of the symmetric difference metric $d$.  In fact, all the results of this paper formulated in terms of $d$ extend immediately to such monotonic transformations.
\end{remark}

\begin{remark}\label{Rem:Homoplasies}
In real-world evolving populations, the infinite sites assumption, described above, may not always hold. In other words, the same mutation may occur in different organisms despite being absent in their common ancestors. Such mutations, termed \emph{homoplasies}, may be observed in sampled data either if the per-site mutation rate is high (which is typical for species with short genomes, such as RNA viruses) or if the mutations confer high fitness. Homoplasies are typically rare for species with long genomes, as the probability of mutating twice at the same exact genetic site is small. If they do occur, homoplasies usually involve few sites, so that the metric space underlying the history differs only slightly from that of a history satisfying the infinite sites assumption.    
\end{remark}

\section{Novelty Profiles}\label{Sec:NoveltyProfiles}

\subsection{The Temporal Novelty Profile}
For $G$ a phylogenetic graph with vertex set $V$, define a partial order on $V$ by taking $v\leq w$ if there is a directed path in $G$ from $v$ to $w$.  We say $t:V\to \R$ is \emph{a time function} if $t(v)<t(w)$ whenever $v<w$.  We interpret  $t(v)$ as the birth time of organism $v$.

\begin{definition}[Temporal Novelty Profile]
Given a history $\eh$ indexed by $G$, a time function $t:V\to \R$, and a recombinant $r$ of $G$, we define $\mathcal N(r,t)$, the \emph{temporal novelty} of $r$, by 
\[\mathcal N(r,t):=\min\, \{d(\eh_v,\eh_r)\mid t(v)< t(r)\}.\]
We define $\mathcal N(\eh,t)$, the \emph{temporal novelty profile} of $\eh$ (with respect to $t$) to be the list of temporal novelties $\mathcal N(r,t)$, for all recombinants $r$ of $G$, sorted in decreasing order.
\end{definition}

\begin{example}\label{Ex:Temp_Novelty_Profile_Two_Time_Functions}
\cref{Fig:Novelty_Ex} illustrates novelty profiles for two histories indexed by the same simple phylogenetic graph.  For any time function on the history shown in \cref{Fig:Novelty_Ex1}, the unique recombinant has temporal novelty 1, so the temporal novelty profile of this history is the single-element list (1).  Similarly, for any time function on the history shown in \cref{Fig:Novelty_Ex2}, the temporal novelty profile is the single-element list (6).
\begin{figure}[h]
\begin{center}
\begin{subfigure}[b]{.40\linewidth}
\centering
\begin{center}
\begin{tikzpicture}[thick,scale=.8,->,
                   shorten >=2pt+0.5*\pgflinewidth,
                   shorten <=2pt+0.5*\pgflinewidth] 

       \coordinate (a) at (0,0);
        \coordinate (aUp) at (0,.1);
       \coordinate (b) at (-1,-1);
       \coordinate (c) at (1,-1);
        \coordinate (d) at (-2,-2);
        \coordinate (e) at (0,-2);
         \coordinate (eBelow) at (-.1,-2.1);
        \coordinate (f) at (2,-2);
        \coordinate (g) at (-3,-3);
         \coordinate (h) at (-1,-3);
          \coordinate (i) at (1,-3);
               \coordinate (ih) at (2,-3);
           \coordinate (j) at (3,-3);
          \node[above] at (aUp) {$\{\}$};          
          \node[above left] at (b) {$\{\au,\bu,\cu,\du,\eu,\fu\}$};          
               \node[above right] at (c) {$\{\Gu,\Hu,\Iu,\Ju,\Ku,\Lu\}$};                  
            \node[below] at (eBelow) {$\{\au,\bu,\cu,\du,\eu,\fu,\Gu\}$};    
                                                
                                                \tikzset{
    every node/.style={
        circle,
        draw,
        fill          = gray!50,
        inner sep     = 0pt,
        minimum width =4 pt
    }   
}
          
\path[draw] 

       
       node[fill=white] at (a) {} 
       node at  (b) {}
       node at  (c) {} 
       node[fill=black] at  (e) {}; 
           

    \draw (a) -- (b) ;
    \draw (a) -- (c);
    \draw (b) -- (e);
    \draw (c) -- (e);

\end{tikzpicture}
\end{center}
\caption{The temporal novelty profile of the history shown is the single-element list (1), for any time function. The topological novelty profile is the same.}
\label{Fig:Novelty_Ex1}
\end{subfigure}
\hskip30pt
\begin{subfigure}[b]{.40\linewidth}
\centering
\begin{center}
\begin{tikzpicture}[thick,scale=.8,->,
                   shorten >=2pt+0.5*\pgflinewidth,
                   shorten <=2pt+0.5*\pgflinewidth] 

       \coordinate (a) at (0,0);
        \coordinate (aUp) at (0,.1);
       \coordinate (b) at (-1,-1);
       \coordinate (c) at (1,-1);
        \coordinate (d) at (-2,-2);
        \coordinate (e) at (0,-2);
         \coordinate (eBelow) at (-.1,-2.1);
        \coordinate (f) at (2,-2);
        \coordinate (g) at (-3,-3);
         \coordinate (h) at (-1,-3);
          \coordinate (i) at (1,-3);
               \coordinate (ih) at (2,-3);
           \coordinate (j) at (3,-3);
          \node[above] at (aUp) {$\{\}$};          
          \node[above left] at (b) {$\{\au,\bu,\cu,\du,\eu,\fu\}$};          
               \node[above right] at (c) {$\{\Gu,\Hu,\Iu,\Ju,\Ku,\Lu\}$};                  
            \node[below] at (eBelow) {$\{\au,\bu,\cu,\Gu,\Hu,\Iu\}$};    
                                                
                                                \tikzset{
    every node/.style={
        circle,
        draw,
        fill          = gray!50,
        inner sep     = 0pt,
        minimum width =4 pt
    }   
}
          
\path[draw] 

       
       node[fill=white] at (a) {} 
       node at  (b) {}
       node at  (c) {} 
       node[fill=black] at  (e) {}; 
           

    \draw (a) -- (b) ;
    \draw (a) -- (c);
    \draw (b) -- (e);
    \draw (c) -- (e);

\end{tikzpicture}
\end{center}
\caption{The temporal novelty profile of the history shown is the single-element list (6), for any time function. The topological novelty profile is the same.}
\label{Fig:Novelty_Ex2}
\end{subfigure}
\caption{}
\label{Fig:Novelty_Ex}
\end{center}
\end{figure}
\end{example}

\begin{example}\label{Ex:Two_Recombs}
\cref{Fig:Novelty_Ex3} illustrates a history where two recombinants have the same parents.  For the time function shown, the novelty profile is (5,1).  The small second entry reflects the fact the two recombinant genomes are genetically close to one another.  Exchanging the time values of the bottom-most two vertices yields another time function for this history, for which the temporal novelty profile is (6,1).  If we take the time values of the bottom-most two vertices to both be $3$, then the temporal novelty profile is (6,5).
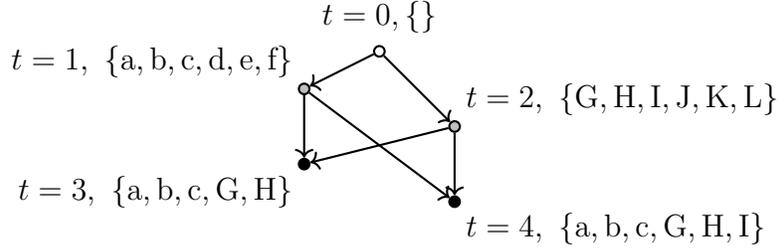
\begin{figure}[h]
\centering
\begin{center}
\begin{tikzpicture}[thick,scale=1.0,->,
                   shorten >=2pt+0.5*\pgflinewidth,
                   shorten <=2pt+0.5*\pgflinewidth] 

       \coordinate (a) at (0,0);
        \coordinate (aUp) at (0,.1);
       \coordinate (b) at (-1,-.5);
       \coordinate (c) at (1,-1);
        \coordinate (d) at (-2,-2);
        \coordinate (e) at (-1,-1.5);
         \coordinate (eLeft) at (-1.1,-1.5);
        \coordinate (f) at (1,-2);
          \coordinate (fRight) at (1.1,-2);
        \coordinate (g) at (-3,-3);
         \coordinate (h) at (-1,-3);
          \coordinate (i) at (1,-3);
               \coordinate (ih) at (2,-3);
           \coordinate (j) at (3,-3);
          \node[above] at (aUp) {$t=0, \{\}$};          
          \node[above left] at (b) {$t=1,\ \{\au,\bu,\cu,\du,\eu,\fu\}$};          
               \node[above right] at (c) {$t=2,\ \{\Gu,\Hu,\Iu,\Ju,\Ku,\Lu\}$};                  
            \node[below left] at (e) {$t=3,\ \{\au,\bu,\cu,\Gu,\Hu\}$};    
             \node[below right] at (f) {$t=4,\ \{\au,\bu,\cu,\Gu,\Hu,\Iu\}$};    
                                                
                                                \tikzset{
    every node/.style={
        circle,
        draw,
        fill          = gray!50,
        inner sep     = 0pt,
        minimum width =4 pt
    }   
}
          
\path[draw] 

       
       node[fill=white] at (a) {} 
       node at  (b) {}
       node at  (c) {} 
       node[fill=black] at  (e) {}
       node[fill=black] at  (f) {}; 
           

    \draw (a) -- (b) ;
    \draw (a) -- (c);
    \draw (b) -- (e);
    \draw (c) -- (e);
        \draw (b) -- (f);
    \draw (c) -- (f);

\end{tikzpicture}
\end{center}
\caption{The temporal novelty profile of the history and time function shown is (5,1); the topological novelty profile is the same.}
\label{Fig:Novelty_Ex3}
\end{figure}
\end{example}

\begin{remark}[Stability]\label{Remark:Temporal_Novelty_Stability}
Suppose we are given histories $\eh$ and $\eh'$ indexed by the same phylogenetic graph $G$ with $|d(\eh_v,\eh_w)-d(\eh'_v,\eh'_w)|\leq \epsilon$ for all vertices $v,w$ of $G$.  Note that this condition holds by the triangle inequality if $d(\eh_v,\eh_{v'})\leq \frac{\epsilon}{2}$ for all $v\in G$.  Let $t$ be any time function on the vertices of $G$.  We then have that \[d_\infty(\mathcal N(\eh,t),\mathcal N(\eh',t))\leq \epsilon,\] where for vectors $A$ and $B$ of the same length, $d_\infty(A,B):=\max_i |A_i-B_i|$. Thus, the temporal novelty profile is stable with respect to genomic perturbations.  

However, the temporal novelty profile is unstable with respect to perturbations of the time function.  For example, consider the history $\eh$ of \cref{Ex:Two_Recombs} (\cref{Fig:Novelty_Ex3}). For $\delta\in (-1,\infty)$, let $t_\delta$ be the time function obtained from the time function $t$ shown in \cref{Fig:Novelty_Ex3} by changing the time value of the vertex on the bottom right from 4 to $3 + \delta$. Then for all $\delta \in (0, 1)$, we have
\[\mathcal N(\eh,t_\delta)=(5,1),\qquad \mathcal N(\eh,t_{-\delta})=(6,1),\]
and therefore
\[d_\infty(\mathcal N(\eh,t_\delta),\mathcal N(\eh,t_{-\delta})) = 1,\]
whereas stability would require that this distance approach $0$ as $\delta$ approaches 0.
\end{remark}

\subsection{The Topological Novelty Profile}

\begin{definition}[Relative Minimum Spanning Tree] Given a weighted graph $G$ and a forest $F\subseteq G$ (i.e., a vertex-disjoint collection of subtrees), we define a \emph{spanning tree of $G$ rel $F$} simply to be a spanning tree $T$ of $G$ containing $F$.  We say $T$ is a \emph{minimum} spanning tree of $G$ rel $F$ if the sum of the edge weights of $T$ is as small as possible, among all spanning trees of $G$ rel $F$.  
\end{definition}

Note that by collapsing each tree in $F$ to a point, the problem of finding a minimum spanning tree rel $F$ is equivalent to the standard problem of finding an ordinary minimum spanning tree on a multigraph.  (A multigraph is a graph which is allowed to have multiple edges between pairs of vertices.)  Thus, all the basic facts about minimum spanning trees have analogues for relative minimum spanning trees.  For example, we have the following:

\begin{proposition}\label{Prop: Rel_Spanning_Tree_Indexwise_Min}
A spanning tree $T \textup{ rel } F$ is minimum if and only if for all $i$, the $i^{\mathrm{th}}$ smallest edge weight is less than or equal to the $i^{\mathrm{th}}$ smallest edge weight in any other spanning tree $\textup{rel }F$.  
\end{proposition}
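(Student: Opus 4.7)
The plan is to reduce to the classical indexwise-minimality property of ordinary minimum spanning trees, using the contraction observation already made by the author. Collapsing each tree of $F$ to a single vertex defines a bijection between spanning trees of $G$ rel $F$ and ordinary spanning trees of the resulting multigraph $G/F$, and this bijection preserves the weights on non-$F$ edges; the edges of $F$ contribute the same constant total weight to every spanning tree rel $F$. The proposition therefore reduces to the analogous statement for ordinary spanning trees of a weighted multigraph, and I would work in that setting.

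The ($\Leftarrow$) direction is immediate: summing the indexwise inequalities yields $\sum_{e\in T} w(e) \leq \sum_{e\in T'} w(e)$, so $T$ has minimum total weight. For the ($\Rightarrow$) direction, I would invoke the \emph{symmetric basis exchange property} of graphic matroids: for any two spanning trees $T$ and $T'$ of a connected multigraph there exists a bijection $\phi:T\setminus T' \to T'\setminus T$ such that $(T\setminus\{e\})\cup\{\phi(e)\}$ is again a spanning tree for every $e\in T\setminus T'$. Assuming $T$ is a minimum spanning tree, each such single swap cannot strictly decrease the total weight, so $w(e) \leq w(\phi(e))$ for all $e\in T\setminus T'$. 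Extending $\phi$ by the identity on $T\cap T'$ gives a bijection between the multisets of edge weights of $T$ and of $T'$ in which each weight of $T$ is paired with a weight of $T'$ that is at least as large.

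The final step is a short sorting lemma: if two finite sequences $(a_j)$ and $(b_j)$ of equal length admit a permutation $\sigma$ with $a_j \leq b_{\sigma(j)}$ for all $j$, then the $i$-th largest element of $(a_j)$ is at most the $i$-th largest of $(b_j)$ for every $i$. The reason is that at least $i$ of the $a_j$ equal or exceed the $i$-th largest of $(a_j)$, and their $\sigma$-images in $(b_j)$ are each at least as large, so at least $i$ of the $b_j$ equal or exceed that threshold. Applying the lemma to the edge-weight sequences of $T$ and $T'$ yields the claimed indexwise inequality.

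I expect the main obstacle to be the symmetric basis exchange property itself: the familiar ``one-sided'' exchange (removing $e \in T\setminus T'$ disconnects $T$ into two components, and some edge of $T'\setminus T$ crosses the resulting cut) easily gives the existence of \emph{some} weight-non-decreasing swap, but producing a coherent bijection $\phi$ requires a Hall-type matching argument. A cleaner alternative that avoids invoking this theorem is to mimic the correctness proof of Kruskal's algorithm: process the edges of $G/F$ in nondecreasing weight order and greedily build a spanning tree; a direct cut-exchange argument shows that every minimum spanning tree must agree, in sorted weight sequence, with the greedy output, and this sequence is visibly indexwise minimal among all spanning trees. I would pick whichever route best matches the level of matroid machinery used elsewhere in the paper.
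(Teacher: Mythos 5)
Your proposal is correct and takes essentially the same route as the paper: the paper offers no proof beyond the remark that collapsing each tree of $F$ to a point reduces the problem to ordinary minimum spanning trees on a multigraph, after which it appeals to ``standard facts'' --- exactly the reduction in your first paragraph. The rest of your argument (the summation for the easy direction, and either the Brualdi-type symmetric basis exchange combined with your sorting lemma, or the Kruskal-style greedy alternative) is a sound filling-in of the classical indexwise-minimality fact that the paper leaves implicit.
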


\begin{proof}
By the remarks above, it suffices to establish the result for ordinary spanning trees, i.e., the case where $F$ is the empty forest.  Let $T$ be a minimum spanning tree, and let $U$ be any other spanning tree.  
To arrive at a contradiction, assume that for some $i$, the $i^{\mathrm{th}}$ smallest edge weight in $T$ is greater than the $i^{\mathrm{th}}$ smallest edge weight in $U$.  
Let $w$ denote the latter weight.  Consider the subforests $T_w\subset T$ and $U_w\subset U$ consisting of all vertices and just those edges of weight at most $w$.  $U_w$ contains more edges than $T_w$, so there exists a pair of vertices $(u,v)$ that lie in the same component of $U_w$ but not in the same component of $T_w$.  
In fact, there must exist some edge $e=(x,y)$ along the path from $u$ to $v$ in $U_w$ such that $x$ and $y$ lie in different components of $T_w$.  Clearly, the path from $x$ to $y$ in $T$ must contain at least one edge $e'$ with weight greater than $w$.  Replacing $e'$ with $e$ in $T$ gives a new spanning tree with strictly smaller weight than $T$, contradicting that $T$ is a minimum spanning tree.
\end{proof}

\begin{remark}\label{Rem:Uniqueness_Of_Rel_Spanning_Tree}
It follows from \cref{Prop: Rel_Spanning_Tree_Indexwise_Min} that the collection of edge weights in a relative minimum spanning tree is independent of the choice of the tree.  
\end{remark}

\begin{definition}[Topological Novelty Profile]\label{Def:Top_Novelty_Profile}
For $\eh$ a history indexed by a phylogenetic graph $G$, let $\forest$ be the forest in $G$ obtained by removing all edges pointing to recombinants.  Let $\bar G$ denote the complete graph with same vertex set as $G$.  Regard $\bar G$ as a weighted graph by taking the weight of edge $(u,v)$ to be $d(\eh_u,\eh_v)$. 

Let $T$ be a minimum spanning tree of $\bar G$ rel $\forest$.  We define $\mathcal T(\eh)$, the \emph{topological novelty profile} of $\eh$, to be the list of distances \[\{d(\eh_u,\eh_v)\mid (u,v)\in T\setminus \forest\},\] counted with multiplicity and sorted in descending order.  By \cref{Rem:Uniqueness_Of_Rel_Spanning_Tree}, $\mathcal T(\eh)$ does not depend on the choice of $T$.
\end{definition}

We will observe in \cref{Sec:Stabilty_Top_Nov_Profile} that the topological novelty profile has an interpretation in terms of persistent homology.

Given two lists of numbers $A$ and $B$, each sorted in decreasing order, we write $A\leq B$ if $|A|\leq |B|$ and for each $i\in \{1,\ldots, |A|\}$, $A_i\leq B_i$.

\begin{proposition}
For any history $\eh$ with time function $t$, \[\mathcal T(\eh)\leq \mathcal N(\eh,t).\]  That is, the topological novelty profile is a lower bound for the temporal novelty profile.
\end{proposition}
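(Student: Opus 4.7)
The plan is to exhibit a spanning tree $T'$ of $\bar G$ rel $\forest$ whose non-forest edges realize the temporal novelties one-by-one, and then to compare $T'$ against a minimum spanning tree using \cref{Prop: Rel_Spanning_Tree_Indexwise_Min}. Both profiles have length equal to $k$, the number of recombinants of $G$: for $\mathcal N(\eh,t)$ this holds by definition, and for $\mathcal T(\eh)$ it holds because $\forest$ has exactly $k+1$ connected components (one rooted at each recombinant and one at the root of $G$), so any spanning tree rel $\forest$ has exactly $k$ edges outside $\forest$.

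Order the recombinants $r_1,\ldots,r_k$ so that $t(r_1)\leq \cdots \leq t(r_k)$, and for each $i$ choose a vertex $v_{r_i}$ with $t(v_{r_i}) < t(r_i)$ attaining the minimum $\mathcal N(r_i)$; such a vertex exists because the parents of $r_i$ are candidates. Let
\[
T' \;:=\; \forest \,\cup\, \{(v_{r_i}, r_i) : 1 \leq i \leq k\}.
\]
The main step is to verify that $T'$ is a spanning tree rel $\forest$; since $T'$ has $|V|-1$ edges and contains $\forest$, it suffices to show acyclicity. I would do this by induction on $i$: let $S_{r_i}$ denote the connected component of $r_i$ in $\forest$. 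Every vertex of $S_{r_i}$ other than $r_i$ is a clone-descendant of $r_i$ in $G$, and so has time strictly greater than $t(r_i)$ by the time-function axiom. Each previously added edge $(v_{r_j}, r_j)$ with $j < i$ has both endpoints of time strictly less than $t(r_i)$, so neither endpoint lies in $S_{r_i}$; inductively, $S_{r_i}$ remains an intact component of the intermediate graph just before step $i$. Since $v_{r_i}$ has $t(v_{r_i}) < t(r_i)$, it lies outside $S_{r_i}$, so adding $(v_{r_i},r_i)$ joins two distinct components rather than creating a cycle.

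The multiset of edge weights of $T'\setminus \forest$ is $\{\mathcal N(r_i)\}_{i=1}^{k}$, whose decreasing rearrangement is exactly $\mathcal N(\eh,t)$. To conclude, let $T$ be a minimum spanning tree rel $\forest$ and apply \cref{Prop: Rel_Spanning_Tree_Indexwise_Min}. Because $T$ and $T'$ share all edges of $\forest$, the indexwise comparison afforded by the proposition descends to the edges outside $\forest$ --- equivalently, contracting each component of $\forest$ to a point reduces the situation to the standard indexwise minimality of ordinary MSTs on the resulting weighted multigraph --- giving $\mathcal T(\eh)\leq \mathcal N(\eh,t)$. The main obstacle is the acyclicity step, where the time-function hypothesis is used in an essential way; the remaining steps are largely bookkeeping.
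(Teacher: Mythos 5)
Your proof is correct and takes essentially the same route as the paper: the paper's proof constructs exactly this spanning tree (adding one edge $(v(r),r)$ per recombinant to $\forest$, with $v(r)$ a nearest earlier-time vertex) and concludes via \cref{Prop: Rel_Spanning_Tree_Indexwise_Min}, leaving the acyclicity verification as ``easy to check,'' which you carry out in full via the time-ordered induction. One trivial imprecision: when $t(r_j)=t(r_i)$ for $j<i$, the endpoint $r_j$ has time equal to, not strictly less than, $t(r_i)$; but your conclusion survives, since both endpoints of $(v_{r_j},r_j)$ have time at most $t(r_i)$ and are distinct from $r_i$, while every other vertex of $S_{r_i}$ has time strictly greater than $t(r_i)$.
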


\begin{proof}
Suppose $\eh$ is indexed by $G$.  We construct a spanning tree $T$ of $\bar G$ rel $\forest$ such the weights of edges in $T\setminus \forest$ correspond to the temporal novelty profile.  The result then follows from \cref{Prop: Rel_Spanning_Tree_Indexwise_Min}.

To construct $T$, for each recombinant $r\in G$, choose a vertex $v(r)$ in $G$ with $t(v(r))<t(r)$, such that $d(\eh_{v(r)},\eh_r)$ is as small as possible among all such vertices.  We take $T$ to be the graph obtained from $F^G$ by adding in the edge 
$(v(r),r)$ for each recombinant $r$.  It is easy to check that $T$ is in fact a tree.
\end{proof}

\begin{example}
For the histories of \cref{Fig:Novelty_Ex1} and \cref{Fig:Novelty_Ex2}, the topological novelty profile is equal to the temporal one for all time functions.  For the history and time function of  \cref{Fig:Novelty_Ex3}, the topological and temporal novelty profiles are also equal, but one can select a different time function so that the two novelty profiles are not equal.
\end{example}

\begin{example}
\cref{Fig:Top_Vs_Temp_Novelty1} illustrates a history for which the temporal and topological novelty profiles are unequal for any choice of time function.  The topological novelty profile is (1,1), whereas the temporal novelty profile is always (2,1).  This example is degenerate, in the sense that the same genome (the empty one) appears at multiple vertices; \cref{Fig:Top_Vs_Temp_Novelty2} shows a variant of the example without this degeneracy.

\begin{figure}[h]
\begin{center}
\begin{subfigure}[position=b]{.40\linewidth}
\centering
\begin{center}
\begin{tikzpicture}[thick,scale=.75,->,
                   shorten >=2pt+0.5*\pgflinewidth,
                   shorten <=2pt+0.5*\pgflinewidth] 

       \coordinate (b) at (-1,-1);
          \coordinate (bA) at (-1,-.9);
       \coordinate (c) at (1,-1);
        \coordinate (d) at (-2,-2);
        \coordinate (e) at (0,-2);
         \coordinate (eRight) at (0,-2);
        \coordinate (g) at (1,-3);
         \coordinate (h) at (-1,-3);
          \coordinate (i) at (0,-4);
           \coordinate (iBelow) at (0,-4.1);
          \node[above] at (bA) {$\{\}$};          
          \node[above left] at (d) {$\{\au,\bu,\cu,\du\}$};      
            \node[above right] at (e) {$\{\}$};    
              \node[above right] at (g) {$\{\}$};
                \node[below left] at (h) {$\{\au,\bu\}$};        
                \node[below]  at (iBelow) {$\{\au\}$};
                                                
                                                \tikzset{
    every node/.style={
        circle,
        draw,
        fill          = gray!50,
        inner sep     = 0pt,
        minimum width =4 pt
    }   
}
          
\path[draw] 

       
       node[fill=white] at  (b) {}
       node at  (d) {} 
       node at  (e) {} 
        node at  (g) {}
         node[fill=black] at  (h) {}
         node[fill=black] at  (i) {};
           

    \draw (b) -- (d);
    \draw (b) -- (e);
   \draw (e) -- (g);
    \draw (g) -- (i);
    \draw (d) -- (h);
          \draw (e) -- (h);
            \draw (h) -- (i);          

\end{tikzpicture}
\end{center}
\caption{A history for which the topological and temporal novelty profiles are different for any time function.}
\label{Fig:Top_Vs_Temp_Novelty1}
\end{subfigure}
\hskip30pt
\begin{subfigure}[position=b]{.40\linewidth}
\centering
\begin{center}
\begin{tikzpicture}[thick,scale=.75,->,
                   shorten >=2pt+0.5*\pgflinewidth,
                   shorten <=2pt+0.5*\pgflinewidth] 

       \coordinate (b) at (-1,-1);
          \coordinate (bA) at (-1,-.9);
       \coordinate (c) at (1,-1);
        \coordinate (d) at (-2,-2);
        \coordinate (e) at (0,-2);
         \coordinate (eRight) at (0,-2);
        \coordinate (g) at (1,-3);
         \coordinate (h) at (-1,-3);
          \coordinate (i) at (0,-4);
           \coordinate (iBelow) at (0,-4.1);
          \node[above] at (bA) {$\{\}$};          
          \node[above left] at (d) {$\{\Au,\Bu,\Cu,\Du\}$};      
            \node[above right] at (e) {$\{\eu\}$};    
              \node[above right] at (g) {$\{\eu,\fu\}$};
                \node[below left] at (h) {$\{\Au,\Bu\}$};        
                \node[below]  at (iBelow) {$\{\Au\}$};
                                                
                                                \tikzset{
    every node/.style={
        circle,
        draw,
        fill          = gray!50,
        inner sep     = 0pt,
        minimum width =4 pt
    }   
}
          
\path[draw] 

       
       node[fill=white] at  (b) {}
       node at  (d) {} 
       node at  (e) {} 
        node at  (g) {}
         node[fill=black] at  (h) {}
         node[fill=black] at  (i) {};
           

    \draw (b) -- (d);
    \draw (b) -- (e);
   \draw (e) -- (g);
    \draw (g) -- (i);
    \draw (d) -- (h);
          \draw (e) -- (h);
            \draw (h) -- (i);          

\end{tikzpicture}
\end{center}
\caption{A variant of example (a) with no duplicate genomes.  Each boldface letter represents three mutations, while each letter in plain font represents a single mutation. }
\label{Fig:Top_Vs_Temp_Novelty2}
\end{subfigure}
\caption{}
\label{Fig:Top_Vs_Temp_Novelty}
\end{center}
\end{figure}
\end{example}

Like temporal novelty profiles, topological novelty profiles are stable with respect to perturbations of the genomic data; we show this in \cref{Prop:Topological_Novelty_Stability}.  \cref{Equality_Of_Nov_Profiles_For _Galled_Trees} below tells us that when $G$ is a galled tree, the temporal and topological novelty profiles are in fact equal. 

\section{Histories Indexed By Galled Trees}\label{Sec:Galled_Trees}
Our main bounds on novelty profiles concern the special case that our phylogenetic graphs are galled trees.

The definition of galled tree we give is equivalent to the one given in \cite[Definition 6.11.1]{huson2010phylogenetic}.  As noted in \cite{huson2010phylogenetic}, this is slightly more general than the original definition \cite{gusfield2003efficient,wang2001perfect}, which requires the cycles in a galled tree to be node-disjoint.

\begin{definition}[Source-Sink Loop]
We say an undirected graph is a \emph{loop} if its geometric realization is homeomorphic to a circle.  We call a directed graph $G$ a \emph{source-sink loop} if 
\begin{enumerate}
\item[1.] The undirected graph underlying $G$ is a loop.
\item[2.] $G$ has a unique source and unique sink.
\end{enumerate}
\end{definition}

\begin{definition}[Sum of Directed Graphs]
For directed graphs $G$ and $H$, with $v$ a source in $G$ and $w$ any vertex in $H$, we define a directed graph $G\vee_{v,w} H$ by taking the disjoint union of $G$ and $H$ and then identifying $v$ and $w$ (i.e.,``gluing" $v$ to $w$).  We call $G\vee_{v,w} H$ a \emph{sum} of $G$ and $H$.  (We do \emph{not} define the sum $G\vee_{v,w} H$ in the case that neither of the vertices $v$ or $w$ is a source.)
We will sometimes write $G\vee_{v,w} H$ simply as $G\vee H$, suppressing $v$ and $w$.
\end{definition}  

\begin{definition}[Galled Tree]
Let $\A$ be the smallest collection of directed acyclic graphs such that:
\begin{enumerate}
\item[1.] Each rooted tree is in $\A$.
\item[2.] Each source-sink loop is in $\A$.
\item[3.] if $G$ and $H$ are in $\A$, then so is each sum $G\vee H$.
\end{enumerate}
We define a \emph{galled tree} to be a graph isomorphic to one in $\A$.
Thus, informally, a galled tree is a graph obtained by iteratively gluing rooted trees and source-sink loops along single vertices, using the sum operation specified above. 
\end{definition}

We omit the easy proof of the following:
\begin{proposition}
Any galled tree is a phylogenetic graph.
\end{proposition}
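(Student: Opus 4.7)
The plan is a straightforward structural induction on the construction of the class $\A$, verifying that the three defining properties of a phylogenetic graph (being a finite DAG with a unique vertex of in-degree $0$ and all in-degrees at most $2$) are preserved by each constructor. The base cases are immediate: rooted trees are phylogenetic graphs by definition, and for a directed loop, the unique source has in-degree $0$, the unique sink has in-degree $2$, and every other vertex lies on exactly one of the two directed paths from source to sink, giving it in-degree $1$.

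For the inductive step, I would assume $G,H\in\A$ are phylogenetic graphs and form $G\vee_{v,w} H$, where $v$ is a source of $G$ (hence necessarily the unique root of $G$, since phylogenetic graphs have a unique in-degree-$0$ vertex) and $w$ is any vertex of $H$. First I would verify acyclicity: any directed cycle in $G\vee H$ lying entirely within $G$ or within $H$ would contradict the inductive hypothesis, and any cycle using edges from both sides must pass through the glued vertex. Since a directed cycle visits each vertex at most once, such a cycle would split at the glued vertex into a directed path lying in $G$ and a directed path lying in $H$, each starting and ending at the glued vertex, again contradicting acyclicity of $G$ or $H$. Because $v$ has in-degree $0$ in $G$, the merged vertex in $G\vee H$ has in-degree equal to that of $w$ in $H$, which is at most $2$; every other vertex retains its original in-degree, which is also at most $2$.

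For uniqueness of the in-degree-$0$ vertex, I would split into cases. If $w$ is the root of $H$, then the merged vertex has in-degree $0$, and it is the unique such vertex since every other vertex inherits a positive in-degree from either $G$ or $H$. If $w$ is not the root of $H$, then the merged vertex has in-degree $\geq 1$, while the root of $H$ persists as the unique in-degree-$0$ vertex of $G\vee H$. Finiteness is preserved trivially. Induction then yields the conclusion for every graph in $\A$, hence for every galled tree by the isomorphism clause.

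There is no real obstacle here; the only step requiring a moment of care is the acyclicity verification, but the hypothesis that $v$ is a source of $G$ (so that the merged vertex receives no incoming edges from the $G$-side) makes even this essentially automatic.
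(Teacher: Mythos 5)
Your proof is correct; note that the paper offers no proof at all here---it states ``We omit the easy proof of the following''---so your structural induction on the generating clauses of $\A$ (base cases: rooted trees and directed loops; inductive step: the sum $G\vee_{v,w}H$, using that the glued vertex receives no incoming edges from the $G$-side) is exactly the routine argument the authors are alluding to, worked out in full. The only point worth flagging is that the paper's parenthetical in the definition of the sum suggests the gluing is also permitted when $w$ is a source of $H$ and $v$ is an arbitrary vertex of $G$; since identifying $v$ with $w$ is a symmetric operation, that case is the graph $H\vee_{w,v}G$ and is covered by your argument after swapping the roles of $G$ and $H$, so no substantive gap remains.
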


Note that the recombinants in a galled tree $G$ are in bijective correspondence with the source-sink loops in $G$.

\cref{Fig:Galled_Tree} gives an example of a galled tree.  It can be checked that the phylogenetic graph of \cref{Fig:Phylo_Graph} is not a galled tree.

\begin{figure}[h]
\begin{center}
\begin{tikzpicture}[thick,scale=.8,->,
                   shorten >=2pt+0.5*\pgflinewidth,
                   shorten <=2pt+0.5*\pgflinewidth,
                   every node/.style={circle,
                                      draw,
                                      fill          = black!50,
                                      inner sep     = 0pt,
                                      minimum width =4 pt}]
                                      \tikzset{
    every node/.style={
        circle,
        draw,
        fill          = gray!50,
        inner sep     = 0pt,
        minimum width =4 pt
    }   
}  

       \coordinate (a) at (0,0);
       \coordinate (b) at (-1,-1);
       \coordinate (c) at (1,-1);
        \coordinate (d) at (-2,-2);
         \coordinate (de) at (-1,-2);
        \coordinate (e) at (0,-2);
         \coordinate (ef) at (1,-2);
        \coordinate (f) at (2,-2);
        \coordinate (g) at (-3,-3);
         \coordinate (h) at (-1,-3);
          \coordinate (i) at (1,-3);
           \coordinate (j) at (3,-3);
            \coordinate (l) at (-2,-4);
             \coordinate (m) at (0,-4);
              \coordinate (n) at (2,-4);
               \coordinate (o) at (4,-4);
           \coordinate (q) at (-3,-5);
            \coordinate (r) at (-1,-5);
             \coordinate (s) at (1,-5);
              \coordinate (t) at (3,-5);
          
\path[draw] 

       
       node at (a) {} 
       node at  (b) {}
       node at  (c) {} 
       node at  (d) {}
       node at  (de) {}  
       node at  (e) {}
        node at  (ef) {}  
       node at  (f) {}
        node at  (g) {}
         node at  (h) {}
          node at  (i) {}
           node at  (j) {}
           node[fill=black] at  (l) {}
           node at  (m) {}
           node[fill=black] at  (n) {}
           node at  (o) {}
           node at  (q) {}
           node at  (r) {}
           node at  (s) {};
           

    \draw[color=black] (a) -- (b) ;
    \draw[color=black] (a) -- (c);
    \draw[color=black] (b) -- (d);
    \draw[color=black] (b) -- (de);
    \draw[color=black] (c) -- (e);
    \draw[dashed] (c) -- (ef);
    \draw[dashed]  (c) -- (f);
    \draw[dashed]  (d) -- (g);
    \draw[dashed]  (d) -- (h);
      \draw[dashed]  (f) -- (j);
       \draw[dashed]  (ef) -- (i);
      \draw[dashed]  (g) -- (l);
      \draw[dashed]  (h) -- (l);
      \draw[color=black] (i) -- (m);
      \draw[dashed]  (i) -- (n);
      \draw[dashed]  (j) -- (n);
       \draw[color=black] (j) -- (o);
         \draw[color=black] (l) -- (q);
          \draw[color=black] (l) -- (r);
           \draw[color=black] (m) -- (s);

\end{tikzpicture}
\end{center}
\caption{A galled tree which can be constructed as the iterated sum of four rooted trees (solid edges) and two source-sink loops (dashed edges).  The two recombinants are shown in black.}
\label{Fig:Galled_Tree}
\end{figure}
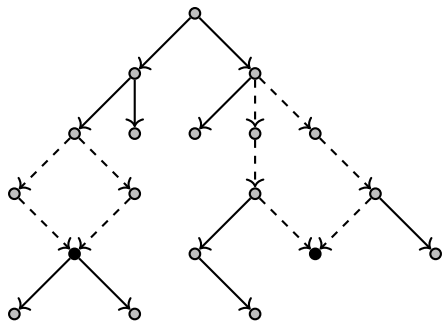

\begin{proposition}[Equality of Temporal and Topological Novelty Profiles on Galled Trees]\label{Equality_Of_Nov_Profiles_For _Galled_Trees}
For any history $\eh$ indexed by a galled tree and time function $t$, \[\mathcal N(\eh,t)=\mathcal T(\eh).\]
\end{proposition}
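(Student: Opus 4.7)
By the previous proposition, $\mathcal T(\eh) \leq \mathcal N(\eh, t)$; my plan is to prove the reverse inequality by showing that the spanning tree $T$ constructed in the previous proof---with edges $(v(r), r)$ outside $\forest$ of weights $\{\mathcal N(r)\}_r$---is in fact a minimum spanning tree of $\bar G$ rel $\forest$. Equivalently, I show that the MST weight multiset equals $\{\mathcal N(r)\}_r$, which then forces $\mathcal T(\eh) = \mathcal N(\eh, t)$.

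The main idea is to exploit the assembly of $G$ as an iterated sum of rooted trees and directed loops. By the infinite sites assumption, mutations originating in distinct pieces of the sum construction are disjoint, so whenever $x, y$ lie in different pieces joined only at a single vertex $p$, we have $d(\eh_x, \eh_y) = d(\eh_x, \eh_p) + d(\eh_p, \eh_y)$. Thus $\met{\eh}$ carries the structure of an iterated wedge (coproduct of based metric spaces) whose pieces are the symmetric-difference metric spaces of the rooted trees and loops used to build $G$. A standard swap argument then shows that every minimum spanning tree rel $\forest$ decomposes across each such wedge: if a spanning tree rel $\forest$ contains a cross-wedge edge $(x,y)$ with $x \in P\setminus\{p\}$ and $y \in Q\setminus\{p\}$, removing it and adding either $(p,x)$ or $(p,y)$ (whichever restores connectivity) yields a spanning tree rel $\forest$ of strictly smaller weight, using the identity $d(\eh_x,\eh_y) = d(\eh_x,\eh_p) + d(\eh_p,\eh_y)$.

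Iterating this decomposition, the MST of $\bar G$ rel $\forest$ is the union of MSTs rel $\forest$ on each piece. On a rooted tree piece, $\forest$ already spans, so no edges are added. On a gall (loop) piece $L_r$, $\forest$ removes the two edges pointing to $r$, leaving exactly two components: $\{r\}$ and the path formed by the remaining vertices of $L_r$. The MST therefore adds a single edge, the minimum-weight edge from $r$ to $L_r \setminus \{r\}$. I claim this minimum equals $\mathcal N(r)$. For any vertex $b \notin L_r$, the same wedge decomposition produces a gateway $v \in L_r$ with $d(\eh_b, \eh_r) = d(\eh_b, \eh_v) + d(\eh_v, \eh_r) \geq d(\eh_v, \eh_r)$, so $\min_{b \neq r} d(\eh_b, \eh_r) = \min_{w \in L_r \setminus \{r\}} d(\eh_w, \eh_r)$. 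Since every vertex of $L_r \setminus \{r\}$ is a strict ancestor of $r$ in $G$, it satisfies $t(\cdot) < t(r)$ by monotonicity of $t$, and this common minimum is exactly $\mathcal N(r)$.

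Assembling the pieces, the MST weight multiset equals $\{\mathcal N(r)\}_{r\text{ a recombinant}}$, yielding $\mathcal T(\eh) = \mathcal N(\eh, t)$. The main obstacle is the bookkeeping: rigorously setting up the iterated wedge structure of $\met{\eh}$ and verifying that the restriction of $\forest$ to each piece is precisely the sub-forest obtained by deleting that piece's own edges-to-recombinants. Once this is in place, both the wedge swap argument and the identification of the per-gall minimum with $\mathcal N(r)$ follow routinely from the additive metric relation $d(\eh_x,\eh_y) = d(\eh_x,\eh_p) + d(\eh_p,\eh_y)$ across each wedge point.
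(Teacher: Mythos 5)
Your overall route---decomposing the relative minimum spanning tree across the wedge structure of $\met{\eh}$ and identifying one extra edge per gall---is sound, and is a fleshed-out version of what the paper does more tersely (the paper runs the greedy MST construction and asserts the same key metric fact directly). However, the step identifying the per-gall minimum with $\mathcal N(r)$ contains a genuine error. You claim that for any $b\notin L_r$ the wedge decomposition yields a gateway $v\in L_r$ with $d(\eh_b,\eh_r)=d(\eh_b,\eh_v)+d(\eh_v,\eh_r)$, and conclude $\min_{b\neq r}d(\eh_b,\eh_r)=\min_{w\in L_r\setminus\{r\}}d(\eh_w,\eh_r)$. But the gateway can be $r$ itself: any piece glued to the loop at $r$ (for instance, the subtree of descendants of $r$) has all of its distances to $r$ factoring through $v=r$, where your inequality degenerates to $d(\eh_b,\eh_r)\geq 0$. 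The asserted equality of minima is then simply false: a clone child $b$ of $r$ with $\eh_b=\eh_r\cup\{x\}$ has $d(\eh_b,\eh_r)=1$, while $\min_{w\in L_r\setminus\{r\}}d(\eh_w,\eh_r)$ can be arbitrarily large (take the history of \cref{Fig:Novelty_Ex2} and append such a child to the recombinant). Since $\mathcal N(r)$ is a minimum over the intermediate set $\{v\mid t(v)<t(r)\}$, your sandwich between the two outer minima collapses.

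The fix is to restrict attention from the outset to vertices $b$ with $t(b)<t(r)$, which is all that $\mathcal N(r)$ involves. Such a $b$ cannot be a descendant of $r$; and every vertex lying in a piece attached to $L_r$ at $r$ \emph{is} a descendant of $r$, because $r$ is then a cut vertex separating that piece from the root, so every directed path from the root (the unique in-degree-0 vertex) to $b$ must pass through $r$. Hence for $b\notin L_r$ with $t(b)<t(r)$, the gateway lies in $L_r\setminus\{r\}$, your additive inequality applies, and together with your (correct) observation that every $w\in L_r\setminus\{r\}$ is a strict ancestor of $r$ with $t(w)<t(r)$, you obtain $\mathcal N(r)=\min_{w\in L_r\setminus\{r\}}d(\eh_w,\eh_r)$; after that your MST bookkeeping goes through. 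One cosmetic point: in the swap argument, the replacement edge's weight is only $\leq$ (not $<$) that of the cross-wedge edge when duplicate genomes make one leg zero, so you should instead argue that each swap weakly decreases weight while strictly decreasing the number of cross-wedge edges. The corrected key claim---that the vertex nearest $r$ among those born earlier can be taken on $r$'s own loop---is precisely the central assertion of the paper's one-paragraph proof.
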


\begin{proof}
Suppose $\eh$ is indexed by the galled tree $G$.  We use the notation from \cref{Def:Top_Novelty_Profile}.  As in the case of ordinary minimum spanning trees, a minimum spanning tree of $\bar G$ rel $F$ can be constructed greedily, by considering the edges of $\bar G\setminus F$ in order of increasing weight.  In this construction, each edge of $\bar G\setminus F$ added to the relative minimum spanning tree can  be chosen to connect a recombinant $r$ to a vertex $v$ of the source-sink loop in $G$ that has $r$ as its sink.  We then have that $t(v) < t(r)$ and $d(\eh_v,\eh_r)\leq d(\eh_w,\eh_r)$ for any other vertex $w$ with $t(w)<t(r)$.  Clearly, in this construction we never take $r$ to be the same recombinant more than once.  The result follows.
\end{proof}

\begin{remark}[Galled Trees as Models for Evolution in the Low-Recombination Limit]\label{Rem:Galled_Low_Recomb}
Given a probabilistic model generating a phylogenetic graph, one may ask what the probability is of obtaining a galled tree.  This problem has previously been studied by simulation in \cite{Arenas:2008hb}, for a coalescent model of evolution.  In \cref{Sec:Appendix}, we study the same problem analytically.  We show that the problem reduces to the study of a finite-state Markov chain.  A simple analysis of this Markov chain yields, for fixed population size $n$, a system of linear equations $L(\rho)$ depending on a recombination rate parameter $\rho$, whose solution gives the probability $P(n,\rho)$ of obtaining a galled tree.  Solving these linear systems numerically for various values of $\rho$ and $n$, we observe that as $\rho$ tends to 0, $P(n,\rho)$ tends to 1.  

This indicates that histories indexed by galled trees are biologically reasonable models of evolution in low-recombination settings.  While, from a biological standpoint, the specific bounds on $\rho$ needed to obtain a galled tree with high probability are rather stringent in general, we do expect these bounds to hold in some settings of interest; see \cref{Sec:Discussion} for further discussion of this.  

Regardless, from a mathematical perspective, the special case of galled trees seems to be a natural place to begin fleshing out theoretical foundations for the topological study of evolution.
\end{remark}

\section{Topological Preliminaries}\label{Sec:Topological_Preliminaries}
In this section, we briefly review persistent homology and the related topological definitions and results we will need in the remainder of the paper.  As a first application, we observe that the topological novelty profile admits a description in terms of persistent homology, and is therefore stable.  
We also briefly review some ideas from discrete Morse theory.  

We assume that the reader is familiar with some standard concepts from elementary algebraic topology, including simplicial complexes, homology, and homotopy equivalence.  Good introductions can be found in many places, e.g., \cite{hatcher2002algebraic,munkres1984elements}. 

\subsection{Persistent Homology}\label{Sec:Persistent_Homology}
Our treatment of persistent homology will be terse; for a more thorough introduction to these ideas, including a discussion of some of the many applications of persistent homology to data analysis, see the surveys and textbooks \cite{carlsson2014topological,carlsson2009topology,edelsbrunner2010computational,oudot2015persistence}. 

\bparagraph{Filtrations} A \emph{filtration} is a collection of topological spaces $\{\F_r\}_{r\in [0,\infty)}$ such that $\F_r\subseteq \F_s$ whenever $r\leq s$.  A morphism $f:\F\to \G$ of filtrations is a collection of continuous maps $\{f_r:\F_r\to \G_r\}_{r\in [0,\infty)}$ such that the following diagram commutes for all $r\leq s$: 
\[
\begin{tikzcd}
\F_r\ar[hookrightarrow]{r}\ar["f_r",swap]{d} &\F_s\ar["f_s"]{d} \\
\G_r\ar[hookrightarrow]{r} &\G_s
\end{tikzcd}
\]
We say $f$ is an \emph{objectwise homotopy equivalence} if each $f_r$ is a homotopy equivalence.  Intuitively, if two filtrations are connected by an objectwise homotopy equivalence, we should think of them as topologically equivalent; for further discussion of this point in the context of topological data analysis, see \cite{blumberg2017universality}.

\bparagraph{Vietoris--Rips Filtrations}
For $S$ a simplicial complex, we use square brackets to denote simplices of $S$.  Thus, for example, the set of simplices of a triangle with vertex set $\{a,b,c\}$ is $\{[a],[b],[c],[ab],[bc],[ac],[abc]\}$.

For $P$ a finite metric space and $r\in [0,\infty)$, the \emph{Vietoris--Rips complex of $P$} with scale parameter $r$, denoted $\Rips(P)_r$, is the simplicial complex with vertices $P$ that contains simplex $[p_1,p_2,\ldots,p_n]$ if and only if $\diam\{p_1,p_2,\ldots,p_n\}\leq 2r$.   
If $r\leq s$, then $\Rips(P)_r\subseteq \Rips(P)_s$, so $\Rips(P):=\{\Rips(P)_r\}_{r\in [0,\infty)}$ is a filtration; see \cref{Fig:RipsEx}.
\definecolor{aqaqaq}{rgb}{0.6274509803921569,0.6274509803921569,0.6274509803921569}
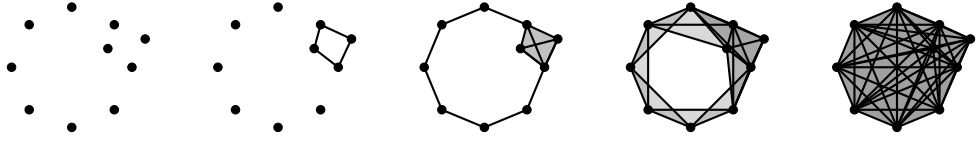
\begin{figure}
\begin{center}
\begin{tikzpicture}[scale=.8]
\coordinate (a) at (1,0);
\coordinate (b) at (0,1);
\coordinate (c) at (-1,0);
\coordinate (d) at (0,-1);
\coordinate (e) at (.707,.707);
\coordinate (f) at (-.707,.707);
\coordinate (g) at (-.707,-.707);
\coordinate (h) at (.707,-.707);
\coordinate (i) at  (1.22,.47);
\coordinate (j) at (.6,.31);
\coordinate (k) at (.6,.9);

\draw [fill] (a) circle [radius=0.07];
\draw [fill] (b) circle [radius=0.07];
\draw [fill] (c) circle [radius=0.07];
\draw [fill] (d) circle [radius=0.07];
\draw [fill] (e) circle [radius=0.07];
\draw [fill] (f) circle [radius=0.07];
\draw [fill] (g) circle [radius=0.07];
\draw [fill] (h) circle [radius=0.07];
\draw [fill] (i) circle [radius=0.07];
\draw [fill] (j) circle [radius=0.07];
\end{tikzpicture}
\hskip20pt
\begin{tikzpicture}[scale=.8]
\coordinate (a) at (1,0);
\coordinate (b) at (0,1);
\coordinate (c) at (-1,0);
\coordinate (d) at (0,-1);
\coordinate (e) at (.707,.707);
\coordinate (f) at (-.707,.707);
\coordinate (g) at (-.707,-.707);
\coordinate (h) at (.707,-.707);
\coordinate (i) at  (1.22,.47);
\coordinate (j) at (.6,.31);
\coordinate (k) at (.6,.9);

\draw[thick] (a) -- (i);
\draw[thick] (a) -- (j);
\draw[thick] (e) -- (i);
\draw[thick] (e) -- (j);

\draw [fill] (a) circle [radius=0.07];
\draw [fill] (b) circle [radius=0.07];
\draw [fill] (c) circle [radius=0.07];
\draw [fill] (d) circle [radius=0.07];
\draw [fill] (e) circle [radius=0.07];
\draw [fill] (f) circle [radius=0.07];
\draw [fill] (g) circle [radius=0.07];
\draw [fill] (h) circle [radius=0.07];
\draw [fill] (i) circle [radius=0.07];
\draw [fill] (j) circle [radius=0.07];
\end{tikzpicture}
\hskip20pt
\begin{tikzpicture}[scale=.8]
\coordinate (a) at (1,0);
\coordinate (b) at (0,1);
\coordinate (c) at (-1,0);
\coordinate (d) at (0,-1);
\coordinate (e) at (.707,.707);
\coordinate (f) at (-.707,.707);
\coordinate (g) at (-.707,-.707);
\coordinate (h) at (.707,-.707);
\coordinate (i) at  (1.22,.47);
\coordinate (j) at (.6,.31);
\coordinate (k) at (.6,.9);

\fill[color=aqaqaq, fill=aqaqaq, fill opacity=0.4] (a) -- (e) -- (i) -- cycle;
\fill[color=aqaqaq, fill=aqaqaq, fill opacity=0.4] (a) -- (e) -- (j) -- cycle;
\fill[color=aqaqaq, fill=aqaqaq, fill opacity=0.4] (a) -- (i) -- (j) -- cycle;
\fill[color=aqaqaq, fill=aqaqaq, fill opacity=0.4] (e) -- (i) -- (j) -- cycle;

\draw[thick] (a) -- (i);
\draw[thick] (a) -- (j);
\draw[thick] (e) -- (i);
\draw[thick] (e) -- (j);
\draw[thick] (a) -- (e);
\draw[thick] (i) -- (j);
\draw[thick] (b) -- (e);
\draw[thick] (b) -- (f);
\draw[thick] (c) -- (f);
\draw[thick] (c) -- (g);
\draw[thick] (d) -- (g);
\draw[thick] (d) -- (h);
\draw[thick] (a) -- (h);
\draw[thick] (a) -- (i);
\draw[thick] (a) -- (j);
\draw [fill] (a) circle [radius=0.07];
\draw [fill] (b) circle [radius=0.07];
\draw [fill] (c) circle [radius=0.07];
\draw [fill] (d) circle [radius=0.07];
\draw [fill] (e) circle [radius=0.07];
\draw [fill] (f) circle [radius=0.07];
\draw [fill] (g) circle [radius=0.07];
\draw [fill] (h) circle [radius=0.07];
\draw [fill] (i) circle [radius=0.07];
\draw [fill] (j) circle [radius=0.07];
\end{tikzpicture}
\hskip20pt
\begin{tikzpicture}[scale=.8]
%
\coordinate (a) at (1,0);
\coordinate (b) at (0,1);
\coordinate (c) at (-1,0);
\coordinate (d) at (0,-1);
\coordinate (e) at (.707,.707);
\coordinate (f) at (-.707,.707);
\coordinate (g) at (-.707,-.707);
\coordinate (h) at (.707,-.707);
\coordinate (i) at  (1.22,.47);
\coordinate (j) at (.6,.31);
\coordinate (k) at (.6,.9);

\fill[color=aqaqaq, fill=aqaqaq, fill opacity=0.4] (a) -- (e) -- (i) -- cycle;
\fill[color=aqaqaq, fill=aqaqaq, fill opacity=0.4] (a) -- (e) -- (j) -- cycle;
\fill[color=aqaqaq, fill=aqaqaq, fill opacity=0.4] (a) -- (i) -- (j) -- cycle;
\fill[color=aqaqaq, fill=aqaqaq, fill opacity=0.4] (e) -- (i) -- (j) -- cycle;

\fill[color=aqaqaq, fill=aqaqaq, fill opacity=0.4] (a) -- (b) -- (e) -- cycle;
\fill[color=aqaqaq, fill=aqaqaq, fill opacity=0.4] (a) -- (b) -- (i) -- cycle;
\fill[color=aqaqaq, fill=aqaqaq, fill opacity=0.4] (a) -- (b) -- (j) -- cycle;
\fill[color=aqaqaq, fill=aqaqaq, fill opacity=0.4] (a) -- (e) -- (h) -- cycle;

\fill[color=aqaqaq, fill=aqaqaq, fill opacity=0.4] (e) -- (b) -- (f) -- cycle;
\fill[color=aqaqaq, fill=aqaqaq, fill opacity=0.4] (b) -- (f) -- (c) -- cycle;
\fill[color=aqaqaq, fill=aqaqaq, fill opacity=0.4] (f) -- (c) -- (g) -- cycle;
\fill[color=aqaqaq, fill=aqaqaq, fill opacity=0.4] (c) -- (g) -- (d) -- cycle;
\fill[color=aqaqaq, fill=aqaqaq, fill opacity=0.4] (g) -- (d) -- (h) -- cycle;
\fill[color=aqaqaq, fill=aqaqaq, fill opacity=0.4] (d) -- (h) -- (a) -- cycle;

\fill[color=aqaqaq, fill=aqaqaq, fill opacity=0.4] (b) -- (e) -- (i) -- cycle;
\fill[color=aqaqaq, fill=aqaqaq, fill opacity=0.4] (b) -- (e) -- (j) -- cycle;
\fill[color=aqaqaq, fill=aqaqaq, fill opacity=0.4] (b) -- (j) -- (i) -- cycle;

\fill[color=aqaqaq, fill=aqaqaq, fill opacity=0.4] (h) -- (e) -- (i) -- cycle;
\fill[color=aqaqaq, fill=aqaqaq, fill opacity=0.4] (h) -- (e) -- (j) -- cycle;
\fill[color=aqaqaq, fill=aqaqaq, fill opacity=0.4] (h) -- (i) -- (j) -- cycle;

\fill[color=aqaqaq, fill=aqaqaq, fill opacity=0.4] (e)-- (f) -- (j) -- cycle;
\draw[thick] (a) -- (i);
\draw[thick] (a) -- (j);
\draw[thick] (e) -- (i);
\draw[thick] (e) -- (j);
\draw[thick] (a) -- (e);
\draw[thick] (i) -- (j);
\draw[thick] (b) -- (e);
\draw[thick] (b) -- (f);
\draw[thick] (c) -- (f);
\draw[thick] (c) -- (g);
\draw[thick] (d) -- (g);
\draw[thick] (d) -- (h);
\draw[thick] (a) -- (h);
\draw[thick] (a) -- (i);
\draw[thick] (a) -- (j);
\draw[thick] (a) -- (b);
\draw[thick] (b) -- (c);
\draw[thick] (c) -- (d);
\draw[thick] (d) -- (a);
\draw[thick] (e) -- (f);
\draw[thick] (f) -- (g);
\draw[thick] (g) -- (h);
\draw[thick] (h) -- (a);
\draw[thick] (e) -- (h);
\draw[thick] (h) -- (i);
\draw[thick] (h) -- (j);
\draw[thick] (i) -- (b);
\draw[thick] (j) -- (b);
\draw[thick] (f) -- (j);

\draw [fill] (a) circle [radius=0.07];
\draw [fill] (b) circle [radius=0.07];
\draw [fill] (c) circle [radius=0.07];
\draw [fill] (d) circle [radius=0.07];
\draw [fill] (e) circle [radius=0.07];
\draw [fill] (f) circle [radius=0.07];
\draw [fill] (g) circle [radius=0.07];
\draw [fill] (h) circle [radius=0.07];
\draw [fill] (i) circle [radius=0.07];
\draw [fill] (j) circle [radius=0.07];
\end{tikzpicture}
\hskip20pt
\begin{tikzpicture}[scale=.8]
%
\coordinate (a) at (1,0);
\coordinate (b) at (0,1);
\coordinate (c) at (-1,0);
\coordinate (d) at (0,-1);
\coordinate (e) at (.707,.707);
\coordinate (f) at (-.707,.707);
\coordinate (g) at (-.707,-.707);
\coordinate (h) at (.707,-.707);
\coordinate (i) at  (1.22,.47);
\coordinate (j) at (.6,.31);
\coordinate (k) at (.6,.9);

\fill[color=aqaqaq, fill=aqaqaq, fill opacity=1] (i) -- (e) -- (b) -- (f) -- (c) -- (g) -- (d) -- (h) -- cycle;
\draw[thick] (a) -- (b);
\draw[thick] (a) -- (c);
\draw[thick] (a) -- (d);
\draw[thick] (a) -- (e);
\draw[thick] (a) -- (f);
\draw[thick] (a) -- (g);
\draw[thick] (a) -- (h);
\draw[thick] (a) -- (i);
\draw[thick] (a) -- (j);

\draw[thick] (b) -- (c);
\draw[thick] (b) -- (d);
\draw[thick] (b) -- (e);
\draw[thick] (b) -- (f);
\draw[thick] (b) -- (g);
\draw[thick] (b) -- (h);
\draw[thick] (b) -- (i);
\draw[thick] (b) -- (j);

\draw[thick] (c) -- (d);
\draw[thick] (c) -- (e);
\draw[thick] (c) -- (f);
\draw[thick] (c) -- (g);
\draw[thick] (c) -- (h);
\draw[thick] (c) -- (i);
\draw[thick] (c) -- (j);

\draw[thick] (d) -- (e);
\draw[thick] (d) -- (f);
\draw[thick] (d) -- (g);
\draw[thick] (d) -- (h);
\draw[thick] (d) -- (i);
\draw[thick] (d) -- (j);

\draw[thick] (e) -- (f);
\draw[thick] (e) -- (g);
\draw[thick] (e) -- (h);
\draw[thick] (e) -- (i);
\draw[thick] (e) -- (j);

\draw[thick] (f) -- (g);
\draw[thick] (f) -- (h);
\draw[thick] (f) -- (i);
\draw[thick] (f) -- (j);

\draw[thick] (g) -- (h);
\draw[thick] (g) -- (i);
\draw[thick] (g) -- (j);

\draw[thick] (h) -- (i);
\draw[thick] (h) -- (j);

\draw[thick] (i) -- (j);

\draw [fill] (a) circle [radius=0.07];
\draw [fill] (b) circle [radius=0.07];
\draw [fill] (c) circle [radius=0.07];
\draw [fill] (d) circle [radius=0.07];
\draw [fill] (e) circle [radius=0.07];
\draw [fill] (f) circle [radius=0.07];
\draw [fill] (g) circle [radius=0.07];
\draw [fill] (h) circle [radius=0.07];
\draw [fill] (i) circle [radius=0.07];
\draw [fill] (j) circle [radius=0.07];
\end{tikzpicture}
\end{center}
\caption{Rips complexes $\Rips(P)_r$ on a simple point cloud $P\subset\R^2$, for several choices of $r$.}\label{Fig:RipsEx}
\end{figure}

\bparagraph{Persistence Modules} A \emph{persistence module} $M$ consists of a collection of vector spaces $\{M_r\}_{r\in [0,\infty)}$, together with a collection of linear maps $\{M_{r,s}:M_r\to M_s\}_{r\leq s}$ such that 
\begin{enumerate}
\item[1.]  for all $r\leq s\leq t$ the following diagram commutes:
\[
\begin{tikzcd}[column sep=7.5ex]
  M_r \arrow{dr}{M_{r,t}}\arrow{d}[swap]{M_{r,s}}
 \\
  M_s \arrow{r}[swap]{M_{s,t}}
& M_t.
\end{tikzcd}
\]
\item[2.] $M_{r,r}=\id_{M_r}$ for all $r$.
\end{enumerate}
We say $M$ is \emph{pointwise finite dimensional (\pfd)} if $\dim M_r<\infty$ for all $r$.  

Similar to the definition for filtrations, a morphism $f:M\to N$ of persistence modules is a collection of linear maps $\{f_r:M_r\to N_r\}_{r\in [0,\infty)}$ such that for all $r\leq s$, the following diagram commutes:
\[
\begin{tikzcd}[column sep=7.5ex]
  M_r \arrow{r}{M_{r,s}}\arrow{d}[swap]{f_r}
& M_s \arrow{d}{f_s} \\
  N_r \arrow{r}[swap]{N_{r,s}}
& N_s.
\end{tikzcd}
\]
We say $f$ is an isomorphism if each of the maps $f_r$ is an isomorphism.

\bparagraph{Direct Sums of Persistence Modules} We assume that the reader is familiar with the definition of the direct sum of vector spaces from linear algebra.  For linear maps $f:V_1\to W_1$ and $g:V_2\to W_2$, we define the direct sum \[f\oplus g:V_1\oplus V_2\to W_1\oplus W_2\] by taking $f\oplus g(v,w)=(f(v),g(w))$.  We then define the sum $M\oplus N$ to be the persistent module given by
\[(M\oplus N)_r=M_r\oplus N_r,\qquad (M\oplus N)_{r,s}=M_{r,s}\oplus N_{r,s}.\]  We can define the direct sum of an arbitrary collection of persistence modules in the same way.

\bparagraph{Reduced Homology} Fix a field $\field$.  (For example, we can take $\field=\Q$, or $\field=\Z_2$, the field with two elements.)  For $i\geq 0$, let $\tilde H_i$ denote the $i^{\mathrm{th}}$ reduced singular homology functor with coefficients in $\field$.  Thus, $\tilde H_i$ maps each topological space $S$ to a $\field$-vector space $\tilde H_i(S)$, and maps each continuous function $f:S\to T$ to a linear map $f_*:\tilde H_i(S)\to \tilde H_i(T)$.  
Applying $\tilde H_i$ to each space and each inclusion map in a filtration $\F$ gives us a persistence module $\tilde H_i(\F)$.  Moreover, a morphism of filtrations $f:\F\to \G$ induces an morphism $f_*: \tilde H_i(\F)\to \tilde H_i(\G)$. 

\begin{lemma}\label{lem:ObjectwiseHE}
If a morphism of filtrations $f:\F\to \G$ is an objectwise homotopy equivalence, then for any $i\geq 0$, $f_*:\tilde H_i(\F)\to \tilde H_i(\G)$ is an isomorphism.
\end{lemma}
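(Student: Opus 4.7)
The plan is to unpack the definitions and reduce the statement to the standard homotopy invariance of singular homology. By definition, the morphism $f_\ast : H_i(\F) \to H_i(\G)$ is the collection of linear maps $\{(f_r)_\ast : H_i(\F_r) \to H_i(\G_r)\}_{r \in [0,\infty)}$ obtained by applying the functor $H_i$ to each continuous map $f_r : \F_r \to \G_r$. To show $f_\ast$ is an isomorphism of persistence modules, one must verify two things: that each component map $(f_r)_\ast$ is an isomorphism of vector spaces, and that the resulting family of maps is genuinely a morphism of persistence modules (i.e., the requisite squares commute).

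First I would handle the pointwise statement. By hypothesis, each $f_r$ is a homotopy equivalence of topological spaces, so there is some continuous $g_r : \G_r \to \F_r$ with $g_r \circ f_r \simeq \id_{\F_r}$ and $f_r \circ g_r \simeq \id_{\G_r}$. Since singular homology is a functor that is invariant under homotopy, applying $H_i$ yields $(g_r)_\ast \circ (f_r)_\ast = \id_{H_i(\F_r)}$ and $(f_r)_\ast \circ (g_r)_\ast = \id_{H_i(\G_r)}$. Hence $(f_r)_\ast$ is an isomorphism of $\field$-vector spaces for every $r$.

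Next I would verify the compatibility with the structure maps of the persistence modules. For $r \leq s$, the defining diagram of a morphism of filtrations asserts that $f_s \circ \iota^\F_{r,s} = \iota^\G_{r,s} \circ f_r$, where $\iota^\F_{r,s} : \F_r \hookrightarrow \F_s$ and $\iota^\G_{r,s} : \G_r \hookrightarrow \G_s$ are the inclusions. Applying the functor $H_i$ to this equation of continuous maps gives $(f_s)_\ast \circ H_i(\iota^\F_{r,s}) = H_i(\iota^\G_{r,s}) \circ (f_r)_\ast$, which is exactly the statement that $f_\ast$ commutes with the internal maps $H_i(\F)_{r,s}$ and $H_i(\G)_{r,s}$ of the two persistence modules.

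Combining the two steps shows that $f_\ast$ is a morphism of persistence modules whose every component map is an isomorphism of vector spaces, which is precisely the definition of an isomorphism of persistence modules given just above. I do not expect any real obstacle here: the argument is purely formal, merely repackaging the functoriality and homotopy invariance of $H_i$ in the presence of the filtration-morphism squares.
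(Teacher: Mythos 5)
Your proof is correct and follows essentially the same route as the paper, which simply invokes the standard fact that a homotopy equivalence induces isomorphisms on homology (the paper's earlier definition of the induced morphism $f_*$ already guarantees the commutativity with structure maps that you verify explicitly). Your extra check of the persistence-module morphism condition is harmless and makes the argument more self-contained, but adds nothing beyond the paper's one-line proof.
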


\begin{proof}
It is a standard fact that if a continuous map $g$ is a homotopy equivalence, then $\tilde H_i(g)$ is an isomorphism.  This gives the result.
\end{proof}

\bparagraph{Barcodes}
We say $\J\subseteq \R$ is an \emph{interval} if $\J$ is nonempty and connected.  For $\J$ an interval, define the \emph{interval module} $I^\J$ to be the persistence module such that 
\begin{align*}
I^\J_r&=
\begin{cases}
\field &{\textup{if }} r\in \J, \\
0 &{\textup{ otherwise}.}
\end{cases}
& I^\J_{r,s}=
\begin{cases}
\id_\field &{\textup{if }} r\leq s\in \J,\\
0 &{\textup{ otherwise}.}
\end{cases}
\end{align*}

\begin{theorem}[Structure of Persistence Modules \cite{crawley2012decomposition}]\label{Thm:ordinary_stability}
If $M$ is a \pfd persistence module, then there exists a unique collection of intervals $\B_M$ such that 
\[M\cong \oplus_{\J\in \B_M} I^\J.\]  
\end{theorem}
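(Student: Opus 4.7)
The plan is to prove existence and uniqueness of the decomposition separately, following the approach of Crawley-Boevey.

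For uniqueness, I would invoke the Krull-Schmidt-Azumaya theorem. The key observation is that for any interval $\J$, the endomorphism ring of $I^\J$ is isomorphic to $\field$: a morphism $f: I^\J \to I^\J$ assigns a scalar $c_r \in \field$ at each $r \in \J$, and compatibility with the identity structure maps forces all the $c_r$ to be equal. Since $\field$ is a field, hence local, the Krull-Schmidt-Azumaya theorem guarantees that any two decompositions of $M$ into modules with local endomorphism rings agree up to permutation and isomorphism. Since $I^\J \cong I^{\J'}$ only when $\J = \J'$, the multiset $\B_M$ is determined by $M$.

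For existence, the strategy has two ingredients: characterizing indecomposable \pfd persistence modules as interval modules, and then producing a decomposition of $M$ into indecomposable summands. For the characterization, suppose $N$ is indecomposable and \pfd, with support $\J := \{r : N_r \neq 0\}$. If some structure map $N_{r,s}$ failed to have its image equal to $N_s$ or its kernel equal to $0$ in a suitably compatible way, one could split $N_r$ or $N_s$ using a complement of the image or kernel and propagate this to a nontrivial direct sum decomposition of $N$, contradicting indecomposability. A careful pointwise analysis using this principle shows $\dim N_r \leq 1$ for all $r$, forces $\J$ to be connected, and identifies $N$ with $I^\J$.

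For the decomposition of $M$ into indecomposables, I would use Hausdorff's maximality principle (Zorn's lemma) to produce a maximal collection $\mathcal{C}$ of interval submodules of $M$ that form an internal direct sum $M' := \bigoplus_{N \in \mathcal{C}} N \subseteq M$. Using pointwise finite-dimensionality, one then shows $M' = M$: otherwise the quotient $M/M'$ would contain an indecomposable summand, which by the characterization is an interval module, and this could be lifted back to $M$ to produce a submodule extending $\mathcal{C}$, contradicting maximality.

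The main obstacle is precisely this last step, namely producing indecomposable summands of a given \pfd persistence module when $[0,\infty)$ is uncountable, so that classical Krull-Schmidt does not apply directly. Extracting an interval summand requires Crawley-Boevey's technical machinery: one analyzes, for each $r$ and each vector $v \in M_r$, the set of scales at which $v$ lies in the image of structure maps and at which its image under structure maps vanishes, in order to identify a one-dimensional subfunctor isomorphic to some $I^\J$ that splits off as a direct summand. Controlling these birth/death scales consistently across uncountably many parameter values is the main technical hurdle that a naive approach would fail to overcome.
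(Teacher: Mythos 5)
The paper gives no proof of this theorem: it is imported verbatim from Crawley-Boevey \cite{crawley2012decomposition}, so your attempt must be measured against that cited proof rather than against anything in the paper. Your uniqueness argument is correct and is the standard one: the pointwise scalars of an endomorphism of $I^\J$ must agree across the connected interval $\J$, so $\mathrm{End}(I^\J)\cong\field$ is local, and the Krull--Schmidt--Azumaya theorem pins down the multiset $\B_M$.

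The existence half, however, contains a genuine gap, which you partly concede but do not close. The ``splitting principle'' by which you claim indecomposable \pfd modules are interval modules is not valid as stated: choosing a complement of an image or kernel at one index and ``propagating'' it requires making such choices coherently across uncountably many structure maps, and that coherence is exactly the hard content of the theorem, not a routine pointwise analysis. A concrete sanity check: your principle nowhere uses that the index set is totally ordered, yet for \pfd modules indexed by the poset $\Z^2$ there exist indecomposables of pointwise dimension greater than one, so any argument of this shape that ignores the total order must fail. The Zorn step has a second gap: to contradict maximality of the internal direct sum $M'$ you need both (a) that the nonzero quotient $M/M'$ has an indecomposable direct summand --- for general modules this can fail, and for \pfd persistence modules it is essentially equivalent to the existence statement you are proving --- and (b) that this summand lifts to a submodule of $M$ meeting $M'$ trivially, which requires a splitting of $M\to M/M'$ over the summand that you never construct; extensions of interval modules need not split a priori.

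It is also worth noting that Crawley-Boevey's actual proof does not take your route at all. Rather than abstractly decomposing into indecomposables and classifying them, he constructs the interval decomposition directly by the functorial filtration method: to each pair of ``cuts'' of the index set he associates subspaces of $M_r$ defined by images and kernels of structure maps, and a lemma about a vector space equipped with two chains of subspaces (where pointwise finite-dimensionality enters) lets these be split off compatibly. Your closing paragraph gestures at precisely this machinery, but invoking it wholesale means your proposal defers the proof of existence to \cite{crawley2012decomposition} rather than supplying one.
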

We call $\B_M$ the \emph{barcode of $M$}. For $\F$ a filtration, we write $\B_{\tilde H_i(\F)}$ simply as $\B_i(\F)$.  Similarly, for $P$ a finite metric space, we write $\B_i(\Rips(P))$ simply as $\B_i(P)$.

\begin{remark}
As it is defined using reduced homology, $\B_0(\F)$ differs from the $0^{\mathrm{th}}$ barcode constructed using unreduced homology by the removal of an infinite length interval.
\end{remark}

\begin{definition}[Gromov-Hausdorff Distance] Given two subspaces $P,Q$ of a metric space $Z$, we define the \emph{Hausdorff distance} between $P$ and $Q$, by
\[d_H(P,Q):=\max\{\,\sup_{p \in P} \inf_{q \in Q} d(p,q),\, \sup_{q \in Q} \inf_{p \in P} d(p,q)\}.\]
For $P$ and $Q$ any compact metric spaces, define $d_{GH}(P,Q)$, the \emph{Gromov-Hausdorff distance} between $P$ and $Q$, to be the infimum of $d_H(\gamma(P),\kappa(Q))$ over all isometric embeddings $\gamma:P\to Z$, $\kappa:Q\to Z$ into a third metric space $Z$.
\end{definition}

The following stability result is well known, and plays a central role in topological data analysis.  Let $d_B$ denote the \emph{bottleneck distance} on persistence barcodes, as defined for example in \cite{chazal2014persistence}.

\begin{theorem}[Stability of Persistent Homology \protect\cite{cohen2007stability,chazal2009gromov,chazal2014persistence}]\label{Thm:Stability}
For any finite metric spaces $P$, $Q$ and $i\geq 0$,
\[d_B(\B_i(P),\B_i(Q))\leq d_{GH}(P,Q).\]
\end{theorem}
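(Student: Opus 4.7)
The plan is to prove the bound by building, for each $\epsilon > d_{GH}(P,Q)$, an $\epsilon$-interleaving between the persistence modules $H_i(\Rips(P))$ and $H_i(\Rips(Q))$, and then invoking the Algebraic Stability Theorem for interleaved persistence modules, which says that an $\epsilon$-interleaving of \pfd persistence modules $M,N$ implies $d_B(\B_M,\B_N)\leq \epsilon$. Taking the infimum over $\epsilon$ will then yield the claim. This reduces the geometric/metric statement to an essentially combinatorial construction on Vietoris-Rips complexes.

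Concretely, first I would unwind the definition of $d_{GH}$. Fixing any $\epsilon > d_{GH}(P,Q)$, there exist isometric embeddings $\gamma \colon P \to Z$ and $\kappa \colon Q \to Z$ into a common metric space $Z$ with $d_H(\gamma(P),\kappa(Q)) < \epsilon$. Equivalently (and more convenient combinatorially), there is a correspondence $C \subseteq P \times Q$ whose metric distortion satisfies
\[
\operatorname{dis}(C) := \sup_{(p,q),(p',q')\in C}\bigl|\,d_P(p,p')-d_Q(q,q')\,\bigr| < 2\epsilon.
\]
Using $C$, I choose for each $p\in P$ some $\phi(p)\in Q$ with $(p,\phi(p))\in C$, and similarly $\psi(q)\in P$ for each $q\in Q$. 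The distortion bound then implies that $\phi$ sends every simplex of $\Rips(P)_r$ to a simplex of $\Rips(Q)_{r+\epsilon}$, and symmetrically for $\psi$; that is, $\phi$ and $\psi$ extend to simplicial maps
\[
\phi_r\colon \Rips(P)_r \to \Rips(Q)_{r+\epsilon},\qquad \psi_r\colon \Rips(Q)_r \to \Rips(P)_{r+\epsilon},
\]
compatible with the inclusions as $r$ varies.

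Next I would verify the interleaving relations on the level of $H_i$. The compositions $\psi_{r+\epsilon}\circ \phi_r$ and the inclusion $\Rips(P)_r \hookrightarrow \Rips(P)_{r+2\epsilon}$ both land in $\Rips(P)_{r+2\epsilon}$, and for any vertex $p$ the images $p$ and $\psi(\phi(p))$ are joined in $\Rips(P)_{r+2\epsilon}$ (since $d_P(p,\psi(\phi(p)))\leq \operatorname{dis}(C)<2\epsilon$ via the correspondence pair $(p,\phi(p))$ and $(\psi(\phi(p)),\phi(p))$). A standard straight-line/contiguity argument then shows that these two simplicial maps are contiguous, hence induce the same map on $H_i$; the symmetric statement holds for $\phi\circ\psi$. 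This gives precisely an $\epsilon$-interleaving of the persistence modules $H_i(\Rips(P))$ and $H_i(\Rips(Q))$ in the sense of \cite{chazal2009gromov,chazal2014persistence}. Applying the algebraic stability theorem and letting $\epsilon \searrow d_{GH}(P,Q)$ completes the proof.

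The main technical obstacle is the contiguity (homotopy) step: one must confirm that $\psi\circ\phi$ and the inclusion agree up to chain homotopy, which requires a careful application of the triangle inequality together with the Vietoris-Rips condition to ensure that every simplex appearing in the prism construction (the cones over pairs $\{p,\psi(\phi(p))\}$, and their iterates) actually lies in $\Rips(P)_{r+2\epsilon}$. Once this is established, the rest is bookkeeping: the simplicial maps are clearly compatible with the filtration inclusions, so the interleaving diagrams commute on the nose (not merely up to homotopy) after applying $H_i$, and the algebraic stability theorem is a black-box input.
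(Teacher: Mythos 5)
Your proposal is correct and coincides with the standard proof of this theorem in the references the paper cites; the paper itself states \cref{Thm:Stability} without proof, attributing it to \cite{cohen2007stability,chazal2009gromov,chazal2014persistence}, and your route---replace $d_{GH}$ by a correspondence $C$ with $\operatorname{dis}(C)<2\epsilon$, extend choice functions to simplicial maps $\Rips(P)_r\to\Rips(Q)_{r+\epsilon}$, verify contiguity of $\psi\circ\phi$ with the inclusion into $\Rips(P)_{r+2\epsilon}$, and invoke algebraic stability---is precisely the argument of Chazal--de Silva--Oudot. Your constants are also right for this paper's convention that $\Rips(P)_r$ admits simplices of diameter at most $2r$ (which is why the bound is $d_B\leq d_{GH}$ rather than $2\,d_{GH}$), and the contiguity check does close: for $[p_1,\ldots,p_n]\in\Rips(P)_r$, the triangle inequality through $C$ gives that $\{p_i\}\cup\{\psi\phi(p_i)\}$ has diameter at most $2r+4\epsilon=2(r+2\epsilon)$, so the union is a simplex of $\Rips(P)_{r+2\epsilon}$ as required.
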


The following variant of \cref{Thm:Stability}, which appears in slightly different language in \cite{levanger2019comparison}, can be proven by a slight modification of the proof of \cref{Thm:Stability}.
\begin{theorem}[Stability for a Metric Subspace {\cite[Proposition 5.6]{levanger2019comparison}}]\label{Thm:SubspaceStability}
For finite metric spaces $P\subseteq Q$ and $i\geq 0$,
\[d_B(\B_i(P),\B^S_i(Q))\leq \frac{1}{2}\,d_{H}(P,Q),\]
where $\B^S_i(Q)$ is the barcode obtained by shifting each interval of $\B_i(Q)$ to the right by $\frac{1}{2} d_{H}(P,Q)$.  
\end{theorem}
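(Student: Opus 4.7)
The plan is to adapt the standard proof of \cref{Thm:Stability} so as to exploit the containment $P \subset Q$. Specifically, I would construct a $\tfrac{1}{2}d_H(P,Q)$-interleaving between $H_i(\Rips(P))$ and the shift of $H_i(\Rips(Q))$ to the right by $\tfrac{1}{2}d_H(P,Q)$, then invoke the standard bottleneck--interleaving correspondence (implicit in \cref{Thm:Stability}) to conclude. The key structural feature being exploited is that the inclusion $\iota:P\hookrightarrow Q$ is an isometric embedding with zero distortion, so the entire ``distortion budget'' of $d_H(P,Q)$ sits on one side of the interleaving and can be split evenly, yielding the factor $\tfrac{1}{2}$ that distinguishes this statement from what one would obtain by applying \cref{Thm:Stability} directly to the pair $(P,Q)$.

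Set $\delta := d_H(P,Q)$. Since $P \subset Q$, for each $q\in Q$ one can choose $\pi(q)\in P$ with $d(q,\pi(q))\leq \delta$; I would additionally insist that $\pi(p)=p$ for every $p\in P$, so that $\pi\circ\iota = \mathrm{id}_P$. Two families of simplicial maps then emerge. First, the inclusion gives scale-preserving maps $\iota_r:\Rips(P)_r\hookrightarrow \Rips(Q)_r$. Second, if $[q_0,\ldots,q_n]$ is a simplex of $\Rips(Q)_r$ (so its diameter is $\leq 2r$), then the triangle inequality gives $d(\pi(q_i),\pi(q_j))\leq 2r+2\delta$, so $\pi$ induces simplicial maps $\pi_r:\Rips(Q)_r\to \Rips(P)_{r+\delta}$. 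Passing to $H_i$ and reindexing, the families $\phi_r := (\iota_r)_*: H_i(\Rips(P)_r)\to H_i(\Rips(Q)_r)$ and $\psi_r:= (\pi_{r-\delta/2})_*: H_i(\Rips(Q)_{r-\delta/2})\to H_i(\Rips(P)_{r+\delta/2})$ are the candidate interleaving morphisms.

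It then remains to verify the two interleaving triangles, which is where the actual work lies. One triangle is free: because $\pi\circ\iota=\mathrm{id}_P$, the composite $\pi_r\circ\iota_r$ agrees on vertices -- and hence as a simplicial map -- with the scale-inclusion $\Rips(P)_r\hookrightarrow \Rips(P)_{r+\delta}$. The other triangle -- the step I expect to be the main obstacle -- requires showing that $\iota_{r+\delta/2}\circ \pi_{r-\delta/2}:\Rips(Q)_{r-\delta/2}\to \Rips(Q)_{r+\delta/2}$ induces the same map on $H_i$ as the scale-inclusion. The tool is simplicial contiguity: for $[q_0,\ldots,q_n]$ a simplex of $\Rips(Q)_{r-\delta/2}$, one checks that the pairwise distances inside $\{q_0,\ldots,q_n,\pi(q_0),\ldots,\pi(q_n)\}$ are all at most $2r+\delta$ -- the worst case, a pair of $\pi$-values, gives $\delta+(2r-\delta)+\delta=2r+\delta$ -- so this combined set is a simplex of $\Rips(Q)_{r+\delta/2}$. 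Hence $\iota\circ\pi$ and the scale-inclusion are contiguous, agree on homology, and close the second triangle. The resulting $\delta/2$-interleaving, fed into the isometry theorem, yields the desired bound $d_B(\B_i(P),\B^S_i(Q))\leq \tfrac{1}{2}d_H(P,Q)$.
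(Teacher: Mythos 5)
Your proof is correct, and it is essentially the argument the paper intends: the paper does not prove \cref{Thm:SubspaceStability} itself, deferring to \cite{harker2018comparison} with the remark that it follows by a slight modification of the proof of \cref{Thm:Stability}, and your asymmetric interleaving (the zero-shift inclusion $\iota$ in one direction, the $\delta$-shift projection $\pi$ in the other, symmetrized by shifting $\B_i(Q)$ by $\delta/2$) is exactly that modification. Both interleaving triangles are closed correctly --- the first on the nose via $\pi\circ\iota=\mathrm{id}_P$, the second by your contiguity computation, whose worst-case bound $2r+\delta=2(r+\delta/2)$ is right for the paper's diameter-$\leq 2r$ convention --- so nothing is missing.
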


\subsection{Stability of the Topological Novelty Profile}\label{Sec:Stabilty_Top_Nov_Profile}
For $\eh$ a history indexed by $G$ and $\forest\subseteq G$ the forest of \cref{Def:Top_Novelty_Profile}, define a filtration $\F$ by $\F_r:=\forest\,\cup\, \Rips(\eh)_r$.  (In the expression $\Rips(\eh)$, $\eh$ is understood to denote the metric space of \cref{Def:Sym_Diff_Metric}.)  It's easy to check that $\mathcal T(\eh)$, the topological novelty profile of $\eh$, is exactly the list of right endpoints of intervals in $\B_0(\F)$, possibly with some copies of 0 added in.

We can use this description to obtain a simple stability result for topological novelty profiles analogous to the one for temporal novelty profiles mentioned in \cref{Remark:Temporal_Novelty_Stability}:

\begin{proposition}
\label{Prop:Topological_Novelty_Stability}
Given histories $\eh$ and $\eh'$ indexed by the same phylogenetic graph $G$ with $|d(\eh_v,\eh_w)-d(\eh'_v,\eh'_w)|\leq \epsilon$ for all vertices $v,w$ of $G$, we have \[d_\infty(\mathcal T(\eh),\mathcal T(\eh')) \leq \epsilon.\]
\end{proposition}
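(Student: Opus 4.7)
The plan is to invoke the characterization of $\mathcal T(\eh)$ stated just before the proposition, namely that $\mathcal T(\eh)$ is the sorted list of right endpoints of intervals in $\B_0(\F^\eh)$, where $\F^\eh_r := \forest \cup \Rips(\met \eh)_r$, padded with zeros if necessary. I would then deduce the bound from the stability of persistent homology, adapted to this slightly non-standard filtration.

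First, I would bound the Gromov--Hausdorff distance between $\met \eh$ and $\met \eh'$. Consider the correspondence $R = \{(\eh_v, \eh'_v) : v \in V(G)\}$. For any $v, w \in V(G)$, the triangle inequality for the symmetric difference metric yields
\[|d(\eh_v, \eh_w) - d(\eh'_v, \eh'_w)| \leq d(\eh_v, \eh'_v) + d(\eh_w, \eh'_w) \leq 2\epsilon,\]
so $R$ has distortion at most $2\epsilon$ and $d_{GH}(\met \eh, \met \eh') \leq \epsilon$. Although \cref{Thm:Stability} is stated for pure Rips filtrations, essentially the same interleaving argument applies here: the correspondence $R$ induces simplicial maps between Rips complexes at scales shifted by $\epsilon$, and these extend to morphisms $\F^\eh_r \to \F^{\eh'}_{r+\epsilon}$ and $\F^{\eh'}_r \to \F^\eh_{r+\epsilon}$, since $\forest$ sits inside both filtrations at every scale and maps to itself. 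Passing to $H_0$ produces an $\epsilon$-interleaving of persistence modules, and the algebraic stability theorem then yields $d_B(\B_0(\F^\eh), \B_0(\F^{\eh'})) \leq \epsilon$.

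Next, I would translate this bottleneck bound into the desired $d_\infty$ bound. Every interval of $\B_0(\F^\eh)$ and $\B_0(\F^{\eh'})$ begins at $r = 0$, and the number of such intervals is determined by the combinatorics of $G$ alone; a standard exchange argument then shows that an optimal bottleneck matching may be chosen to pair the $i^{\mathrm{th}}$ longest interval on one side with the $i^{\mathrm{th}}$ longest on the other. This matching directly witnesses $d_\infty(\mathcal T(\eh), \mathcal T(\eh')) \leq \epsilon$ after accounting for zero-padding.

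The main obstacle is bookkeeping when $\eh_v = \eh_w$ for some $v \neq w$: in that case $\met \eh$ collapses duplicates, and the number of $H_0$ classes of $\F^\eh$ at scale $0$ may differ between the two histories. I would sidestep this by working with the labeled multiset $\{(v, \eh_v) : v \in V(G)\}$ in place of the set $\{\eh_v : v \in V(G)\}$, so that both filtrations share identical vertex sets indexed by $V(G)$ and the interleaving argument goes through without modification.
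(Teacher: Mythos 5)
Your interleaving half is sound, and it is in fact the honest, spelled-out version of the paper's entire proof: the paper disposes of this proposition in one sentence by citing generalized stability theorems for filtrations over a common index set, which is precisely the $\epsilon$-interleaving of $\F^\eh$ and $\F^{\eh'}$ that you construct by hand over the fixed vertex set $V(G)$ (your labeled-multiset device is the right way to make that construction literal). The gap is in your final step, translating $d_B\leq\epsilon$ into $d_\infty(\T(\eh),\T(\eh'))\leq\epsilon$, and it is twofold. First, your claim that the number of intervals of $\B_0(\F^\eh)$ is ``determined by the combinatorics of $G$ alone'' is false: a bar of length zero is the empty interval and does not appear in the barcode, so whenever two components of $\forest$ contain genomes at distance $0$ the corresponding bar vanishes, and this can differ between $\eh$ and $\eh'$. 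Your labeled-multiset fix does not repair this, since the Rips complex at scale $0$ still contains the edge between duplicate labels and the components still merge there; this is exactly why the paper's identification of $\T(\eh)$ carries the clause ``possibly with some copies of 0 added in.'' Second, and fatally, bottleneck distance permits an interval to be matched to the diagonal at cost half its length, so $d_B\leq\epsilon$ is compatible with one barcode containing a bar $[0,2\epsilon)$ that is simply absent from the other; against zero-padding this is a $d_\infty$ discrepancy of $2\epsilon$. No exchange argument restores a monotone bijective matching here, because deleting a short bar can be strictly cheaper than matching it. The most your chain can yield is $d_\infty(\T(\eh),\T(\eh'))\leq 2\epsilon$.

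This factor of $2$ is not an artifact of the method: the triangle inequality only gives $|d(\eh_a,\eh_b)-d(\eh'_a,\eh'_b)|\leq 2\epsilon$, and that slack is realizable within the history axioms. Take $G$ with root $\to p_1$, root $\to p_2$, and recombinant $r$ with parents $p_1,p_2$; set $\eh_{\mathrm{root}}=\emptyset$, $\eh_{p_1}=A$ and $\eh_{p_2}=B$ disjoint with $|A|=|B|=1000$, and $\eh_r=A'\cup B'$ with $|A\setminus A'|=|B'|=5$, so $\T(\eh)=(10)$ via the edge $(p_1,r)$. Now let $\eh'$ add $\epsilon$ fresh mutations to $p_1$ and $\epsilon$ mutations of $B\setminus B'$ to $r$; every genome moves by at most $\epsilon$, yet the three candidate weights become $1000+\epsilon$, $10+2\epsilon$, $1990-\epsilon$, so $\T(\eh')=(10+2\epsilon)$ and $d_\infty(\T(\eh),\T(\eh'))=2\epsilon$. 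So the bound with constant $\epsilon$ appears unattainable as stated (the same example affects \cref{Remark:Temporal_Novelty_Stability}), and your argument, once repaired, lands on the sharp constant $2\epsilon$. For a constant-explicit proof it is cleaner to bypass persistence entirely: $\T(\eh)$ is the sorted vector of the extra-edge weights of a minimum spanning tree rel $\forest$, with exactly one extra edge per recombinant (so no padding issues arise), all pairwise weights perturb by at most $2\epsilon$, and a two-sided comparison using \cref{Prop: Rel_Spanning_Tree_Indexwise_Min} shows the sorted extra-edge weight vector is $1$-Lipschitz in the sup norm of the weights, giving $d_\infty(\T(\eh),\T(\eh'))\leq 2\epsilon$ directly.
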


\begin{proof}
This follows immediately from a generalized version of the stability theorem for persistent homology, as described in \cite{chazal2009proximity}, \cite{chazal2012structure}, and  \cite{bauer2015induced}.
\end{proof}

\subsection{Discrete Morse Theory}\label{Sec:DMT}
The proof of our main results relies on discrete Morse theory (DMT), a well known combinatorial theory concerning topology-preserving collapses of cell complexes.  We will not need the full strength of standard DMT; we review only what we need for our proof.  See \cite{forman2002user} for a detailed introduction to DMT.

Recall that for $G$ a graph with no self-edges $(v,v)$, a \emph{matching} $X$ in $G$ is a subset of the edges of $G$ such that no two edges in $X$ are incident to the same vertex.  For $S$ a simplicial complex, the \emph{Hasse graph $G_S$} of $S$ is the directed graph with vertices the simplices of $S$ and an edge from $s$ to $s'$ if and only if $s'$ is a codimension-1 face of $s$.   A matching $\vfa$ in $G_S$ is said to be \emph{acyclic} if when we modify the graph $G_S$ by reversing the orientation of all edges in $\vfa$, while leaving the orientation of all other edges unchanged, we obtain a  directed acyclic graph.  

A \emph{discrete gradient vector field (DGVF)} $\vfa$ on $S$ is an acyclic matching in $G_S$.  A simplex $\sigma\in S$ is called \emph{critical} in $\vfa$ if $\sigma$ is not matched in $\vfa$. 

The acyclicity condition admits an alternative formulation which is often convenient.  Given a matching $\vfa$ in $G_S$, we define an \emph{$\vfa$-path} to be a sequence of simplices in $S$ \[\sigma_0, \tau_0, \sigma_1, \tau_1,. . . , \sigma_m ,\tau_m, \sigma_{m+1}\] such that for each $j\in \{0,\ldots,m\}$, the following are true:
\begin{itemize}
\item $\sigma_j$ is a face of $\tau_j$ and $\vfa$ matches $\sigma_j$  to $\tau_j$,
\item $\sigma_{j+1}$ is a codimension-1 face of $\tau_j$,
\item $\sigma_j\ne \sigma_{j+1}$.
\end{itemize} 
We say the $\vfa$-path is a \emph{non-trivial} if $m\geq 0$, and \emph{closed} if $\sigma_0=\sigma_{m+1}$. 

\begin{proposition}[{\cite[Theorem 6.2]{forman2002user}}]\label{Prop:K_Paths}
A matching $\vfa$ in $G_S$ is a DGVF if and only if there exists no non-trivial closed $\vfa$-path.
\end{proposition}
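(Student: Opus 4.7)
The plan is to argue directly from the definitions, exhibiting a tight correspondence between non-trivial closed $\vfa$-paths in $S$ and directed cycles in the modified Hasse graph $G_S'$ (obtained from $G_S$ by reversing every edge of $\vfa$). By definition, $\vfa$ is a DGVF exactly when $G_S'$ is a DAG, so the proposition reduces to the assertion that $G_S'$ has a directed cycle if and only if $\vfa$ admits a non-trivial closed path.

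First I would handle the \emph{forward} direction. Suppose $\sigma_0, \tau_0, \sigma_1, \tau_1, \ldots, \sigma_m, \tau_m, \sigma_{m+1} = \sigma_0$ is a non-trivial closed $\vfa$-path. Each step $\sigma_j \to \tau_j$ is an edge of $G_S'$ because it is the reverse of the matched edge $\tau_j \to \sigma_j$ of $G_S$, and each step $\tau_j \to \sigma_{j+1}$ is an edge of $G_S'$ because $\sigma_{j+1}$ is a codimension-1 face of $\tau_j$ with $\sigma_{j+1} \neq \sigma_j$, so the edge is not in $\vfa$ (the unique partner of $\tau_j$ under $\vfa$ is $\sigma_j$) and hence retains its original orientation. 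Concatenating these edges yields a directed cycle in $G_S'$.

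For the \emph{converse}, suppose $G_S'$ contains a directed cycle $C$. I would classify its edges as \emph{up-edges} (reversed matched edges, which raise dimension by $1$) or \emph{down-edges} (unmatched edges of $G_S$, which lower dimension by $1$). Because $C$ closes up, the numbers of up- and down-edges must agree, say both equal $m$. The key observation is that two up-edges cannot appear consecutively in $C$: an up-edge terminates at a simplex $\tau$ whose unique $\vfa$-partner is the preceding simplex, so no outgoing edge of $\tau$ in $G_S'$ is a reversed matched edge. Consequently, the cyclic sequence of $m$ up-edges partitions $C$ into $m$ maximal runs of down-edges, each of length at least $1$; since the total number of down-edges is also $m$, each run has length exactly $1$. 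The cycle therefore alternates strictly between up- and down-edges, and reading off the vertices in order exhibits $C$ as a non-trivial closed $\vfa$-path.

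The main obstacle I anticipate is precisely this alternation step in the converse: one has to combine the balance of up- and down-edges (forced by closure of $C$) with the fact that $\vfa$ is a partial matching (so up-edges cannot be adjacent) to conclude that down-edges cannot be chained either. Once this pigeonhole argument is in place, the remainder is a direct transcription between the two languages.
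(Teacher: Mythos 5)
Your proof is correct: the paper does not prove this proposition but cites Forman's Theorem~6.2 for it, and your translation between non-trivial closed $\vfa$-paths and directed cycles in the modified Hasse graph --- with the up/down edge balance plus the matching property forcing strict alternation --- is precisely the standard argument behind that citation. The one condition you leave implicit, that $\sigma_{j+1}\neq\sigma_j$ when reading an alternating cycle off as a $\vfa$-path, already follows from your classification, since the down-edge $\tau_j\to\sigma_{j+1}$ is an unmatched edge of $G_S$ retained in the modified graph, whereas the edge from $\tau_j$ to $\sigma_j$ was matched and hence reversed.
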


The following is one of the basic results of discrete Morse theory:
\begin{proposition}[{\cite[Theorem 11.13]{kozlov2008combinatorial}}]\label{Prop:DMT}
\mbox{}
\begin{enumerate}[(i)]
\item Suppose that $\vfa$ is a DGFV on a finite simplicial complex $S$.  Then $S$ is homotopy equivalent to a CW-complex with exactly one cell of dimension $i$ for each critical $i$-simplex of $\vfa$.
\item If the critical simplices of $\vfa$ form a subcomplex $S'\subseteq S$, then in fact $S$ deformation retracts onto $S'$.
\end{enumerate}
\end{proposition}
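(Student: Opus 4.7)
The plan is to realize the DGVF $\vfa$ as a sequence of elementary collapses on $S$, using the acyclicity of $\vfa$ to justify the ordering. Recall that an elementary collapse removes a matched pair $(\sigma, \tau)$ in which $\tau$ is the unique proper coface of $\sigma$ in the current complex; such a collapse is a standard deformation retraction. If all matched pairs of $\vfa$ can be sequentially performed as elementary collapses, then for part (ii) — where the critical simplices form a subcomplex $S'$ — the composition of these deformation retractions produces the claimed deformation retraction onto $S'$.

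Setting up the ordering is the core of the argument and uses \cref{Prop:K_Paths}. I would first define a strict partial order on matched pairs of equal dimension by setting $(\sigma, \tau) \prec (\sigma', \tau')$ whenever a non-trivial $\vfa$-path connects $\sigma'$ to $\sigma$; the absence of non-trivial closed $\vfa$-paths makes $\prec$ well-defined, and transitivity follows by concatenation of paths. Extend $\prec$ to a total order within each dimension, then concatenate by processing pairs in order of decreasing $\dim \tau$. The key technical claim is that at stage $k$, the pair $(\sigma_k, \tau_k)$ is free in the current complex. Higher-dimensional cofaces of $\sigma_k$ have already been removed with the earlier processing of higher-dimensional pairs; same-dimension matched cofaces of $\sigma_k$ other than $\tau_k$ would yield a one-step $\vfa$-path producing a pair that is $\prec$-greater than $(\sigma_k, \tau_k)$ but processed later, contradicting the ordering; and the subcomplex assumption rules out critical cofaces of $\sigma_k$, since any such coface, together with closure of $S'$ under faces, would force $\sigma_k$ itself to be critical. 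This establishes part (ii).

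For part (i), where the critical simplices need not form a subcomplex, critical cofaces pose a genuine obstruction to pure elementary collapses, and a more delicate argument is required. The standard approach, which I would follow, is to equip $\vfa$ with a compatible discrete Morse function and filter $S$ by its sublevel sets, showing inductively that passing a critical value of index $i$ corresponds to attaching an $i$-cell. The attaching map of each critical cell is described in terms of $\vfa$-paths between critical simplices of consecutive dimensions. The main obstacle, both in the collapse-based argument for part (ii) and in the CW-attachment argument for part (i), is the combinatorial bookkeeping needed to track $\vfa$-paths and verify either the ordering or the attaching data; since this machinery is carried out in detail in \cite{forman1998morse,kozlov2008combinatorial}, I would appeal to those references rather than reprove it here.
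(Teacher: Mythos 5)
The paper does not prove this proposition: it is quoted as a standard background result of discrete Morse theory, cited directly to \cite{forman1998morse,kozlov2008combinatorial}, so there is no internal proof to compare against. Your sketch correctly reproduces the standard arguments from exactly those references---the acyclicity-ordered sequence of elementary collapses for part (ii) and the discrete-Morse-function/sublevel-set cell-attachment argument for part (i)---and your deferral of the remaining bookkeeping to the same sources matches the paper's own treatment.
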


\section{Barcodes of Histories Indexed by Galled Trees}\label{Sec:Barcodes_Of_Histories_Indexed_By_Galled_Trees}
The topological novelty profile and $0^{\mathrm{th}}$ persistence barcode of an evolutionary history are closely related by the following result, whose easy verification we leave to the reader:

For $\B{}$ a barcode, let $\lens(\B{})$ denote the list of lengths of intervals of $\B{}$, sorted in descending order.

\begin{proposition}\label{Prop:Novelty_And_0D_Barcodes}
Suppose we are given a history $\eh$ and $\delta>0$ such that $d(\eh_v,\eh_w)< \delta$ whenever $w$ is a clone with parent $v$.  Then the lists obtained from $\lens(\B_0(\eh))$ and $\T(\eh)$ by removing all entries less than $\delta$ are equal.
\end{proposition}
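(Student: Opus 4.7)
The plan is to reduce both $\lens(\B_0(\eh))$ and $\T(\eh)$ to multisets of edge weights in minimum spanning trees of the complete graph $\bar G$ on the vertex set of $\eh$ weighted by the symmetric difference metric, and then compare these multisets using Kruskal's algorithm.

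First, I would invoke the classical correspondence between the $0^{\textup{th}}$ persistence barcode of a Rips filtration and the minimum spanning tree: for a finite metric space $P$, the multiset $\lens(\B_0(\Rips(P)))$ equals, up to the scaling convention in the definition of $\Rips$, the multiset of edge weights in any minimum spanning tree of $\bar G$.  Applied to $P=\eh$, this identifies the entries of $\lens(\B_0(\eh))$ with the edge weights of an MST $T^\ast$ of $\bar G$.  On the other side, by \cref{Def:Top_Novelty_Profile} the entries of $\T(\eh)$ are the weights of edges in $T\setminus F^G$ for $T$ any minimum spanning tree of $\bar G$ rel $F^G$.  Since every $F^G$-edge has weight strictly less than $\delta$ by hypothesis, discarding $F^G$ from $T$ does not affect the entries of weight $\geq\delta$.

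The main step is then to show that the multiset of edge weights $\geq\delta$ in an MST of $\bar G$ agrees with the multiset of edge weights $\geq\delta$ in an MST of $\bar G$ rel $F^G$.  I would run Kruskal's algorithm in parallel for the two problems, starting from the empty forest in the absolute case and from $F^G$ in the relative case, and in both cases processing the remaining edges in order of increasing weight.  The key observation is that after both algorithms have finished processing edges of weight $<\delta$, the partial forests have identical connected components: each is a spanning forest of the subgraph $H_{<\delta}\subseteq \bar G$ consisting of all edges of weight $<\delta$ (which contains $F^G$ by hypothesis), and the components of any spanning forest coincide with those of the ambient graph.  From this point forward, both algorithms process edges of weight $\geq\delta$ in the same order and make identical add-or-skip decisions, which depend only on the current component partition; hence they add the same multiset of weights in this phase.

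The subtlety I expect to require the most care is tie-breaking when several edges of weight $\geq\delta$ share the same weight: to pass from agreement of add-or-skip decisions to equality of weight multisets (rather than merely of sorted lists), I would appeal to \cref{Prop: Rel_Spanning_Tree_Indexwise_Min}, which gives uniqueness of the weight multiset across all minimum spanning trees rel $F$ (with $F=\emptyset$ recovering the absolute case).  A secondary bookkeeping point is the global scaling convention relating the Rips scale parameter to edge weight; this applies uniformly and commutes with thresholding at $\delta$, so it does not affect the comparison.
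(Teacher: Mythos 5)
The paper offers no written proof of \cref{Prop:Novelty_And_0D_Barcodes}---it is stated ``whose easy verification we leave to the reader''---so there is nothing to compare line by line; your argument is a correct and complete version of the intended verification, consistent with the paper's own identification in \cref{Sec:Stabilty_Top_Nov_Profile} of $\T(\eh)$ with the right endpoints of a $0^{\mathrm{th}}$ barcode. All three of your ingredients check out: the single-linkage/MST description of $\lens(\B_0(\eh))$; the observation that every edge of $\forest$ joins a clone to its parent and hence has weight $<\delta$, so discarding $\forest$ from $T$ leaves the entries $\geq\delta$ untouched; and the parallel-Kruskal argument, in which after the $<\delta$ phase both partial forests span exactly the components of $H_{<\delta}$ (note $\forest\subseteq H_{<\delta}$ precisely because of the hypothesis on clones), combined with \cref{Prop: Rel_Spanning_Tree_Indexwise_Min} (taking $F=\emptyset$ for the absolute case) to transfer the conclusion from the particular Kruskal outputs to arbitrary relative and absolute minimum spanning trees. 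Two small points, neither a genuine gap: first, $\met{\eh}$ is by definition the set of \emph{distinct} genomes, whereas $\bar G$ has one vertex per organism, so the absolute MST of $\bar G$ may carry extra weight-$0$ edges not reflected in $\lens(\B_0(\eh))$; these are $<\delta$ and vanish under thresholding. Second, your closing remark about the Rips scaling convention is phrased too casually: a genuinely nontrivial rescaling of barcode lengths would \emph{not} commute with thresholding at the fixed value $\delta$ (indeed, under a literal reading of the paper's $\diam\leq 2r$ convention the death times would be half the merge distances, which would falsify the proposition as stated); what saves the argument is that the paper's intended convention, as used in \cref{Sec:Stabilty_Top_Nov_Profile} and in \cref{Thm:Lower_Bound}, equates $\B_0$ death times with merge distances, so the scaling factor is $1$ and no adjustment is needed.
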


This suggests that in some cases, $0^{\mathrm{th}}$ barcodes may be useful in the study of recombination.  However, in cases where the minimum $\delta$ satisfying the condition of \cref{Prop:Novelty_And_0D_Barcodes} is large, or where we only have a subsample of the history, $0^{\mathrm{th}}$ barcodes may not offer useful information.  This, together with the earlier theoretical result of Chan, Rabadan and Carlsson relating recombination to higher persistence barcodes (\cref{Thm:Chan_Trees} below), motivates us to consider the relationship between the topological novelty profile and the higher barcodes of a history.

  In this section, we present our main result relating barcodes to novelty profiles in the galled tree setting (\cref{Thm:Lower_Bound}).  The technical heart of our proof is a topological description of the Vietoris--Rips filtration of an \emph{almost linear} metric space, which we give in \cref{Sec:ALMS}.  Our arguments rely heavily on discrete Morse theory. 

\subsection{Barcodes of Histories indexed by Trees}\label{Sec:Barcodes_From_Trees}
We first review the key result of Chan et al. on the barcodes of histories indexed by trees.  

\begin{definition}[Tree-Like Metric Space]
We call an undirected tree with a non-negative weight function on its edges a \emph{weighted tree.}  A metric space $P$ is called \emph{tree-like} if it is isometric to a subspace of a metric space arising from the shortest-path metric on a weighted tree.
\end{definition}

\begin{proposition}[{\cite[Supplementary Information, Theorem 2.1]{chan2013topology}}]\label{Prop:Tree_Like_MS}
If $P$ is a tree-like metric space, then for all $r\in [0,\infty)$, each component of $\Rips(P)_r$ is contractible.  Hence, $\B_i(P)=\emptyset$ for $i\geq 1$.
\end{proposition}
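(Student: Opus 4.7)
The plan is to prove this by discrete Morse theory (see \cref{Sec:DMT}): construct, for each $r \geq 0$, a discrete gradient vector field on $\Rips(P)_r$ whose critical simplices consist of exactly one vertex per connected component. By \cref{Prop:DMT}(i), each component is then homotopy equivalent to a point, hence contractible, and the vanishing $\B_i(P) = 0$ for $i \geq 1$ follows from the triviality of reduced homology of contractible spaces.

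Concretely, the argument proceeds by induction on $|P|$, reducing at each step via a ``dominated leaf'' argument. Embed $P$ isometrically in a weighted tree $T$, and preprocess by pruning all leaves of $T$ not in $P$ and contracting degree-$2$ Steiner vertices, both operations that preserve the induced metric on $P$. In the typical case there is a leaf $\ell \in P$ whose unique tree-neighbor $q$ also lies in $P$. Because $\ell$ is a leaf, every tree-geodesic from $\ell$ to another point $p \in P$ traverses $q$ first, so
\[d(\ell,p) = d(\ell,q) + d(q,p) \quad\text{for all }p \in P \setminus \{\ell\}.\]
Consequently, whenever $d(\ell,p)\leq 2r$ we also have $d(q,p)\leq 2r$ and $d(\ell,q)\leq 2r$; that is, $q$ dominates $\ell$ in the Rips $1$-skeleton. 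Since $\Rips(P)_r$ is a flag complex, this forces $\sigma \cup \{q\} \in \Rips(P)_r$ for every simplex $\sigma$ containing $\ell$. I would then define a matching $\vfa$ pairing each simplex $\tau$ that contains $\ell$ but not $q$ with $\tau \cup \{q\}$; every simplex containing $\ell$ is matched (as either the small or large end), so the critical simplices are precisely those not containing $\ell$ and form the subcomplex $\Rips(P\setminus\{\ell\})_r$. A short inspection confirms acyclicity---any $\vfa$-path terminates after a single up-down step, since the simplex reached necessarily contains both $\ell$ and $q$ and so can only appear as the large end of a pair---so by \cref{Prop:K_Paths} and \cref{Prop:DMT}(ii), $\Rips(P)_r$ deformation retracts onto $\Rips(P\setminus\{\ell\})_r$, and the inductive hypothesis applies (with the minor caveat that an isolated $\ell$ forms its own contractible component which is carried along).

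The main obstacle I anticipate is the residual case in which every leaf of the preprocessed $T$ has a degree-$\geq 3$ Steiner tree-neighbor (a star-like local structure around each $P$-leaf), so that no single $P$-point globally dominates a leaf. To handle this I would replace the simple leaf-domination matching with a more refined DGVF exploiting the nearest-common-ancestor structure in $T$: pair each simplex $\sigma$ with $\sigma \,\triangle\, \{w(\sigma)\}$ for a distinguished vertex $w(\sigma) \in P$ chosen by a lexicographic rule compatible with the tree order on $P$. The four-point condition characterizing tree metrics is what forces the choice of $w(\sigma)$ to be consistent between paired simplices, but verifying acyclicity and the critical-set count in this setting requires noticeably more bookkeeping---this is the principal technical hurdle of the proof.
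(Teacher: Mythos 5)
Your ``typical case'' is handled correctly: when a leaf $\ell$ of the pruned Steiner tree has its tree-neighbor $q$ in $P$, the identity $d(\ell,p)=d(\ell,q)+d(q,p)$ does give domination, and your matching (the same cone-type matching the paper uses in the proof of \cref{Prop:Barcodes_Of_1_Point_Unions}) is acyclic with critical complex $\Rips(P\setminus\{\ell\})_r$. The problem is that your ``residual case'' is not residual --- it is the generic case, and it is left unresolved. In the standard phylogenetic situation, $P$ consists precisely of the leaves of a binary Steiner tree all of whose internal vertices have degree $3$ and lie outside $P$; after your preprocessing, \emph{no} leaf of $T$ has its tree-neighbor in $P$, so your induction never takes a first step. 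Already for a tripod (three points of $P$ hanging off a single Steiner center) there is no pair $\ell,q\in P$ satisfying the global domination identity. The proposed repair --- pairing $\sigma$ with $\sigma\,\triangle\,\{w(\sigma)\}$ for a lexicographically chosen $w(\sigma)$ --- is a plan, not a proof: no rule is specified, and acyclicity, which you yourself identify as the principal hurdle, is exactly what fails for naive choices. As written, the argument establishes \cref{Prop:Tree_Like_MS} only for tree-like spaces that happen to contain suitable internal points.

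The missing idea is to let the dominating vertex depend on the scale $r$ rather than on the tree alone. In the geometric realization of the weighted tree, $d(u,v)\leq 2r$ if and only if the balls $B(u,r)$ and $B(v,r)$ intersect; balls in a tree are subtrees, and intersection graphs of subtrees of a tree are chordal (Gavril), so the $1$-skeleton of $\Rips(P)_r$ is a chordal graph at \emph{every} scale. By Dirac's theorem each component with at least two vertices contains a simplicial vertex $\ell$, and a simplicial vertex is dominated by any of its neighbors $q$ --- your matching then applies verbatim, removing $\ell$ preserves tree-likeness, and induction yields collapsibility of every component at every scale (which is all the proposition asks, since it makes no claim relating different scales). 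Note also that even this corrected argument would differ from the paper's: the paper does not reprove the statement but cites \cite{chan2013topology}, whose argument is a covering/Mayer--Vietoris one, upgraded to the contractibility statement via the nerve theorem \cite{hatcher2002algebraic}. A completed discrete Morse proof along your lines would be a legitimately different and rather elementary alternative; the version you wrote, however, has its hole exactly where the main content lies.
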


In \cite{chan2013topology}, only the part of \cref{Prop:Tree_Like_MS} about triviality of barcodes is stated, and not the stronger contractibility result.  However, the contractibility result follows immediately from the proof given in \cite{chan2013topology}, using the nerve theorem \cite[Chapter 4.G]{hatcher2002algebraic} in place of a Mayer-Vietoris argument.

The following result, due to Chan et al., makes precise the idea that for $i\geq 1$, a non-empty barcode $\B_i(S)$ serves as a certificate that recombination is present in the history from which $S$ was sampled.

\begin{theorem}[\protect\cite{chan2013topology}]
\label{Thm:Chan_Trees}
If $G$ is a tree, $\eh$ is a history indexed by $G$, and $S\subseteq \eh$, then $\B_i(S)=\emptyset$ for $i\geq 1$.  
\end{theorem}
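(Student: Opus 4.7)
The plan is to reduce to \cref{Prop:Tree_Like_MS} by showing that $\met{\eh}$ is a tree-like metric space whenever $G$ is a tree, and then to observe that tree-likeness passes to subspaces. Once $S$ is tree-like, the vanishing $\B_i(S) = \emptyset$ for $i \geq 1$ is immediate.

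To show $\met{\eh}$ is tree-like, I first unpack the infinite sites assumption (conditions~1 and~2 of \cref{Def:History}) in the special case that $G$ has no recombinants. Every non-root vertex is then a clone with a single parent, so condition~1 forces $\eh_v \subseteq \eh_w$ whenever $v$ is an ancestor of $w$. Condition~2 produces, for each mutation $m$ appearing in the history, a unique ``birth vertex'' $v_m$, and together with condition~1 this yields the clean characterization that $m \in \eh_x$ if and only if $v_m$ lies on the unique directed path from the root to $x$.

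Next I turn $G$ into a weighted (undirected) tree $T$ by assigning to each edge between a parent $v$ and a child $w$ the weight $|\eh_w \setminus \eh_v|$, i.e., the number of mutations born at $w$. The main computation then shows that for any two vertices $v, w$ with lowest common ancestor $u$, the shortest-path distance from $v$ to $w$ in $T$ equals $d(\eh_v, \eh_w)$. The key step is the identity $\eh_v \setminus \eh_w = \eh_v \setminus \eh_u$: any $m \in \eh_v \setminus \eh_u$ has birth vertex $v_m$ strictly on the $u$-to-$v$ segment, hence outside the subtree containing $w$, so $m \notin \eh_w$; the reverse inclusion is immediate from $\eh_u \subseteq \eh_w$. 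Summing edge weights along the path from $v$ up to $u$ and back down to $w$ then reproduces $|\eh_v \setminus \eh_u| + |\eh_w \setminus \eh_u| = d(\eh_v, \eh_w)$. Sending each element of $\met{\eh}$ to any vertex of $G$ realizing it therefore defines an isometric embedding into the shortest-path metric of $T$, so $\met{\eh}$ is tree-like.

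Finally, any subspace $S \subseteq \met{\eh}$ embeds isometrically into the same weighted tree $T$, so $S$ is itself tree-like, and \cref{Prop:Tree_Like_MS} yields $\B_i(S) = \emptyset$ for $i \geq 1$. The main obstacle is essentially bookkeeping in the distance computation; conceptually, everything rests on the infinite sites assumption forcing each mutation to be supported on a subtree of $G$, which is exactly what makes the symmetric-difference metric pull back from a shortest-path tree metric.
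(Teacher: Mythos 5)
Your proposal is correct and follows exactly the paper's route: the paper's proof simply asserts that $\met{S}$ is ``easily seen'' to be tree-like and then invokes \cref{Prop:Tree_Like_MS}. Your argument supplies the details behind that assertion---the birth-vertex characterization of mutations from the infinite sites assumption, the edge weights $|\eh_w \setminus \eh_v|$, and the lowest-common-ancestor distance computation---all of which check out.
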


\begin{proof}
If $S$ is a subset of a history indexed by a tree, then $\met{S}$ is easily seen to be tree-like.  Hence, the result follows from \cref{Prop:Tree_Like_MS}.
\end{proof}

\begin{remark}\label{Rem:Homoplasies2}
In the absence of recombination, homoplasies (recurrent mutations that violate the infinite sites assumption) can lead to a metric space that is not tree-like.  However, as indicated in \cref{Rem:Homoplasies}, a small number of homoplasies causes a correspondingly small deviation from tree-likeness (with respect to Gromov-Hausdorff distance).  A single recombination event, on the other hand, can yield a metric space that is arbitrarily far from a tree-like one.
\end{remark}

\subsection{Metric Decomposition of an Evolutionary History}\label{Sec:Met_Decomposition}
Define a \emph{based metric space} simply to be a metric space $P$, together with a choice of basepoint $p\in P$.

\begin{definition}[Sum of Based Metric Spaces]
For based metric spaces $P$ and $Q$ with basepoints $p\in P$, $q\in Q$, we regard the wedge sum $P\vee Q$ as a metric space, with the metric given by 
\[d_{P\vee Q}(x,y)=
\begin{cases}
d_P(x,y) & \textup{if $x,y\in P$},\\
d_Q(x,y) & \textup{if $x,y\in Q$},\\
d_P(x,p)+d_Q(q,y) & \textup{if $x\in P$, $y\in Q$}.
\end{cases}
\]
\end{definition}

For based metric spaces $P$ and $Q$, let $\Rips(P)\vee\Rips(Q)$ denote the \emph{wedge sum filtration}, given by \[(\Rips(P)\vee\Rips(Q))_r:=\Rips(P)_r\vee \Rips(Q)_r.\]

\begin{proposition}\label{Prop:Barcodes_Of_1_Point_Unions}
For finite based metric spaces $P$ and $Q$, the inclusion \[\Rips(P)\vee \Rips(Q)\hookrightarrow \Rips(P\vee Q)\]
is an objectwise homotopy equivalence.  In particular, for any $i\geq 0$, \[\B_i(P\vee Q)=\B_i(P)\cup \B_i(Q).\]
\end{proposition}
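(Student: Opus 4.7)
The plan is to work scale by scale via discrete Morse theory. For each $r\ge 0$, I will build a DGVF $\vfa$ on $\Rips(P\vee Q)_r$ whose critical simplices form exactly the subcomplex $\Rips(P)_r\vee\Rips(Q)_r$; Proposition~\ref{Prop:DMT}(ii) then gives a deformation retract at each scale, hence an objectwise homotopy equivalence of filtrations. The barcode decomposition $\B_i(P\vee Q)=\B_i(P)\cup\B_i(Q)$ will follow from Lemma~\ref{lem:ObjectwiseHE}, Theorem~\ref{Thm:ordinary_stability}, and the standard isomorphism $H_i(X\vee Y)\cong H_i(X)\oplus H_i(Y)$ for reduced homology of wedges, applied pointwise in $r$ and naturally in the inclusion maps.

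Let $p$ denote the common basepoint of $P\vee Q$. Call a simplex of $\Rips(P\vee Q)_r$ \emph{mixed} if it contains vertices in both $P\setminus\{p\}$ and $Q\setminus\{p\}$, and \emph{pure} otherwise. Define $\vfa$ to pair every mixed $\tau$ with $p\notin\tau$ to $\tau\cup\{p\}$. The main anticipated obstacle is checking that this matching is well-defined, i.e., that $\tau\cup\{p\}\in\Rips(P\vee Q)_r$. The key observation is that since $\tau$ is mixed, for every $x\in\tau$ we may choose $y\in\tau$ on the opposite side of the wedge from $x$, and the wedge metric formula gives
\[
d(x,p)\;\le\;d(x,p)+d(p,y)\;=\;d(x,y)\;\le\;2r,
\]
so all new pairwise distances in $\tau\cup\{p\}$ are at most $2r$. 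Granting this, every mixed simplex is matched (those with $p\notin\tau$ matched upward, and each mixed $\sigma\ni p$ matched downward to $\sigma\setminus\{p\}$, which remains mixed). Hence the critical simplices of $\vfa$ are precisely the pure ones, and they visibly form the subcomplex $\Rips(P)_r\vee\Rips(Q)_r$.

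Finally, I verify acyclicity via Proposition~\ref{Prop:K_Paths} by excluding non-trivial closed $\vfa$-paths. In any such path $\sigma_0,\tau_0,\sigma_1,\tau_1,\ldots,\sigma_m,\tau_m,\sigma_{m+1}$, each $\sigma_j$ with $j\le m$ is matched in $\vfa$ as $\sigma_j\prec\tau_j=\sigma_j\cup\{p\}$, so $\sigma_j$ is mixed and does not contain $p$. However, $\sigma_{j+1}$ is a codimension-$1$ face of $\tau_j$ distinct from $\sigma_j$, so it is obtained by removing a vertex of $\tau_j$ other than $p$ (otherwise $\sigma_{j+1}=\sigma_j$), which forces $p\in\sigma_{j+1}$. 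For $j<m$ this contradicts the requirement that $\sigma_{j+1}$ be mixed without $p$, ruling out any path of length $\ge 1$; in the length-$0$ case, closedness $\sigma_0=\sigma_1$ is impossible because only $\sigma_1$ contains $p$. Thus $\vfa$ is a DGVF, and the outline above completes the proof.
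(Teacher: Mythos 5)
Your proof is correct and follows essentially the same route as the paper's: the paper defines the identical DGVF matching each mixed simplex $\sigma$ not containing the basepoint to $\{p\}\cup\sigma$, invokes \cref{Prop:DMT}\,(ii) for the objectwise deformation retraction, and deduces the barcode statement from \cref{lem:ObjectwiseHE} together with the natural wedge isomorphism in reduced homology. The only difference is that you spell out the well-definedness (via the wedge metric inequality) and the acyclicity check that the paper dismisses as ``clear,'' and both of your verifications are correct.
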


\begin{proof}
We give a proof using discrete Morse theory.  For $r\in [0,\infty)$, if $\sigma$ is a simplex in $\Rips(P\vee Q)_r$ containing vertices in both $P$ and $Q$ but not the common vertex $p=q$, then the simplex $\{p=q\}\cup \sigma$ is clearly also in $\Rips(P\vee Q)_r$.  We define a DGVF on $\Rips(P\vee Q)_r$ by matching each such simplex $\sigma$ to $\{p=q\}\cup \sigma$.  It is clear that this matching is acyclic, hence indeed gives a well-defined DGVF whose set of critical simplices is $(\Rips(P)\vee \Rips(Q))_r$.  Thus, by \cref{Prop:DMT}\,(ii), the inclusion \[(\Rips(P)\vee \Rips(Q))_r\to \Rips(P\vee Q)_r\]  is a homotopy equivalence.  

To check that \[\B_i(P\vee Q)=\B_i(P)\cup \B_i(Q),\] note that by \cref{lem:ObjectwiseHE}, \[\B_i(P\vee Q)=\B_i(\Rips(P)\vee\Rips(Q)),\]  so it suffices to check that \[\B_i(\Rips(P)\vee\Rips(Q))=\B_i(P)\cup \B_i(Q).\] 
A standard result on the homology of wedge sums of topological spaces \cite[Corollary 2.25]{hatcher2002algebraic} furnishes isomorphisms of vector spaces \[\tilde H_i(\Rips(P)\vee\Rips(Q))_r\to \tilde H_i(\Rips(P))_r \oplus \tilde H_i(\Rips(Q))_r\] for each $r\in [0,\infty)$, and these isomorphisms are natural, i.e., they  assemble into an isomorphism of persistence modules \[\tilde H_i(\Rips(P)\vee\Rips(Q))\to \tilde H_i(\Rips(P)) \oplus \tilde H_i(\Rips(Q)).\] This  implies that \[\B_i(\Rips(P)\vee\Rips(Q))=\B_i(P)\cup \B_i(Q).\qedhere\]
\end{proof}

\begin{remark}
\cref{Prop:Barcodes_Of_1_Point_Unions} has a category-theoretic interpretation: It says that reduced persistent homology commutes with coproducts in the categories of based metric spaces and persistence modules, where morphisms of metric spaces are 1-Lipschitz maps sending basepoint to basepoint.
\end{remark}

\begin{remark}
\cref{Prop:Barcodes_Of_1_Point_Unions} has also been discovered independently by the authors of \cite{adamaszek2017vietorisGluing}.  Their work also establishes the result for infinite based metric spaces and for \v Cech filtrations.
\end{remark}

We leave the easy verification of the following to the reader:
\begin{proposition}\label{Prop:Decomposing_Evolutionary_Graphs}
Suppose $G$ is a phylogenetic graph with $G=G^1\grSum G^2$ for subgraphs $G^1,G^2\subseteq G$, $\eh$ is a history indexed by $G$, and $\eh^1$ and $\eh^2$ are the respective restrictions of $\eh$ to $G^1$ and $G^2$.
Then \[\met{\eh}\cong \met{\eh^1}\vee\met{\eh^2}.\]
\end{proposition}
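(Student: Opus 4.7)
The target is an isometry $\met{\eh^1}\vee\met{\eh^2}\cong\met{\eh}$ of based metric spaces with common basepoint $\eh_u$, where $u$ denotes the vertex of $G^1$ (a source, by the definition of the sum) that has been identified with a vertex of $G^2$. The natural candidate map sends each $\eh_v$ to itself. It restricts to an isometric embedding on each summand by the very definition of the symmetric difference metric, so the only nontrivial content is the triangle identity
\begin{equation*}
d(\eh_a,\eh_b) \;=\; d(\eh_a,\eh_u) + d(\eh_u,\eh_b)\qquad (a\in V^1,\ b\in V^2).
\end{equation*}
Because $\eh_a\triangle\eh_b = (\eh_a\triangle\eh_u)\,\triangle\,(\eh_u\triangle\eh_b)$ as an identity of sets, the triangle identity is equivalent to showing $(\eh_a\triangle\eh_u)\cap(\eh_u\triangle\eh_b)=\emptyset$, and a mutation $m$ in this intersection falls into exactly one of two cases to be ruled out.

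My plan is to first assemble two structural lemmas. The \emph{cut-vertex lemma}: the edges of $G^1\vee G^2$ partition into $G^1$-edges and $G^2$-edges whose unique common endpoint is $u$, and $u$ has no $G^1$-in-edges, so every directed path in $G$ connecting $V^1\setminus\{u\}$ to $V^2\setminus\{u\}$ must pass through $u$; moreover, uniqueness of the root forces every vertex of $V^1$ to equal $u$ or to be a descendant of $u$ in $G^1$ (any other $G^1$-source would inherit no $G^2$-in-edges and so would be a second root of $G$). The \emph{$A_m$-path lemma}, with $A_m:=\{v:m\in\eh_v\}$: any $v\in A_m\setminus\{v_m\}$ has at least one parent in $A_m$---clones inherit $m$ from their unique parent by property 1 of \cref{Def:History}, and recombinants satisfy $\eh_v\subseteq\eh_{p_1}\cup\eh_{p_2}$ by property 3---so iterating yields a directed path from the minimum $v_m$ of property 2 to $v$ lying entirely in $A_m$. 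With these in hand, \emph{Case 1} ($m\in\eh_a\cap\eh_b$, $m\notin\eh_u$) is immediate: the minimum $v_m$ is not $u$, hence lies in $V^1\setminus\{u\}$ or in $V^2\setminus\{u\}$, and the $A_m$-path from $v_m$ to whichever of $a,b$ sits on the opposite side of the sum decomposition must cross $u$ by the cut-vertex lemma, contradicting $u\notin A_m$.

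For \emph{Case 2} ($m\in\eh_u$, $m\notin\eh_a$), I would prove a \emph{propagation lemma}: $m\in\eh_u$ implies $m\in\eh_v$ for every $v\in V^1$. The argument is a straightforward induction along a topological order of $G^1$, whose unique source is $u$; the key observation is that each parent of a vertex in $V^1\setminus\{u\}$ again lies in $V^1$ (since $G^2$-edges never terminate outside $V^2$), so at each step a clone inherits $m$ from its parent by property 1 and a recombinant inherits $m$ via $\{m\}\subseteq\eh_{p_1}\cap\eh_{p_2}\subseteq\eh_v$ by property 3. This directly contradicts $m\notin\eh_a$. To finish, the candidate map is a bijection on underlying point sets: should $\eh_a=\eh_b$ for $a\in V^1\setminus\{u\}$ and $b\in V^2\setminus\{u\}$, the triangle identity forces $\eh_a=\eh_u=\eh_b$, so both already coincide with the basepoint and are identified in the wedge. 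The main obstacle is Case 1, which is the only place where the global minimum structure of property 2 interacts nontrivially with the cut-vertex structure of the sum; Case 2 and the bijection check reduce to bookkeeping once the structural lemmas are in place.
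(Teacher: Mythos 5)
Your overall architecture is sound, and since the paper itself leaves this verification to the reader, there is no official proof to compare against; the reduction of the wedge isometry to the cross-term identity $d(\eh_a,\eh_b)=d(\eh_a,\eh_u)+d(\eh_u,\eh_b)$, the cut-vertex lemma, the propagation induction in Case 2, and the bijectivity check via the triangle identity are all correct. But there is one genuine gap, in the clone case of your $A_m$-path lemma. You justify ``any $v\in A_m\setminus\{v_m\}$ has at least one parent in $A_m$'' for clones by appealing to property 1 of \cref{Def:History}, but property 1 runs the other way: it says $\eh_p\subseteq\eh_v$ when $v$ is a clone with parent $p$, and places no constraint at all on $\eh_v\setminus\eh_p$. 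A clone acquiring mutations absent from its parent is the \emph{only} mechanism by which mutations arise in this model (e.g., the clone labelled $\{\au\}$ below the root $\{\}$ in \cref{Fig:Population}), so ``the clone inherited $m$ from its parent'' is precisely what must be proved when $v\neq v_m$, and it cannot be proved from property 1 alone --- it requires property 2, which in your write-up is used only to produce $v_m$. In a model satisfying properties 1 and 3 but not 2, your lemma as justified would simply be false.

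The repair is a short induction that you should make explicit. Let $D$ be the set of vertices $w$ such that every directed path from the root to $w$ contains $v_m$. Minimality of $v_m$ (property 2) gives $A_m\subseteq D$. If $w\in D\setminus\{v_m\}$, then \emph{every} parent of $w$ lies in $D$ (a root-path to a parent avoiding $v_m$ would extend along the edge to $w$, contradicting $w\in D$), and $w$ has at least one parent, since the root belongs to $D$ only if it equals $v_m$. Induction along a topological order, using property 1 for clones and the containment $\eh_{p_1}\cap\eh_{p_2}\subseteq\eh_w$ of property 3 for recombinants, then gives $D\subseteq A_m$. Hence $A_m=D$, so every vertex of $A_m\setminus\{v_m\}$ has all (not merely some) of its parents in $A_m$; iterating parents now terminates at $v_m$ and yields your path lemma, and with it Case 1. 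With this one step supplied, your proof is complete.
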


\begin{theorem}\label{Prop:Decomposable_Populations}
Suppose a galled tree $G$ is an iterated sum of source-sink loops $G^1,\ldots,G^k\subset G$ and rooted trees $G^{k+1},\ldots,G^{l}\subset G$, and that $\eh$ is a history indexed by $G$.  Let $\eh^j$ denote the restriction of $\eh$ to $G^j$.  
\begin{enumerate}
\item[(i)] There is an objectwise homotopy equivalence from an iterated wedge sum of the filtrations $\Rips(\eh^j)$ to $\Rips(\eh)$. 
\item[(ii)] For $i\geq 1$,
\[\B_i(\eh)=\bigcup_{j=1}^k \B_i(\eh^j).\]
\end{enumerate}
\end{theorem}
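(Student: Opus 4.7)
The plan is to proceed by induction on $l$, the number of summands in the galled tree decomposition $G=G^1\vee\cdots\vee G^l$, using \cref{Prop:Decomposing_Evolutionary_Graphs} and \cref{Prop:Barcodes_Of_1_Point_Unions} as the key ingredients, and invoking \cref{Prop:Tree_Like_MS} at the end to discard the rooted tree summands from the union in part (ii).

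For the base case $l=1$, part (i) is vacuous (the ``iterated wedge sum'' is just $\Rips(\eh^1)$), and part (ii) either follows from the base case of (i) (if $G^1$ is a loop, so $k=1$) or follows directly from \cref{Prop:Tree_Like_MS} (if $G^1$ is a rooted tree, so $k=0$, and the asserted union is empty). For the inductive step, write $G=G'\vee_{v,w} G^l$, where $G'$ is the iterated sum of $G^1,\ldots,G^{l-1}$, and let $\eh'$ denote the restriction of $\eh$ to $G'$. By \cref{Prop:Decomposing_Evolutionary_Graphs}, there is an isometry $\met{\eh}\cong\met{\eh'}\vee\met{\eh^l}$ of based metric spaces, with basepoint the shared vertex. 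By \cref{Prop:Barcodes_Of_1_Point_Unions}, the inclusion $\Rips(\eh')\vee\Rips(\eh^l)\hookrightarrow \Rips(\met{\eh'}\vee\met{\eh^l})\cong\Rips(\eh)$ is an objectwise homotopy equivalence. The inductive hypothesis gives an objectwise homotopy equivalence from an iterated wedge sum of the $\Rips(\eh^j)$'s with $j\leq l-1$ to $\Rips(\eh')$. Since the wedge sum of filtrations, applied in parallel at each scale, preserves objectwise homotopy equivalences (one simply wedges the two homotopy equivalences at each $r$), we may combine these to obtain (i).

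For part (ii), apply \cref{lem:ObjectwiseHE} to the objectwise homotopy equivalence of part (i): this gives an isomorphism of persistence modules in each degree $i$. Iterating the second assertion of \cref{Prop:Barcodes_Of_1_Point_Unions} then yields
\[
\B_i(\eh)=\bigcup_{j=1}^{l}\B_i(\eh^j).
\]
Finally, for any $j\in\{k+1,\ldots,l\}$, the subgraph $G^j$ is a rooted tree, so $\met{\eh^j}$ is tree-like and \cref{Prop:Tree_Like_MS} (or equivalently \cref{Thm:Chan_Trees}) gives $\B_i(\eh^j)=\emptyset$ for $i\geq 1$. Dropping these empty terms from the union yields (ii).

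The main obstacle is really just careful bookkeeping: the wedge sum of based metric spaces depends on the basepoint, and ``the'' iterated wedge sum in the statement of (i) is only determined once we fix, at each step of the decomposition $G=G^1\vee\cdots\vee G^l$, the source vertex at which the next summand is attached. Once the galled tree decomposition is pinned down, however, \cref{Prop:Decomposing_Evolutionary_Graphs} hands us the corresponding basepoints on the metric side, and the induction goes through without further incident. No new topology beyond \cref{Prop:Barcodes_Of_1_Point_Unions,Prop:Tree_Like_MS} is needed.
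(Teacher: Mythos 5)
Your proof is correct and follows exactly the route the paper takes: its proof of this theorem is simply the two-line observation that (i) follows from \cref{Prop:Decomposing_Evolutionary_Graphs,Prop:Barcodes_Of_1_Point_Unions} and (ii) from (i) together with \cref{Prop:Tree_Like_MS}, and your induction on the number of summands (with the basepoint bookkeeping and the appeal to \cref{lem:ObjectwiseHE}) is just the careful expansion of that same argument.
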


\begin{proof}
(i) follows from \cref{Prop:Decomposing_Evolutionary_Graphs,Prop:Barcodes_Of_1_Point_Unions}. (ii) follows from (i) and \cref{Prop:Tree_Like_MS}.
\end{proof}

\subsection{Vietoris--Rips Filtrations of Almost Linear Metric Spaces}\label{Sec:ALMS}
As mentioned in the introduction, we say a non-empty finite metric space $P$ is \emph{almost linear} if there is a point $p\in P$ such that $P\setminus \{p\}$ is isometric to a finite subset of $\R$.  We call any such point $p$ \emph{a distinguished point}.  See \cref{Fig:AL_Metric_Space}.

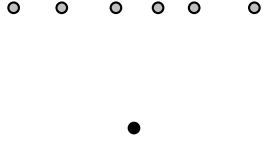
\begin{figure}[h]
\begin{center}
\begin{tikzpicture}[thick,scale=.8,
                   shorten >=2pt+0.5*\pgflinewidth,
                   shorten <=2pt+0.5*\pgflinewidth,
                   every node/.style={circle,
                                      draw,
                                      fill          = black!50,
                                      inner sep     = 0pt,
                                      minimum width =4 pt}]
                                      \tikzset{
    every node/.style={
        circle,
        draw,
        fill          = gray!50,
        inner sep     = 0pt,
        minimum width =4 pt
    }   
}  
      \coordinate (ghost) at (0,.5);
       \coordinate (a) at (0,0);
       \coordinate (b) at (.8,0);
       \coordinate (c) at (1.7,0);
        \coordinate (d) at (2.4,0);
        \coordinate (e) at (3,0);
        \coordinate (f) at (4,0);
        \coordinate (p) at (2,-2);
          
\path[draw] 

       
       node[color=white] at (ghost) {}
       node at (a) {} 
       node at  (b) {}
       node at  (c) {} 
       node at  (d) {}
       node at  (e) {}
       node at  (f) {}
       node[fill=black] at  (p) {};
\end{tikzpicture}
\end{center}
\caption{An almost linear metric space embedded in $\R^2$.  The unique distinguished point is shown in solid black.  Note that not all almost linear metric spaces can be embedded in $\R^2$.}
\label{Fig:AL_Metric_Space}
\end{figure}

\begin{proposition}\label{Lem:Loop_Almost_Linear}
If $\eh$ is a history indexed by a source-sink loop, then $\met{\eh}$ is almost linear.
\end{proposition}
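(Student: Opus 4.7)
The plan is to exhibit an explicit isometric embedding of $\met{\eh}$ minus a single point into $\R$, with the missing point being the image of the recombinant.

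First I would set up notation. Since the underlying undirected graph of $G$ is a loop and $G$ has a unique source $r$ and unique sink $t$, $G$ consists of exactly two internally disjoint directed paths $r = v_0 \to v_1 \to \cdots \to v_a = t$ and $r = w_0 \to w_1 \to \cdots \to w_b = t$. The vertex $t$ is the unique recombinant, and every other vertex is a clone (or the root). Iterating property 1 of Definition \ref{Def:History} along each path yields nested chains $\eh_r \subseteq \eh_{v_1} \subseteq \cdots \subseteq \eh_{v_{a-1}}$ and $\eh_r \subseteq \eh_{w_1} \subseteq \cdots \subseteq \eh_{w_{b-1}}$. Write $A_i := \eh_{v_i} \setminus \eh_r$ and $B_j := \eh_{w_j} \setminus \eh_r$.

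The key step is to show that $A_i \cap B_j = \emptyset$ for all $i,j$. Assuming $m \in A_i \cap B_j$ for some $i,j$, property 2 of Definition \ref{Def:History} supplies a minimum vertex $v^*$ of $\{u \in V : m \in \eh_u\}$, and $v^*$ must be an ancestor of (or equal to) both $v_i$ and $w_j$. In a directed loop, the two paths meet only at $r$ and $t$, so the only vertex that is an ancestor of both $v_i$ (with $i \leq a-1$) and $w_j$ (with $j \leq b-1$) is $r$. This forces $v^* = r$, hence $m \in \eh_r$, contradicting $m \in A_i$.

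Given disjointness, I would define $\phi$ from the set of non-recombinant vertices to $\R$ by $\phi(v_i) := |A_i|$ and $\phi(w_j) := -|B_j|$. A direct calculation, using the nestedness of each family and the disjointness between the families (and of both from $\eh_r$), shows that the symmetric difference distances are $\bigl||A_i|-|A_j|\bigr|$, $\bigl||B_i|-|B_j|\bigr|$, and $|A_i|+|B_j|$ respectively, matching $|\phi(\cdot)-\phi(\cdot)|$ in each case. Moreover $\phi$ identifies two vertices exactly when their genomes coincide (using nestedness for the same-side case), so it descends to an isometric embedding of the non-recombinant part of $\met{\eh}$ into $\R$. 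The whole of $\met{\eh}$ is obtained from this linear subspace by adjoining at most one further point, namely $\eh_t$; so $\met{\eh}$ is almost linear with distinguished point $\eh_t$ (and trivially so if $\eh_t$ happens to coincide with some $\eh_{v_i}$ or $\eh_{w_j}$). The step requiring most care is the disjointness argument: it hinges on the fact that in a directed loop $v_i$ and $w_j$ share no common ancestor besides $r$, which is exactly what distinguishes loops from more general recombination patterns and is the reason why almost linearity fails beyond the galled-tree setting.
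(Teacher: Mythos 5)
Your proof is correct and takes essentially the same route as the paper's: remove the unique recombinant (the sink of the directed loop) and observe that the remaining genomes embed isometrically in $\R$. The paper asserts this in a single line, and your write-up correctly supplies the details it leaves implicit --- the nested chains along the two parental paths, the disjointness of $A_i$ and $B_j$ via the minimum-element (infinite sites) condition, and the explicit embedding $\phi(v_i)=|A_i|$, $\phi(w_j)=-|B_j|$.
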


\begin{proof}
Let $p$ be the unique recombinant.  $\met{ \eh\setminus \{\eh_p\}}$ is isometric to a subset of $\R$.  
\end{proof}

In view of \cref{Prop:Decomposable_Populations} and \cref{Lem:Loop_Almost_Linear}, to understand the topology of Vietoris--Rips filtrations of histories indexed by galled trees, it suffices to understand the topology of Vietoris--Rips filtrations of almost linear metric spaces.  We now describe the latter:

\begin{theorem}[Topology of the Vietoris--Rips Filtration of an Almost Linear Metric Space]\label{Thm:ALMS}
Let $P$ be an almost linear metric space with distinguished point $p$. 
\begin{enumerate}[(i)]
\item For each $r\in [0,\infty)$, the connected component $C_r$ of $\Rips(P)_r$ containing $p$ is either contractible or homotopy equivalent to a circle, and each other component of $\Rips(P)_r$ is contractible.  In particular,  $\B_i(P)=0$ for $i\geq 2$.
\item If $C_r$ and $C_{r'}$ are both homotopy equivalent to circles and $r\leq r'$, then the inclusion $C_r\hookrightarrow C_{r'}$ is a homotopy equivalence.  Thus, $\B_1(P)$ has at most one interval.
\item The unique interval of $\B_1(P)$, when it exists, has length at most $d(p,P\setminus \{p\})$ and is contained in the interval \[\left[d(p,P\setminus \{p\}),\mathrm{diameter}(P\setminus\{p\})/2)\right).\]
\end{enumerate}
\end{theorem}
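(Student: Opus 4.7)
The plan is to analyze the component $C_r$ of $\Rips(P)_r$ containing $p$ by a pushout decomposition. Write $L := P \setminus \{p\}$, viewed as a finite subset of $\mathbb R$, and $N_r := \{x \in L : d(p,x) \leq 2r\}$. First I would observe that any component of $\Rips(P)_r$ not containing $p$ is a component of $\Rips(L)_r$, hence contractible by \cref{Prop:Tree_Like_MS} ($L$ being trivially tree-like as a subset of $\mathbb R$). For $C_r$, I would use the identification
\[
C_r \;=\; \operatorname{cone}_p\!\bigl(\Rips(N_r)_r\bigr) \;\cup_{\Rips(N_r)_r}\; \bigsqcup_{i=1}^{t} K'_i,
\]
where $K'_1, \ldots, K'_t$ are the components of $\Rips(L)_r$ that meet $N_r$ and $\operatorname{cone}_p$ denotes the simplicial cone with apex $p$. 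Every piece of this pushout is contractible: the cone trivially, each $K'_i$ by \cref{Prop:Tree_Like_MS}, and each of the $q$ components of $\Rips(N_r)_r$ likewise. A standard homotopy pushout / Mayer--Vietoris argument then identifies $C_r$, up to homotopy, with the connected graph on $1 + t$ vertices (a central vertex plus one per $K'_i$) and $q$ edges (one per component of $\Rips(N_r)_r$). Hence $H_i(C_r) = 0$ for $i \geq 2$, and $\dim H_1(C_r) = q - t$.

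To finish (i), the bound $q - t \leq 1$ follows from the triangle inequality applied to $p$: any $x, y \in N_r$ satisfy $|x - y| \leq d(p,x) + d(p,y) \leq 4r$, so $N_r$ lies in a real interval of length at most $4r$. A short packing argument on intervals then shows (a) at most two components of $\Rips(N_r)_r$ can fit inside a single $K'_i$, since three would require a span exceeding $4r$; and (b) at most one $K'_i$ can contain two components of $\Rips(N_r)_r$, since two such $K'_i$'s would force $N_r$ to span more than $6r$. Hence $q - t = \sum_i (q_i - 1) \leq 1$, proving (i).

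For (ii) and (iii), the pushout description provides an explicit representative of the generator of $H_1(C_r)$ when $q - t = 1$: there is a unique $K'_1$ containing two components $M_1, M_2$ of $\Rips(N_r)_r$, and the generating cycle is $p \to x_a \to (\text{chain in } K'_1) \to x_b \to p$ for $x_a \in M_1$, $x_b \in M_2$. For (ii), I would track this cycle through the inclusion $C_r \hookrightarrow C_{r'}$ and the induced map on the associated graphs $\Gamma_r \to \Gamma_{r'}$, showing that its image is non-zero on the unique non-tree edge of $\Gamma_{r'}$ and hence generates $H_1(C_{r'})$; by rank considerations the map on $H_1$ is then an isomorphism, so $\B_1(P)$ has at most one interval. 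For (iii), direct inspection of this cycle yields the endpoint bounds: the edges $p \to x_a$ and $p \to x_b$ cannot exist until $r \geq d(p,L)/2$, while the cycle is filled by $2$-simplices containing $p$ once $N_r$ becomes a single $\Rips(N_r)_r$-component, which must happen by $r = \operatorname{diam}(L)/2$ (at which point $L$ itself is one $2r$-clique and $C_r$ collapses to a simplex). The length bound $\leq d(p,L)$ then follows by combining the birth and death estimates.

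The main obstacle is step (ii). Although $\dim H_1(C_r) \leq 1$ at every $r$ prevents two bars from being simultaneously active, it does not a priori rule out an interval dying and a later one being born. Ruling this out requires careful combinatorial tracking of how the distinguished $K'_1$ and its two components $M_1, M_2$ evolve as $r$ grows: the $K'_i$'s may merge, new vertices may enter $N_r$, and components of $\Rips(N_r)_r$ may merge, all of which can alter the identity of the distinguished $K'_1$ while (we must show) preserving the image of the cycle class up to sign.
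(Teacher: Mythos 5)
Your part (i) is correct and takes a genuinely different route from the paper. Where the paper collapses $\Rips(P)_r$ via discrete Morse theory (Kahle's gradient field $\vfa$, refined to $\vfb$, in \cref{Lem:Wedge_Of_Circles}), you decompose $C_r$ as the cone on $\Rips(N_r)_r$ glued to the components $K'_i$ of $\Rips(L)_r$ along $\Rips(N_r)_r$; this is valid because Vietoris-Rips complexes are clique complexes, so every simplex containing $p$ is the cone on a simplex of $\Rips(N_r)_r$, and all pieces are contractible by \cref{Prop:Tree_Like_MS}. Your homotopy-pushout reduction to the connected graph with $1+t$ vertices and $q$ edges, giving $\dim H_1(C_r)=q-t$ and vanishing higher homology, is sound, and your counting can even be simplified: since $N_r$ spans at most $4r$ while consecutive components of $\Rips(N_r)_r$ are separated by gaps exceeding $2r$, one gets $q\leq 2$ outright, so $q-t\leq 1$ without the two-case packing argument. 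Both proofs ultimately rest on the same geometric fact (the triangle inequality through $p$ confines $N_r$ to an interval of length $4r$); what the paper's heavier DMT machinery buys is an explicit canonical generator $\gamma_\sigma$ of $\pi_1(C_r,p)$ with marked endpoints, and that is precisely the ingredient that drives (ii) and (iii).

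For (ii) you have correctly located, but not closed, the essential gap: knowing $\dim H_1(C_r)\leq 1$ for every $r$ does not prevent one bar from dying and another being born later, and your proposal only describes the bookkeeping that would rule this out. The paper closes it as follows: its generators at scales $r\leq r'$ come with endpoints $a<c$ and $a'<c'$, the three a priori configurations $c\leq a'$, $c'\leq a$, and $a\leq a'<c'\leq c$ are cut down to the nested one by the mixed-scale triangle-inequality computation $d(a,c)+d(a',c')\leq d(a,c')\leq 2(r+r')$, and in the nested case the $2$-simplices $[p,a,a']$ and $[p,c',c]$ exhibit an explicit homotopy carrying the image of one generator to the other. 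You could port exactly this trichotomy into your framework by taking $x_a$, $x_b$ to be suitable extremal points of $M_1$, $M_2$, but as written (ii) is an unproven step, and it is the heart of the claim that $\B_1(P)$ has at most one interval.

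In (iii) there is an actual error, not just an omission. Your birth and death estimates place the interval inside $[d(p,L)/2,\ \diam(L)/2)$, and combining them yields only $\len \leq (\diam(L)-d(p,L))/2$, which can be arbitrarily larger than $d(p,L)$ (take $p$ very close to a long configuration $L$); so the stated length bound does not follow from the endpoint estimates. The paper obtains it by a separate, global argument: $L$ is tree-like, so $\B_1(L)=\emptyset$ by \cref{Prop:Tree_Like_MS}, and \cref{Thm:SubspaceStability} applied to $L\subset P$ with $d_H(P,L)=d(p,L)$ caps the longest bar of $\B_1(P)$ at length $d(p,L)$; some such input is needed. (As an aside, your birth bound $d(p,L)/2$, rather than $d(p,L)$, is what the geometry actually yields under the paper's $\diam\leq 2r$ convention --- compare the bar $[\frac{1}{2},1)$ computed in \cref{Ex:Cycles_Not_Independent}, where $d(p,L)=1$ --- so the left endpoint printed in the statement of (iii) appears to carry a factor-of-two slip; your estimate is the defensible one, but be aware it does not match the statement as given.)
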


\begin{remark}
Together, \cref{Prop:Decomposable_Populations}\,(i), \cref{Lem:Loop_Almost_Linear}, and \cref{Thm:ALMS}\,(i) tell us that for $\eh$ a history indexed by a galled tree $G$ and $r\in [0,\infty)$, each component of $\Rips(\eh)_r$ is homotopy equivalent to a bouquet of circles.  
\end{remark}

\begin{remark}\label{Rem:AlmostTreeLike}
In analogy with the definition of an almost linear metric space, we can define an \emph{almost tree-like} metric space to be one obtained from a tree-like metric space by adding a single point.  In an earlier version of this paper, we conjectured that \cref{Thm:ALMS} also holds for almost tree-like metric spaces, but Matthew Zaremsky showed us the following simple counterexample: Let $d$ be the metric on $\{a,b,c,z\}$ given by \[d(a,z)=d(b,z)=d(c,z)=1,\quad d(a,b)=d(a,c)=d(b,c)=2.\]  This metric is tree-like; we can take the tree to be the star centered at $z$.  We extend $d$ to a metric on $\{a,b,c,z,p\}$ by taking $d(p,z)=2$ and $d(p,a)=d(p,b)=d(p,c)=1$.  The resulting metric space $P$ is almost tree-like, but $\B_1(P)$ consists of two intervals.   Thus, \cref{Thm:ALMS}\,(ii) does not extend to almost tree-like metric spaces.
\end{remark}

We build up to the proof of \cref{Thm:ALMS} with several definitions and lemmas.  In what follows, let $P$ be an almost linear metric space with distinguished point $p$, and let $r\in [0,\infty)$ be such that $\Rips(P)_r$ is connected.  By choosing an isometric embedding $P\setminus\{p\}\hookrightarrow\R$, we may regard $P\setminus\{p\}$ as a subset of $\R$.  
\begin{definition}
Let $\Pleft\subset P\setminus\{p\}$ denote the set of points $y$ such that
\begin{enumerate}
\item[1.] $[p,y]\not\in\Rips(P)_r$,
\item[2.] there is no $w\in P\setminus\{p\}$ satisfying each of the following conditions:
\begin{itemize}
\item $w<y$, 
\item $w$ and $y$ lie in the same connected component of $\Rips(P\setminus\{p\})_r$,
\item $[p,w]\in\Rips(P)_r$.
\end{itemize}
\end{enumerate}
\end{definition}

See \cref{Fig:The_Set_E} for an illustration of $\Pleft$.

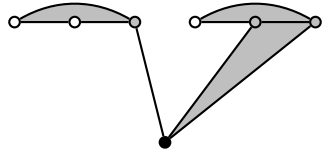
\begin{figure}[h]
\begin{center}
\begin{tikzpicture}[thick,scale=.8,
                   shorten >=2pt+0.5*\pgflinewidth,
                   shorten <=2pt+0.5*\pgflinewidth,
                   every node/.style={circle,
                                      draw,
                                      fill          = gray!50,
                                      inner sep     = 0pt,
                                      minimum width =4 pt}]
                                      \tikzset{
    every node/.style={
        circle,
        draw,
        fill          = gray!50,
        inner sep     = 0pt,
        minimum width =4 pt
    }   
}  

       \coordinate (a) at (1,0);
       \coordinate (b) at (2,0);
       \coordinate (c) at (3,0);
        \coordinate (d) at (4,0);
        \coordinate (e) at (5,0);
        \coordinate (f) at (6,0);
        \coordinate (p) at (3.5,-2);
        
 \fill[fill=gray!50] (a) to [bend left] (c) -- cycle;
 \fill[fill=gray!50] (d) to [bend left] (f) -- cycle;
  \fill[fill=gray!50] (e) -- (f) -- (p) -- cycle;               
        
          
\path[draw] 

       
       node[fill=white] at (a) {} 
       node[fill=white] at  (b) {}
       node at  (c) {} 
       node[fill=white] at  (d) {}
       node at  (e) {}
       node at  (f) {}
       node[fill=black] at  (p) {};
           

   \draw (a) -- (b) ;
    \draw (b) -- (c);
    \draw (a) to [bend left] (c);
     \draw (d) -- (e);
    \draw (e) -- (f);
      \draw (d) to [bend left] (f);
    \draw (c) -- (p);
    \draw (e) -- (p);
     \draw (f) -- (p);


\end{tikzpicture}
\end{center}
\caption{Illustration of the Vietoris--Rips complex $\Rips(P)_r$ for an almost linear metric space $P$, and some choice of scale parameter $r$.  Here, the metric on $P$ is not assumed to be the one given by the shown embedding of the points in the plane.  The distinguished point is solid black, points of $\Pleft$ are white, and the remaining points are gray.}
\label{Fig:The_Set_E}
\end{figure}

\begin{lemma}\label{Lem:Def_Retract}
$\Rips(P)_r$ deformation retracts onto $\Rips(P\setminus \Pleft)_r$.
\end{lemma}

\begin{proof}
We give a simple discrete Morse theory argument.  Define a DGVF $W$ on $\Rips(P)_r$ as follows: For $j\geq 2$ and \[\sigma:=[a_1<a_2<\cdots <a_j]\] a simplex in $\Rips(P)_r$ such that $a_1\in \Pleft$ and $a_2$ is the point in $P$ immediately to the right of $a_1$, $W$ matches $\sigma$ to its face $[a_1,a_3,\ldots,a_j]$.  To see that $W$ is acyclic, note that for any $W$-path 
\[\sigma_0, \tau_0,\ldots, \sigma_m ,\tau_m, \sigma_{m+1},\] 
the $\tau_j$ are strictly increasing with respect to the lexicographical order induced by the vertex ordering.  If $m\geq 0$ and $\sigma_0=\sigma_{m+1}$, then 
\[\sigma_0, \tau_0,\ldots, \sigma_m ,\tau_m, \sigma_0,\tau_0,\sigma_1\] 
is a $W$-path with $\tau_m<\tau_0$, so there cannot exist a non-trivial closed $W$-path.  Therefore $W$ is acyclic.  

Furthermore, $W$ matches every simplex containing a point in $\Pleft$, so the critical simplices of $W$ form the subcomplex  $\Rips(P\setminus \Pleft)_r$.  Hence, $\Rips(P)_r$ deformation retracts onto $\Rips(P\setminus \Pleft)_r$ by \cref{Prop:DMT}\,(ii).  
\end{proof}

Let us now assume that $\Pleft=\emptyset$.  We next define a discrete gradient vector field $\vfb$ on $\C:=\Rips(P)_r$.  We do so in two steps, first giving a simple definition of a DGVF $\vfa$ on $\C:=\Rips(P)_r$, and then extending this by matching more simplices.  To start, we order the vertices in $P$ by taking $\{p\}$ to be the minimum, and ordering $P-\{p\}$ from left to right, via the chosen embedding of $P-\{p\}$ into $\R$.  Henceforth, it will be our convention that the vertices of a simplex in $\C$ are always written in increasing order.

The definition of $\vfa$ is an instance of a general construction due to Matt Kahle \cite[Section 5]{kahle2011random}, which in fact gives a DGVF on any simplicial complex with ordered vertex set: 

\begin{definition}[The Discrete Gradient Vector Field $X$]
If a simplex $\sigma = [a_1,a_2 ,...,a_j]$ of on $\C$ has a coface $a_0\cup \sigma:=[a_0,a_1,a_2,\ldots,a_j]$ with $a_0 < a_1$, then $\vfa$ matches $\sigma$ to $a_0\cup \sigma$ with $a_0$ as small as possible.  $\vfa$ matches no other simplices.  It is easy to check that this in fact gives a well-defined DGVF. 
\end{definition}

See \cref{Fig:Vector_Field_K} for an illustration of the DGVF $X$.  

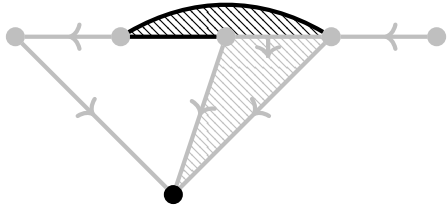
\begin{figure}[h]
\begin{center}
\begin{tikzpicture}[ultra thick,scale=1.4,
                   every node/.style={circle,
                                      draw,
                                      fill          = gray!50,
                                      inner sep     = 0pt,
                                      minimum width =4 pt}]
                                      \tikzset{
    every node/.style={
        circle,
        draw=gray!50,
        fill          = blue,
        inner sep     = 0pt,
        minimum width =6 pt
    }   
}  

        \coordinate (d) at (4,0);
        \coordinate (e) at (5,0);
          \coordinate (eg) at (5.8,.30);
          \coordinate (egminus) at (5.8,.10);
        \coordinate (f) at (6,0);
        \coordinate (fg) at (6.4,0);
        \coordinate (fgminus) at (6.4,-.2);
         \coordinate (g) at (7,0);
         \coordinate (h) at (8,0);
        \coordinate (p) at (5.5,-1.5);
        
 \fill[pattern=north west lines, pattern color=black] (e) to [bend left] (g) -- cycle;
 \fill[pattern=north west lines, pattern color=gray!50] (f) -- (g) -- (p) -- cycle;               
        
          
\path[draw] 

       
       node[fill=gray!50,very thick] at  (d) {}
       node[fill=gray!50,very thick] at  (e) {}
       node[fill=gray!50,very thick] at  (f) {}
       node[fill=gray!50,very thick] at  (g) {}
        node[fill=gray!50,very thick] at  (h) {}
       node[draw=black,fill=black,very thick] at  (p) {};
           

	 \draw[color=gray!50,postaction={decorate,decoration={markings,
        mark=at position 1 with {\arrow[gray!50, line width=.5mm]{>};}}}] (fg) to (fgminus);
%

                     \tikzset{shorten >=2.8pt+0.5*\pgflinewidth,
                   shorten <=2.7pt+0.5*\pgflinewidth}

               	 \draw[color=gray!50] (f) to (g);
                  \draw[color=black] (e) to [bend left] (g);

\tikzset{decoration={
    markings,
    mark=at position 0.5 with {\arrow{>}}}}

    
     \draw[color=gray!50,postaction=decorate] (d) -- (p);
     \draw[color=gray!50,postaction=decorate] (e) -- (d);
    \draw (e) -- (f);

     \draw[gray!50,postaction=decorate] (f) -- (p);
     \draw[gray!50,postaction=decorate] (h) -- (g);
    \draw[gray!50,postaction=decorate] (g) -- (p);


\end{tikzpicture}
\end{center}
\caption{Illustration of the discrete gradient vector field $\vfa$ on a Vietoris--Rips complex of an almost-linear metric space, with the bottom vertex ordered first, and the remaining vertices ordered left-to-right.  Matched simplices are gray, and matched pairs are denoted with an arrow pointing away from the simplex of lower dimension.  Critical simplices are black.  Thus, $\vfa$ has a one critical 0-simplex, two critical 1-simplices, and one critical 2-simplex.}
\label{Fig:Vector_Field_K}
\end{figure}

Clearly, $[p]$ is critical in $\vfa$, and since we assume that $\Pleft=\emptyset$, no other vertex is critical.  The following describes the remaining critical simplices in $\vfa$:

\begin{lemma}\label{lem:Criticality_Characterization}
For $j\geq 2$, a simplex $[a_1,\ldots,a_j]$ is critical in $\vfa$ if and only if the following three conditions are satisfied:
\begin{enumerate}
\item[1.] $a_1\ne p$, 
\item[2.] $[q,a_1,\ldots,a_j]\not\in \C$ for any $q<a_1$,
\item[3.]  $[p,a_2,a_3,\ldots a_j]\in \C$.
\end{enumerate}
\end{lemma}

\begin{proof}
If all three conditions hold, then by condition 2, $\sigma:=[a_1,\ldots,a_j]$ cannot be the simplex of lower dimension in a pair matched by $\vfa$, and by condition 3, $\vfa$ matches $[a_2,a_3,\ldots a_j]$ to $[p,a_2,a_3,\ldots a_j]$, so by condition 1, $\sigma$ cannot be the simplex of higher dimension in a pair matched by $\vfa$.  Thus $\sigma$ is critical in $\vfa$.

Conversely, if $\sigma$ is critical in $\vfa$, then condition 1 holds, for else $\sigma$ would match to $[a_2,a_3,\ldots a_j]$.  Condition 2 holds, for else $\sigma$ would match to a simplex of higher dimension.   Finally, condition 3 holds, for else $[a_2,a_3,\ldots,a_j]$ would match to some simplex $[q,a_2,\ldots,a_j]\in \C$ with $p< q < a_1$, implying that $[q,a_1,\ldots,a_j]\in \C$, and hence contradicting the criticality of $\sigma$.
\end{proof}

\begin{remark}
Note that \cref{lem:Criticality_Characterization} implies in particular that if $[a_1,...,a_j]$ is critical, then $a_1$ is not incident to $p$, since otherwise, in view of condition 3, condition 2 would be violated.
\end{remark}

\cref{lem:Criticality_Characterization} suggests a way to extend $\vfa$ to a DGVF $\vfb$ with the desired properties: 

\begin{definition}[The Discrete Gradient Vector Field $Y$]
For $[a_1,a_2,a_3,\ldots,a_j]$ a critical simplex for $\vfa$ with $j\geq 3$, suppose there exists no vertex $b$ such that $[p,b]\in \C$ and $a_1<b<a_2$.  It follows easily from \cref{lem:Criticality_Characterization} that $[a_1,a_3,\ldots,a_j]$ is also critical in $\vfa$.  We match $[a_1,a_2,a_3,\ldots,a_j]$ to $[a_1,a_3,\ldots,a_j]$ in $\vfb$.  We take all matched pairs in $\vfb\setminus \vfa$ to be of this form.  
\end{definition}

\begin{lemma} The matching $Y$ is acyclic, hence a DGVF.  
\end{lemma}
\begin{proof}
We claim that in any $\vfb$-path 
\[\sigma_0, \tau_0,\ldots, \sigma_m ,\tau_m, \sigma_{m+1},\]  no two distinct $\tau_j$ are equal.  From this, it follows that there does not exist a non-trivial closed $\vfb$-path, so $\vfb$ is indeed acyclic.  
To verify the claim, we make three simple observations:  Letting $\tau_j^1$ denote the minimum vertex in $\tau_j$, we have that for any $j\in \{0,\ldots, m-1\}$,
\begin{enumerate}
\item[1.] If $\tau_{j+1}$ is matched by $\vfa$, then $\tau_{j}^1>\tau_{j+1}^1$.  
\item[2.] If $\tau_{j+1}$ is matched by $\vfb\setminus \vfa$, then $\tau_{j}$ is matched by $\vfa$ and $\tau_{j+1}^1=\tau_j^1$.    
\item[3.] $\tau_j\ne\tau_{j+1}$.
\end{enumerate}
By observations $1$ and $2$, we have that $\tau_{j}^1>\tau_{k}^1$ for all $k\in \{j+2,j+3,\ldots, m\}$. The claim follows from this and observation 3.
\end{proof}

\cref{Fig:Vector_Field_F} illustrates the extension of the DGVF $\vfa$ of example \cref{Fig:Vector_Field_K} to the DGVF $\vfb$.  

\begin{figure}[h]
\begin{center}
\begin{tikzpicture}[ultra thick,scale=1.4,
                   every node/.style={circle,
                                      draw,
                                      fill          = black!50,
                                      inner sep     = 0pt,
                                      minimum width =4 pt}]
                                      \tikzset{
    every node/.style={
        circle,
        draw=gray!50,
        fill          = blue,
        inner sep     = 0pt,
        minimum width =6 pt
    }   
}  

        \coordinate (d) at (4,0);
        \coordinate (e) at (5,0);
          \coordinate (eg) at (5.8,.30);
          \coordinate (egminus) at (5.8,.10);
        \coordinate (f) at (6,0);
        \coordinate (fg) at (6.4,0);
        \coordinate (fgminus) at (6.4,-.2);
         \coordinate (g) at (7,0);
         \coordinate (h) at (8,0);
        \coordinate (p) at (5.5,-1.5);
        
 \fill[pattern=north west lines, pattern color=gray!50] (e) to [bend left] (g) -- cycle;
 \fill[pattern=north west lines, pattern color=gray!50] (f) -- (g) -- (p) -- cycle;               
        
          
\path[draw] 

       
       node[fill=gray!50,very thick] at  (d) {}
       node[fill=gray!50,very thick] at  (e) {}
       node[fill=gray!50,very thick] at  (f) {}
       node[fill=gray!50,very thick] at  (g) {}
       node[fill=gray!50,very thick] at  (h) {}
       node[draw=black,fill=black,very thick] at  (p) {};
           

	 \draw[color=gray!50,postaction={decorate,decoration={markings,
        mark=at position 1 with {\arrow[gray!50, line width=.5mm]{>};}}}] (fg) to (fgminus);
        
   \draw[color=gray!50,postaction={decorate,decoration={markings,
        mark=at position 1 with {\arrow[gray!50, line width=.5mm]{>};}}}] (eg) to (egminus);

                     \tikzset{shorten >=2.8pt+0.5*\pgflinewidth,
                   shorten <=2.7pt+0.5*\pgflinewidth}

               	 \draw[color=gray!50] (f) to (g);
                  \draw[color=gray!50] (e) to [bend left] (g);

\tikzset{decoration={
    markings,
    mark=at position 0.5 with {\arrow{>}}}}

    
     \draw[color=gray!50,postaction=decorate] (d) -- (p);
     \draw[color=gray!50,postaction=decorate] (e) -- (d);
     \draw[color=gray!50,postaction=decorate] (h) -- (g);
    \draw (e) -- (f);

     \draw[gray!50,postaction=decorate] (f) -- (p);
    \draw[gray!50,postaction=decorate] (g) -- (p);


\end{tikzpicture}
\end{center}
\caption{The extension of the DGVF $\vfa$ of example \cref{Fig:Vector_Field_K} to the DGVF $\vfb$.  $\vfb$ contains one pair of matched simplices not in $\vfa$: The curved 1-simplex in the top of the figure now is matched with its coface.  Thus, $\vfb$ has two critical simplices: A critical 0-simplex and a critical 1-simplex.}
\label{Fig:Vector_Field_F}
\end{figure}
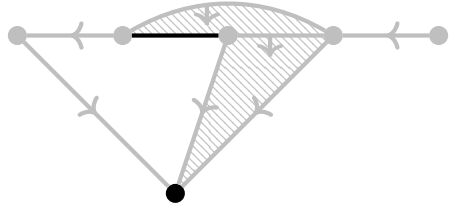

\begin{lemma}\label{Lem:DGVF_Critical_Simplices}
The critical simplices of $\vfb$ are $[p]$ and the 1-simplices $[a_1,a_2]$ such that
\begin{enumerate}
\item[1.] $[a_1,a_2]$ satisfies the conditions of \cref{lem:Criticality_Characterization} and
\item[2.] $[p,b]\not \in \C$ for all $a_1<b<a_2$.
\end{enumerate}
In particular, $\vfb$ has a single critical 0-simplex, and no critical simplices of dimension greater than one.
\end{lemma}
\begin{proof}
Since $\vfb$ is an extension of $\vfa$, any critical simplex of $\vfb$ is a critical simplex of $\vfa$.  It is easy to see that $\vfb$ matches every critical simplex of $\vfa$, except $[p]$ and those 1-simplices satisfying condition 2.   A 1-simplex is critical in $\vfa$ if and only if it satisfies the conditions of \cref{lem:Criticality_Characterization}, so the result follows.
\end{proof}

\begin{lemma}\label{Lem:Wedge_Of_Circles}
For $P$ an almost linear metric space and $r\in [0,\infty)$, each component of $\C=\Rips(P)_r$ is contractible or deformation retracts onto a wedge sum of finitely many circles.
\end{lemma}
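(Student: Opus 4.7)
The plan is to first dispose of components of $\Rips(P)_r$ not containing the distinguished point $p$, then analyze the component $C_r$ containing $p$ by decomposing it as the union of a cone and a disjoint union of contractible pieces, and finally compute the homotopy type either via a pushout/mapping cone argument or via an explicit discrete gradient vector field.

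Components of $\Rips(P)_r$ not containing $p$ sit inside $\Rips(P\setminus\{p\})_r$. Since $P\setminus\{p\}$ is isometric to a finite subset of $\R$ and hence tree-like, Proposition \ref{Prop:Tree_Like_MS} gives that each such component is contractible. For the component $C_r$ containing $p$, set $N_r := \{y\in P\setminus\{p\} : d(p,y)\leq 2r\}$; if $N_r=\emptyset$ then $C_r=\{p\}$, so assume $N_r\neq\emptyset$. Write $C_r = X\cup \mathrm{St}(p)$, where $\mathrm{St}(p)$ is the closed star of $p$ in $\Rips(P)_r$ and $X = K_1\sqcup\cdots\sqcup K_j$ is the union of the components of $\Rips(P\setminus\{p\})_r$ meeting $N_r$. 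One checks that $\mathrm{Lk}(p) = \Rips(N_r)_r$, so applying Proposition \ref{Prop:Tree_Like_MS} twice, each $K_a$ is contractible and the components $L_1,\ldots,L_m$ of $\mathrm{Lk}(p)$ are contractible. Moreover $\mathrm{St}(p) = \{p\}\ast \mathrm{Lk}(p)$ is a cone, hence contractible, and $X\cap \mathrm{St}(p) = \mathrm{Lk}(p)$.

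Since $\mathrm{St}(p)$ is a contractible CW-subcomplex of $C_r$, the quotient map gives $C_r \simeq C_r/\mathrm{St}(p)$, which is obtained from $X$ by identifying all of $\mathrm{Lk}(p)$ to a single point. Since each $L_b$ is contractible and a CW-subcomplex of its containing $K_{a}$, collapsing each $L_b$ individually to a point is a homotopy equivalence. Let $s_a$ denote the number of $L_b$'s contained in $K_a$ (so $\sum_a s_a = m$); after this step, the remaining identification is to glue $s_a$ distinguished points in each contractible $K_a$ to a single common basepoint. A mapping cone computation — recognizing the quotient as the homotopy pushout $\mathrm{pt}\leftarrow \{q_1,\ldots,q_s\}\to K$ and hence, using $K\simeq\mathrm{pt}$, as the suspension $\Sigma\{q_1,\ldots,q_s\}$, which is a graph with $2$ vertices and $s$ arcs — shows that identifying $s$ points in a contractible CW-complex to one yields a space homotopy equivalent to $\bigvee^{s-1} S^1$. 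Wedging the contributions over the $K_a$, we conclude $C_r \simeq \bigvee^{m-j} S^1$: contractible if $m=j$, and a wedge of finitely many circles otherwise.

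To upgrade ``homotopy equivalent to a wedge of finitely many circles'' to ``deformation retracts onto a wedge of finitely many circles'' as claimed in the lemma, I would realize the wedge as an explicit subcomplex of $C_r$ via discrete Morse theory: construct a DGVF $\vfa$ on $C_r$ whose critical simplices form a graph $\Gamma\subset C_r$ with $j+1$ vertices (one per $K_a$ plus $\{p\}$) and $m$ edges (one $[p,w_b]$ per component $L_b$, with chosen $w_b\in L_b$). The construction pairs each $\sigma\cup\{p\}\in\mathrm{St}(p)$ with $\sigma\in\mathrm{Lk}(p)$ except when $\sigma=\{w_b\}$, and extends by a DGVF on each $K_a$ (supplied by the contractibility of $K_a$ from Proposition \ref{Prop:Tree_Like_MS}) that leaves exactly the $\{w_b\}$'s and one additional vertex $v_a$ as critical. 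Then Proposition \ref{Prop:DMT}(ii) gives a deformation retraction of $C_r$ onto $\Gamma$, which is a connected finite graph with Euler characteristic $j+1-m$ and hence is a tree (contractible) or homotopy equivalent to $\bigvee^{m-j} S^1$. The main obstacle is the compatibility of the DGVF on $\mathrm{St}(p)$ and the one on $X$: one must choose the DGVF on each $K_a$ so that each $\{w_b\}$ with $L_b\subset K_a$ is critical (not matched inside $K_a$), then verify via Proposition \ref{Prop:K_Paths} that no nontrivial closed $\vfa$-path forms, since any such path would either stay within a single $K_a$ (contradicting Stage-1 acyclicity) or cross through the critical vertex $\{p\}$.
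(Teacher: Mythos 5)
Your main argument is correct, but it takes a genuinely different route from the paper's. You compute the homotopy type of the component $C_r$ containing $p$ structurally: the decomposition $C_r=X\cup\mathrm{St}(p)$ with $X\cap\mathrm{St}(p)=\mathrm{Lk}(p)$, the identification $\mathrm{Lk}(p)=\Rips(N_r)_r$ (valid since Rips complexes are clique complexes), two applications of \cref{Prop:Tree_Like_MS}, and a quotient/homotopy-pushout computation yielding $C_r\simeq \bigvee^{m-j}S^1$. The paper instead works coordinates-first: it orders $P\setminus\{p\}$ along $\R$, deformation retracts $\Rips(P)_r$ onto $\Rips(P\setminus \Pleft)_r$ by an explicit matching removing the set $\Pleft$, then builds Kahle's discrete gradient vector field $\vfa$ and a refinement $\vfb$ whose critical cells are a single $0$-simplex and finitely many $1$-simplices, concluding via \cref{Prop:DMT}\,(i). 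Your route is shorter and more conceptual; what the paper's heavier construction buys is not the lemma itself but its reuse downstream: the explicit critical $1$-simplices of $\vfb$ furnish the concrete $\pi_1$-generators $\gamma_\sigma$ on which the triangle-inequality proofs of \cref{Thm:ALMS}\,(i) and (ii) depend, and your argument produces no such generators. Note also that the paper's own final step invokes \cref{Prop:DMT}\,(i), which gives only a homotopy equivalence (the critical cells of $\vfb$ do not form a subcomplex), so your quotient argument already matches the strength the paper actually establishes for this lemma.

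Your proposed discrete-Morse upgrade to a literal deformation retraction, however, contains genuine errors. For \cref{Prop:DMT}\,(ii) the critical simplices must form a subcomplex, so the endpoints $w_b$ of your critical edges $[p,w_b]$ must themselves be critical; this forces $1+m+j$ critical vertices, not the $j+1$ you state. Worse, with the $v_a$ critical and no critical edge meeting them, your $\Gamma$ is disconnected, and a connected complex cannot deformation retract onto a disconnected subcomplex, so the critical-cell structure you describe cannot exist; an Euler-characteristic count likewise shows that a DGVF on the connected contractible $K_a$ with $s_a+1$ critical vertices must carry at least $s_a$ critical edges, which your bookkeeping omits (coincidentally your Euler characteristic $j+1-m$ still comes out right, masking the error). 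Finally, the claim that contractibility of $K_a$ ``supplies'' a DGVF with prescribed critical cells is false in general: a contractible complex need not admit a DGVF with a single critical cell (the dunce hat is the standard example), so you would need to prove a collapsibility-type statement for these clique complexes of interval graphs, compatibly with the cone matching on $\mathrm{St}(p)$ --- precisely the kind of explicit work the paper's $\Pleft$-removal and Kahle-field construction carries out. As it stands, the upgrade sketch should be dropped or substantially reworked; your first argument alone is a valid proof of the lemma in the homotopy-equivalence sense.
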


\begin{proof}
Any component of $\C$ not containing $p$ is tree-like, and so is contractible by \cref{Prop:Tree_Like_MS}.  Thus, we may assume loss of generality that $\C$ is connected.  Moreover, by \cref{Lem:Def_Retract}, we may assume without loss of generality that $\Pleft=\emptyset$.  The DGVF $Y$ on $\C$ is defined under these assumptions.  The result now follows from \cref{Lem:DGVF_Critical_Simplices} and \cref{Prop:DMT}\,(i).
\end{proof}

\begin{proof}[Proof of \cref{Thm:ALMS}\,(i)]
As in the proof of \cref{Lem:Wedge_Of_Circles}, we may assume without loss of generality that $\C:=\Rips(P)_r$ is connected, and that $\Pleft=\emptyset$.  The fundamental group of a wedge sum of circles is free \cite[Example 1.21]{hatcher2002algebraic}, so by \cref{Lem:Wedge_Of_Circles}, $\pi_1(\C,p)$ is free.  To establish \cref{Thm:ALMS}\,(i), it suffices to show that $\pi_1(\C,p)$ is trivial or cyclic.  

To show this, we first note that the DGVF $\vfb$ provides us with a basis for $\pi_1(\C,p)$, as follows: 
Let $\Cr$ denote the set of critical 1-simplies of $\vfb$, as described by \cref{Lem:DGVF_Critical_Simplices}.  For $\sigma=[b,c]\in \Cr$ with $b<c$, let $a\in P-\{p\}$ denote the maximum vertex such that $a<b$ and $[p,a]\in \C$.  Such $a$ always exists by our assumption that $\Pleft=\emptyset$.  Let us regard $S^1$ as a based topological space, with the basepoint denoted as $1$, and let $\gamma_\sigma:S^1\to [p,a]\cup[a,c]\cup[p,c]$ be a homeomorphism sending $1$ to $p$.  

We now observe that $G:=\{\gamma_\sigma\mid \sigma\in \Cr\}$
 is a basis for $\pi_1(\C,p)$.  For $\sigma\in \Cr$, let $S^1_\sigma$ denote a copy of $S^1$.  The proof of \cref{Prop:DMT}\,(i) presented in \cite{kozlov2008combinatorial} gives a (not necessarily unique) homotopy equivalence $h:\C\to \vee_{\sigma\in \Cr} S^1_\sigma$ mapping the interior of $\sigma$ homeomorphically to $S^1_\sigma\setminus\{1\}$, so that 
$h\circ \gamma_\sigma$ is homotopic either to the inclusion $\iota_\sigma:S^1_\sigma\hookrightarrow \vee_{\sigma\in \Cr} S^1_\sigma$, or to its inverse in $\pi_1(\vee_{\sigma\in \Cr} S^1_\sigma,1)$.  Since $h$ is a homotopy equivalence and $\{\iota_\sigma \mid \sigma\in \Cr\}$ is a basis for $\pi_1(\vee_{\sigma\in \Cr} S^1_\sigma,1)$, we see that $G$ is a basis for $\pi_1(\C,p)$, as desired.

To finish the proof of \cref{Thm:ALMS}\,(i), it remains to show that $|G|\leq 1$.  To do so, we apply the triangle inequality.  Our argument is illustrated in \cref{Fig:Triangle_Inequality}.  
For $[b,c]=\sigma\in \Cr$ with $b<c$, let $a<b$ be as above, and for $[b',c']=\sigma'\in \Cr$ with $b'<c'$, define $a'<b'$ in the same way.  To arrive at a contradiction, suppose $\sigma\ne \sigma'$.  Then either $c\leq a'$ or $c'\leq a$.  Switching the labels of $\sigma$ and $\sigma'$ if necessary, we may assume without loss of generality that $c\leq a'$.  We have $[p,a],[p,c']\in \C$, so $d(a,p)\leq 2r$ and $d(p,c')\leq 2r$.  By the triangle inequality, $d(a,c')\leq 4r$.  Thus, since $P\setminus\{p\}$ is isometric to a subset of $\R$, we have \[d(a,c)+d(a',c')\leq d(a,c') \leq 4r.\]  Therefore either $d(a,c)\leq 2r$ or $d(a',c')\leq 2r$, so either $[a,c]\in \C$ or $[a',c']\in \C$.  But then either $\gamma_\sigma$ or $\gamma_{\sigma'}$ is nullhomotopic in $\C$, contradicting that $G$ is a basis  for $\pi_1(\C,p)$.  \end{proof}

\begin{figure}[h]
\begin{center}
\begin{tikzpicture}[ultra thick,scale=1.2]

                                                         \tikzset{
    every node/.style={
        circle,
        draw=gray!50,
        fill          = blue,
        inner sep     = 0pt,
        minimum width =6 pt
    }   
}

       \coordinate (a) at (1,0);
       \coordinate (b) at (2,0);
       \coordinate (c) at (3,0);
        \coordinate (d) at (4,0);
        \coordinate (e) at (5,0);
         \coordinate (eg) at (5.8,.30);
         \coordinate (egminus) at (5.8,.10);
        \coordinate (f) at (6,0);
        \coordinate (fg) at (6.4,0);
        \coordinate (fgminus) at (6.4,-.2);
         \coordinate (g) at (7,0);
        \coordinate (p) at (4,-2);

        
          
\path[draw] 

           
       node[fill=gray!50,very thick] at (a) {} 
       node[fill=gray!50,very thick] at  (b) {}
       node[fill=gray!50,very thick] at  (c) {} 
       node[fill=gray!50,very thick] at  (d) {}
       node[fill=gray!50,very thick] at  (e) {}
       node[fill=gray!50,very thick] at  (f) {}
       node[draw=black,fill=black,very thick] at  (p) {};
           

%

                                                        \tikzset{
    every node/.style={}}   
\path[draw]
node[above] at (1,.05) {$a$}
node[above] at (2,.05) {$b$}
node[above] at (2.5,0) {$\sigma$}
node[above] at (3,.05) {$c$}
node[above] at (4,.05) {$a'$}
node[above] at (5,.05) {$b'$}
node[above] at (5.5,0) {$\sigma'$}
node[above] at (6,.05) {$c'$};

                     \tikzset{shorten >=2.8pt+0.5*\pgflinewidth,
                   shorten <=2.7pt+0.5*\pgflinewidth}



   \draw[gray!50,postaction=decorate] (b) -- (a) ;
      \draw[gray!50,postaction=decorate] (a) -- (p) ;
    \draw (b) -- (c);
    
     \draw[gray!50,postaction=decorate] (d) -- (p);
     \draw[gray!50,postaction=decorate] (e) -- (d);
    \draw (e) -- (f);

    \draw[gray!50,postaction=decorate] (c) -- (p);
     \draw[gray!50,postaction=decorate] (f) -- (p);
     
     \draw[black!40,thick,decorate,decoration={brace,amplitude=12pt},dashed] (4.0,-2.10) -- (.9,0);
      \draw[black!40,thick,decorate,decoration={brace,amplitude=12pt},dashed] (6.1,0) -- (4.0,-2.10);
      
      \path[draw]
            node at (2.0,-1.6) {$\leq 2t$}
            node at (5.6,-1.5) {$\leq 2t$};
    

\end{tikzpicture}
\end{center}
\caption{Illustration of the argument by contradiction that $|G|\leq 1$ in the proof of \cref{Thm:ALMS}\,(i).  Critical simplices are black and matched simplices are gray.  By the triangle inequality, $d(a,c')\leq 4r$, so since $\{a<b<c\leq a'<b'<c'\}$ is isometric to a subset of $\R$, either $[a,c]\in \C$ or $[a',c']\in \C$.}
\label{Fig:Triangle_Inequality}
\end{figure}
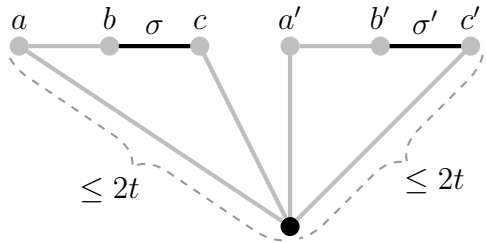

\begin{proof}[Proof of \cref{Thm:ALMS}\,(ii)]

As in the statement of the theorem, let $C_r$ denote the component of $\Rips(P)_r$ containing $\{p\}$.  We need to show that for $r\leq r'\in [0,\infty)$, if $C_r\simeq S^1\simeq C_{r'}$, then the inclusion $C_r\hookrightarrow C_{r'}$ is a homotopy equivalence.  Let $\gamma_\sigma:S^1\to C_r$ and $\gamma_{\sigma'}:S^1\to C_{r'}$ be the generators for $\pi_1(C_{r},p)$ and $\pi_1(C_{r'},p)$ specified in the proof of \cref{Thm:ALMS}\,(i) above.

Given the way $\gamma_{\sigma}$ and $\gamma_{\sigma'}$ are defined, exactly one of the following must be true:
\begin{enumerate}
\item[1.] $c\leq a'$,
\item[2.] $c'\leq a$,
\item[3.] $a\leq a' < c'\leq c$.
\end{enumerate}

We show that we cannot have $c\leq a'$ using essentially the same triangle inequality argument we used in the proof of \cref{Thm:ALMS}\,(i): Suppose otherwise.  Then $d(a,p)<2r$ and $d(c',p)<2r'$.  By the triangle inequality, $d(a,c')\leq 2(r+r')$, so we have \[d(a,c)+d(a',c')\leq d(a,c') \leq 2(r+r').\]  Therefore either $d(a,c)\leq 2r$ or $d(a',c')\leq 2r'$, leading to a contradiction as above.

The same argument shows that we cannot have $c'\leq a$.  Therefore, we must have $a\leq a' < c'\leq c$. 

 We will show that if $a\ne a'$, then $[a,a']\in C_{r'}$: We have $d(a,p)\leq 2r$ and $d(c',p)\leq 2r'$, so by the triangle inequality, $d(a,c')\leq 2(r+r')$.  Therefore either $d(a,a')\leq 2r'$ or $d(a',c')\leq 2r$.  But since $\gamma_{\sigma'}$ is not nullhomotopic by assumption, we must have $d(a',c')>2r'\geq 2r$, so $d(a,a')\leq 2r'$.  Thus $[a,a']\in C_{r'}$, as desired.  It follows that $[p,a,a']\in C_{r'}$.

The symmetric argument shows that if $c'\ne c$, then $[p,c',c]\in C_{r'}$.  Letting \[j:C_{r}\hookrightarrow C_{r'}\] denote the inclusion, we thus have that $j\circ \gamma_\sigma\sim \gamma_{\sigma'}$.  Since $\gamma_\sigma$ and $\gamma_{\sigma'}$ are both homotopy equivalences, $j$ must be a homotopy equivalence as well.  
\end{proof}

\begin{proof}[Proof of \cref{Thm:ALMS}\,(iii)]
Given the form of the set $G$ of generators for $\pi_1(\C(P)_r,p)$ given in the proof of \cref{Thm:ALMS}\,(i), it is clear that if \[r\not \in \left[d(p,P\setminus \{p\}),
\mathrm{diameter}(P\setminus\{p\})/2\right),\] then $\pi_1(\C(P)_r,p)$ is trivial.  By $(i)$ then, each component of $\C(P)_r$ is contractible.  Hence, the unique interval of $\B_1(P)$, if it exists, is contained in 
\[\left[d(p,P\setminus \{p\}),
\mathrm{diameter}(P\setminus\{p\})/2\right).\]

To finish the proof of (iii), we need to show that the unique bar of $\B_1(P)$ is of length at most $d(p,P\setminus \{p\})$.  This follows from the stability of persistent homology.  To see this, note that since $P\setminus \{p\}$ is isometric to a subset of $\R$, it is tree-like, so \cref{Prop:Tree_Like_MS} gives that $\B_1(P\setminus \{p\})=\emptyset$.  Therefore, by \cref{Thm:SubspaceStability},
\[2\, d_B(\B_1(P),\emptyset) =2\, d_B(\B_1(P),\B_1(P\setminus \{p\}))  \leq d_{H}(P,P\setminus \{p\})=d(p,P\setminus \{p\}),\]
where the last equality follows from the definition of $d_H$.  The bottleneck distance of any barcode $\B$ to the empty barcode is half the length of the longest interval of $\B$, so the result follows.
\end{proof}

\subsection{Inference about Recombination from Barcodes}\label{Sec:Inference}
As an immediate corollary of the results of \cref{Sec:Met_Decomposition,Sec:ALMS}, we now obtain our main result relating barcodes to recombination in the galled tree setting.

Recall from \cref{Sec:NoveltyProfiles} that $\T(\eh)$ denotes the topological novelty profile of a history $\eh$, and that the temporal novelty of a recombinant $r$ (with respect to some choice of time function) is denoted as $\mathcal N(r)$.  Recall also from \cref{Equality_Of_Nov_Profiles_For _Galled_Trees} that when $\eh$ is indexed by a galled tree, $\T(\eh)$ is equal to the temporal novelty profile of $\eh$, with respect to any time function.
 
For $G$ a phylogenetic graph, let $\mathcal R^G$ denote the set of recombinants of $G$.  As in the beginning of \cref{Sec:Barcodes_Of_Histories_Indexed_By_Galled_Trees}, for $\B{}$ a barcode, let $\lens(\B{})$ denote the list of lengths of intervals of $\B{}$, sorted in descending order.

\begin{theorem}\label{Thm:Lower_Bound}
Let $\eh$ be a history indexed by a galled tree $G$. 
\begin{enumerate}
\item[(i)] \cref{Prop:Decomposable_Populations}\,(ii) and {\cref{Thm:ALMS}\,(ii)} yield a canonical injection \[\phi:\B_1(\eh)\hookrightarrow\mathcal{R}^G,\] such that $\len(I) \leq \mathcal N(\phi(I))$ for all $I\in \B_1(\eh)$.
In particular, \[\lens(\B_1(\eh))\leq \T(\eh).\]
\item[(ii)] $\B_i(\eh)=\emptyset$ for $i\geq 2$.
\end{enumerate}
\end{theorem}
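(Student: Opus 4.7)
The plan is to combine the metric decomposition \cref{Prop:Decomposable_Populations} with the detailed topological analysis of almost linear metric spaces in \cref{Thm:ALMS}, using \cref{Lem:Loop_Almost_Linear} to bridge the two. Fix an expression of $G$ as an iterated sum of directed loops $G^1,\ldots,G^k$ and rooted trees, where the $G^j$ correspond bijectively to the recombinants $r_1,\ldots,r_k$ of $G$. Let $\eh^j$ be the restriction of $\eh$ to $G^j$.

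Part (ii) should then be immediate: \cref{Prop:Decomposable_Populations}(ii) gives $\B_i(\eh)=\bigcup_{j=1}^k \B_i(\eh^j)$ for $i\geq 1$, and by \cref{Lem:Loop_Almost_Linear} each $\met{\eh^j}$ is almost linear, so \cref{Thm:ALMS}(i) forces $\B_i(\eh^j)=\emptyset$ for $i\geq 2$.

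For part (i), applying the same decomposition at $i=1$ and invoking \cref{Thm:ALMS}(ii), each $\B_1(\eh^j)$ contributes at most one interval. I would define $\phi$ by sending the unique interval of $\B_1(\eh^j)$ (when present) to $r_j$; injectivity is then immediate since different loops carry different recombinants. For the length bound, choose $\eh_{r_j}$ as the distinguished point of the almost linear space $\met{\eh^j}$ (using that $r_j$ is the unique sink of $G^j$, so $\met{\eh^j}\setminus\{\eh_{r_j}\}$ is isometric to a subset of $\R$ by traversing the two directed paths from the source to $r_j$). Then \cref{Thm:ALMS}(iii) gives
\[
\len(I)\leq d(\eh_{r_j},\met{\eh^j}\setminus\{\eh_{r_j}\})=\min_{v\in V(G^j)\setminus\{r_j\}}d(\eh_v,\eh_{r_j}).
\]

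The main subtle step, and the one most worth care, will be identifying this loop-local minimum with the temporal novelty $\mathcal N(r_j)=\min\{d(\eh_v,\eh_{r_j})\mid t(v)<t(r_j)\}$ taken over \emph{all} vertices of $G$. Since every $v\in V(G^j)\setminus\{r_j\}$ satisfies $t(v)<t(r_j)$ (as $r_j$ is the unique sink of $G^j$), the loop-local minimum is at least $\mathcal N(r_j)$. For the reverse direction, \cref{Prop:Decomposing_Evolutionary_Graphs} tells us that $\met{\eh}$ is isometric to an iterated wedge sum of the $\met{\eh^j}$ (and the tree summands), glued along the loop sources $s_j$; hence for any vertex $w$ outside $G^j$,
\[
d(\eh_w,\eh_{r_j})=d(\eh_w,\eh_{s_j})+d(\eh_{s_j},\eh_{r_j})\geq d(\eh_{s_j},\eh_{r_j}),
\]
and $s_j\in V(G^j)\setminus\{r_j\}$, so no vertex outside $G^j$ can beat the loop-local minimum. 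This yields $\len(I)\leq \mathcal N(\phi(I))$. The companion statement $\lens(\B_1(\eh))\leq \T(\eh)$ then follows from the injectivity of $\phi$ (which pairs the $i^{\mathrm{th}}$ largest interval length with a distinct recombinant whose novelty is at least that length), together with \cref{Equality_Of_Nov_Profiles_For _Galled_Trees} identifying $\T(\eh)$ with the sorted list of temporal novelties.
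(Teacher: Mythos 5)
Your overall architecture is exactly the paper's: part (ii) and the injection in part (i) come, as in the paper, from \cref{Prop:Decomposable_Populations}\,(ii), \cref{Lem:Loop_Almost_Linear}, and \cref{Thm:ALMS}\,(i)--(ii), and the length bound comes from \cref{Thm:ALMS}\,(iii) applied with $\eh_{r_j}$ as the distinguished point. (The paper phrases the last step as the identification of the entry of $\T(\eh)$ attached to $r_j$ with the loop-local distance $d(\eh^{L\setminus r_j},\eh_{r_j})$, while you identify that distance with the temporal novelty $\mathcal N(r_j)$ and then invoke \cref{Equality_Of_Nov_Profiles_For _Galled_Trees}; this is equivalent bookkeeping.)

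However, your ``reverse direction'' step contains a genuine error. In the iterated sum defining a galled tree, each summand is glued along \emph{its own} source to an \emph{arbitrary} vertex of the other summand, so blocks may attach to the loop $G^j$ at any of its vertices---including internal loop vertices and the recombinant $r_j$ itself (see \cref{Fig:Galled_Tree}, where rooted trees hang off internal loop vertices and off a recombinant). Consequently your displayed identity $d(\eh_w,\eh_{r_j})=d(\eh_w,\eh_{s_j})+d(\eh_{s_j},\eh_{r_j})$ is false in general, and your stated conclusion---that no vertex outside $G^j$ can beat the loop-local minimum---is also false: a clonal child $w$ of $r_j$ can have $d(\eh_w,\eh_{r_j})=1$ while the loop-local minimum is arbitrarily large. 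What rescues the argument is precisely the time function, which your write-up never uses at this point. Any block glued at $r_j$ is glued along its root, so all of its vertices are descendants of $r_j$ and satisfy $t(w)>t(r_j)$; they are therefore excluded from the minimum defining $\mathcal N(r_j)$. Every other vertex $w$ outside $G^j$ attaches to the loop through some $u\in V(G^j)\setminus\{r_j\}$, whence $d(\eh_w,\eh_{r_j})=d(\eh_w,\eh_u)+d(\eh_u,\eh_{r_j})\geq d(\eh_u,\eh_{r_j})$, which is at least the loop-local minimum. With this repair---replacing $s_j$ by the actual attachment vertex, and discarding descendants of $r_j$ via $t$---your identification of the loop-local minimum with $\mathcal N(r_j)$ goes through, and the remainder of your proof (the injectivity of $\phi$, the sorted-list comparison, and part (ii)) is correct and matches the paper's argument.
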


\begin{proof}
For $G$ a galled tree, each $r\in \mathcal R^G$ corresponds to an entry of $\mathcal T(\eh)$; in fact, this entry is easily seen to be $d(\eh^{L\setminus r},\eh_r)$, where $L$ denotes the source-sink loop corresponding to $R$, and $\eh^{L\setminus r}$ denotes the restriction of $\eh$ to vertices of $L$ other than $r$. (i) now follows from {\cref{Thm:ALMS}\,(iii)}.

(ii) is immediate from \cref{Prop:Decomposable_Populations}\,(ii), \cref{Lem:Loop_Almost_Linear}, and {\cref{Thm:ALMS}\,(i)}.
\end{proof}

\begin{example}\label{Ex:Subsample_Counterexample}
Given the analogy between \cref{Thm:Chan_Trees} (for trees) and \cref{Thm:Lower_Bound} (for galled trees), and the fact that \cref{Thm:Chan_Trees} holds for arbitrary subsamples of a history, it is natural to ask whether \cref{Thm:Lower_Bound} also holds for arbitrary subsamples.  The example shown in \cref{Fig:Subsample_Counterexample} demonstrates that \cref{Thm:Lower_Bound}\,(i) does not hold for arbitrary subsamples; the example, discovered by computer, is a subset $S$ of a history $\eh$ indexed by a galled tree with a single recombinant, for which $\B_1(S)=\{[5,6),[5,6)\}$ and $\B_1(\eh)=\emptyset$.  We conjecture that \cref{Thm:Lower_Bound}\,(ii) also does not hold for arbitrary subsets.  

Nevertheless, it may be the case that for reasonable random models of histories indexed by galled trees, violations of \cref{Thm:Lower_Bound} are relatively rare.  We provide some preliminary numerical evidence for this in \cref{Sec:Violations}, focusing on how often the number of intervals in $B_1(S)$ of a sample $S$ exceeds the number of recombinants in the underlying history.
\begin{figure}[h]
\begin{center}
\begin{tikzpicture}[thick,scale=1.3,->,
                   shorten >=2pt+0.5*\pgflinewidth,
                   shorten <=2pt+0.5*\pgflinewidth]

       \coordinate (a) at (0,0);
          \coordinate (aAbove) at (0,.1);
       \coordinate (b) at (-1,-1);
       \coordinate (bc) at (0,-1);
        \coordinate (bcBelow) at (0,-1.1);
       \coordinate (c) at (1,-1);
        \coordinate (d) at (-2,-2);
         \coordinate (de) at (-1,-2);
          \coordinate (deBelow) at (-.75,-2.1);
        \coordinate (e) at (0,-2);
         \coordinate (ef) at (1,-2);
         \coordinate (efBelow) at (.75,-2.1);
        \coordinate (f) at (2,-2);
        \coordinate (g) at (-3,-3);
         \coordinate (h) at (-1,-3);
          \coordinate (i) at (1,-3);
           \coordinate (j) at (3,-3);
            \coordinate (l) at (-2,-4);
             \coordinate (m) at (0,-4);
               \coordinate (mBelow) at (0,-4.1);
              \coordinate (n) at (2,-4);
               \coordinate (o) at (4,-4);
           \coordinate (q) at (-3,-5);
            \coordinate (r) at (-1,-5);
             \coordinate (s) at (1,-5);
              \coordinate (t) at (3,-5);
          
          \node[above] at (aAbove) {$\{\}$};          
          \node[above left] at (b) {$\{\au,\bu\}$};          
               \node[below] at (bcBelow) {$\{\cu\}$};                  
          \node[above right] at (c) {$\{\du,\eu\}$};  
            \node[above left] at (d) {$\{\au,\bu,\fu,\gu\}$};        
                \node[below] at (deBelow) {$\{\au,\bu,\hu,\iu\}$};     
                \node[below] at (efBelow) {$\{\du,\eu,\ju,\ku\}$};     
             \node[above right] at (f) {$\{\du,\eu,\lu,\muu\}$};        
                 \node[above left] at (g) {$\{\au,\bu,\fu,\gu,\nuu\}$};    
                       \node[above right] at (j) {$\{\du,\eu,\lu,\muu,\ou\}$};     
               \node[below] at (mBelow) {$\{\au,\fu,\du,\lu\}$};   
             
         
             \tikzset{
    every node/.style={
        circle,
        draw,
        fill          = gray!50,
        inner sep     = 0pt,
        minimum width =4 pt
    }   }

\path[draw] 

       
       node[fill=white] at (a) {} 
       node[fill=white] at  (b) {}
       node[fill=black] at  (bc) {}
       node[fill=white]  at  (c) {} 
       node[fill=white]  at  (d) {}
       node[fill=black] at  (de)  {}
        node[fill=black] at  (ef)  {}  
       node[fill=white] at  (f) {}
        node[fill=black] at  (g) {}
          node[fill=black] at  (j) {}
          node[fill=black] at  (m) {};
           

    \draw[color=black] (a) -- (b) ;
    \draw[color=black] (a) -- (c);
     \draw[color=black] (a) -- (bc);
    \draw[color=black] (b) -- (d);
    \draw[color=black] (b) -- (de);
    \draw[color=black] (c) -- (ef);
    \draw[color=black]  (c) -- (f);
    \draw[color=black]  (d) -- (g);
      \draw[color=black]  (f) -- (j);
        \draw[color=black]  (f) -- (m);
          \draw[color=black]  (d) -- (m);

\end{tikzpicture}
\end{center}
\caption{A subset $S$ of a history $\eh$ indexed by a galled tree with one recombinant, for which $|\B_1(S)|=2$ and $B_1(\eh)=\emptyset$.  Nodes corresponding to elements of $S$ are shown in black; the remaining nodes are shown in white.}
\label{Fig:Subsample_Counterexample}
\end{figure}
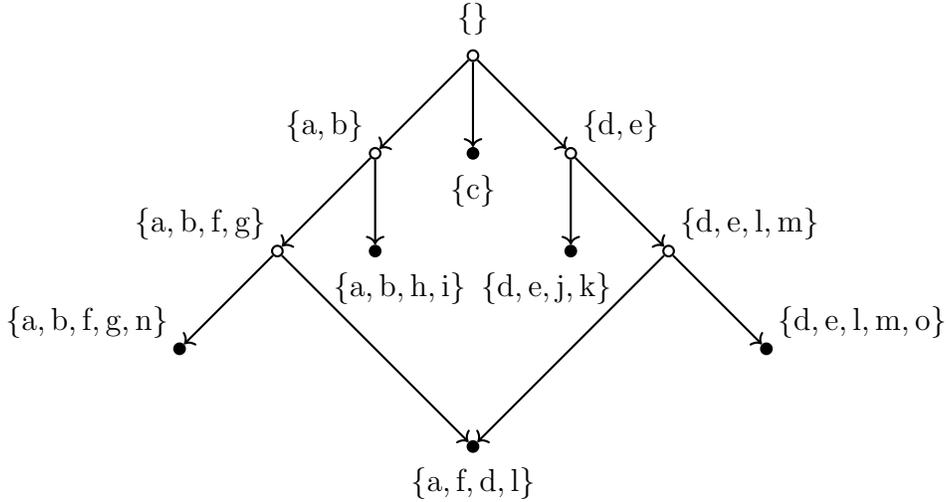
\end{example}

\section{Relaxing the Complete Sampling and Galled Tree Assumptions}\label{Sec:Extensions}
\cref{Thm:Lower_Bound}, the main result of the previous section, holds under the assumption that our evolutionary history is indexed by a galled tree, and that all organisms in the history have been sampled.  In this section, we apply the stability of persistent homology to extend the theorem to the case of an arbitrary (noisy) subsample of a history indexed by an arbitrary phylogenetic graph.

\subsection{Relaxing the Complete Sampling Assumption}\label{Sec:Relaxing_Complete_Sampling}
First, we extend \cref{Thm:Lower_Bound} to the case of a noisy subsample.  Given a list of non-negative numbers $L$, let $\Trim(L,\delta)$ be the list obtained by removing each of the numbers less than or equal to $\delta$ and subtracting $\delta$ from each of the remaining numbers.  

\begin{corollary}
\label{Cor:GalledTreeDetInference_withnoise}
Let $\eh$ be a history indexed by a galled tree and let $S$ be a finite metric space with $d_{GH}(\eh,S)=\delta$.  Then 
\begin{enumerate}[(i)]
\item $\textup{Trim}(\lens(\B_1(S)),2\delta)\leq \T(\eh).$
\item For $i\geq 2$, each interval of $\B_i(S)$ has length at most $2\delta$.
\end{enumerate}
\end{corollary}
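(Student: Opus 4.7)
The plan is to deduce both parts from the stability of persistent homology (\cref{Thm:Stability}) together with \cref{Thm:Lower_Bound}. Stability gives $d_B(\B_i(S),\B_i(\eh))\leq d_{GH}(S,\eh)=\delta$ for every $i$, and \cref{Thm:Lower_Bound} describes the right-hand barcodes: $\B_i(\eh)=\emptyset$ for $i\geq 2$, and $\lens(\B_1(\eh))\leq \T(\eh)$.

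For part (ii), since $\B_i(\eh)=\emptyset$, the bottleneck matching between $\B_i(S)$ and $\B_i(\eh)$ must send every interval of $\B_i(S)$ to the diagonal. By the standard definition of $d_B$, matching an interval of length $\ell$ to the diagonal incurs cost $\ell/2$. Hence every interval of $\B_i(S)$ satisfies $\ell/2\leq \delta$, giving $\ell\leq 2\delta$.

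For part (i), fix a $\delta$-bottleneck matching $\mu$ between $\B_1(S)$ and $\B_1(\eh)$. Any interval $I\in \B_1(S)$ of length $>2\delta$ cannot be matched to the diagonal (this would cost more than $\delta$), so $\mu$ pairs $I$ with some $J_I\in\B_1(\eh)$, and the pairing is injective. Since the endpoints of $I$ and $J_I$ differ by at most $\delta$, we get $\len(I)-2\delta\leq\len(J_I)$. Thus the sorted list $\Trim(\lens(\B_1(S)),2\delta)$ is pointwise dominated by a sorted sublist of $\lens(\B_1(\eh))$, which in turn is $\leq \T(\eh)$ by \cref{Thm:Lower_Bound}\,(i); a short bookkeeping step using \cref{Prop: Rel_Spanning_Tree_Indexwise_Min}-style reasoning (that dropping entries or decreasing them preserves the $\leq$ relation on sorted lists) yields the desired inequality.

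The argument is essentially routine once stability is invoked, so the main (minor) obstacle is just confirming that the bottleneck matching interacts correctly with the $\Trim$ operation: specifically, that unmatched short intervals are exactly those removed by $\Trim(\cdot,2\delta)$, and that the shift by $2\delta$ correctly absorbs the endpoint perturbations of size at most $\delta$ on each side. No new topological input beyond \cref{Thm:Stability,Thm:Lower_Bound} is required.
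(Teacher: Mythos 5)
Your proposal is correct and takes essentially the same approach as the paper: the paper's own proof consists of the single line that the corollary ``follows immediately from \cref{Thm:Lower_Bound,Thm:Stability}'', and your argument simply makes explicit the bottleneck-matching bookkeeping (long intervals of $\B_i(S)$ must match off-diagonal, endpoint shifts of at most $\delta$ give $\len(I)-2\delta\leq\len(J_I)$, and an injective domination of multisets implies the sorted-list inequality) that the paper leaves implicit. No gaps.
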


\begin{proof}
This follows immediately from \cref{Thm:Lower_Bound,Thm:Stability}.
\end{proof}

\subsection{Relaxing the Galled Tree Assumption}\label{Sec:Relaxing_Galled_Tree}

As an application of \cref{Cor:GalledTreeDetInference_withnoise}\,(i), we next also relax the assumption that $\eh$ is indexed by a galled tree, yielding a further extension of \cref{Thm:Lower_Bound} which applies to any phylogenetic graph.  

For $G$ any phylogenetic graph and $\eh$ a history indexed by $G$, let \[\bigcup \eh:=\bigcup_{v\in V} \eh_v.\]  Thus, $\bigcup \eh$ is set of all mutations appearing in the history $\eh $. 

For $M\subseteq \bigcup\eh $ any subset and $v\in V$, let $\eh^M_v = \eh_v\setminus M$.  Let $G^M$ denote a subgraph of $G$ obtained by removing edges as follows:
Suppose $w$ is a recombinant of $G$ with parents $u$, $v$.  If $\eh^M_{w}=\eh^M_{u}\ne \eh^M_{v}$, we remove the edge $(v,w)$ from $G$.  If $\eh^M_{w}=\eh^M_{u}= \eh^M_{v}$ we remove exactly one of the edges $(u,w)$ and $(v,w)$, choosing arbitrarily.  It is easy to check that the sets $\eh^M_v$ then give a well-defined evolutionary history $\eh^M$ indexed by $G^M$.  

\begin{definition}
We let \[\Gall(\eh):=\min\,\left\{ |M|\ \middle|\  M\subseteq \bigcup \eh \textup{ such that } G^M\textup{ can be be chosen to be a galled tree} \right\}.\]
Informally, $\Gall(\eh)$ is the number of mutations in $\eh$ which must be ignored to obtain a history indexed by a galled tree by pruning edges in $G$.
\end{definition}

The following is our most general result relating barcodes and recombination:

\begin{corollary}
\label{Cor:Inference_Arbitrary_G}
Let $\eh$ be a history indexed by an arbitrary phylogenetic graph $G$, and let $S$ be a finite metric space with $d_{GH}(\eh,S)= \delta$.  Then
\begin{enumerate}[(i)]
\item $\textup{Trim}(\lens(\B_1(S)),3 \Gall(\eh)+2\delta)\leq \mathcal \T(\eh).$
\item For $i\geq 2$, each interval of $\B_i(S)$ has length at most $2(\Gall(\eh)+\delta)$.
\end{enumerate}
\end{corollary}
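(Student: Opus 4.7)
The plan is to reduce the statement to the galled-tree case handled by \cref{Cor:GalledTreeDetInference_withnoise} by approximating $\eh$ with $\eh^M$, and then to patch up the resulting bound on $\T(\eh^M)$ using a stability argument on $0$-dimensional barcodes of an intermediate filtration. Let $M \subseteq \bigcup \eh$ realize the minimum in the definition of $\Gall(\eh)$, so $|M| = \Gall(\eh)$ and $\eh^M$ is a history on a galled tree $G^M$. The identity $\eh^M_u \triangle \eh^M_v = (\eh_u \triangle \eh_v) \setminus M$ gives $|d(\eh_u,\eh_v) - d(\eh^M_u,\eh^M_v)| \leq |M|$ for all $u,v$, so the correspondence $\{(\eh_v,\eh^M_v) : v \in V\}$ witnesses $d_{GH}(\met{\eh},\met{\eh^M}) \leq \Gall(\eh)/2$. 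By the triangle inequality, $d_{GH}(\met{\eh^M}, S) \leq \Gall(\eh)/2 + \delta$, so \cref{Cor:GalledTreeDetInference_withnoise} applied to $\eh^M$ and $S$ immediately yields part (ii), since each interval of $\B_i(S)$ for $i \geq 2$ has length at most $\Gall(\eh)+2\delta \leq 2(\Gall(\eh)+\delta)$, and also gives the intermediate bound
\[
\Trim(\lens(\B_1(S)), \Gall(\eh)+2\delta) \leq \T(\eh^M).
\]

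To finish (i) it suffices to prove the key estimate $\Trim(\T(\eh^M), 2\Gall(\eh)) \leq \T(\eh)$. I would introduce the intermediate filtration $\F^\dagger_r := F^{G^M} \cup \Rips(\met{\eh})_r$, where $F^{G^M}$ is the forest associated to $G^M$ as in \cref{Def:Top_Novelty_Profile}, and compare it both to $F^G \cup \Rips(\met{\eh})$ (whose $\B_0$ records $\T(\eh)$, up to zero-length intervals) and to $F^{G^M} \cup \Rips(\met{\eh^M})$ (whose $\B_0$ records $\T(\eh^M)$). The crucial observation is that each extra edge of $F^{G^M} \setminus F^G$ joins a recombinant $w$ of $G$ to a parent $u$ with $\eh^M_w = \eh^M_u$, whence $d(\eh_u,\eh_w) \leq |M|$, so this edge already belongs to $\Rips(\met{\eh})_r$ as soon as $r \geq \Gall(\eh)/2$. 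Consequently $F^G \cup \Rips(\met{\eh})_r = \F^\dagger_r$ for all $r \geq \Gall(\eh)/2$, which implies that $\lens(\B_0(\F^\dagger))$ and $\T(\eh)$ agree on all entries exceeding $\Gall(\eh)/2$ once both are sorted in descending order. Meanwhile, $\F^\dagger$ and $F^{G^M} \cup \Rips(\met{\eh^M})$ share the forest $F^{G^M}$ and their Rips parts are $(\Gall(\eh)/2)$-interleaved, so the stability of persistent homology (as in the proof of \cref{Prop:Topological_Novelty_Stability}) bounds the bottleneck distance---hence $d_\infty$, since both barcodes have the same number of intervals---between their $\B_0$ barcodes by $\Gall(\eh)/2$. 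A short calculation combining these two comparisons yields the desired estimate: for every index $i$ with $\T(\eh^M)_i > 2\Gall(\eh)$, the matched entry of $\lens(\B_0(\F^\dagger))$ exceeds $3\Gall(\eh)/2 > \Gall(\eh)/2$, hence lies in the agreeing regime with $\T(\eh)$, giving $\T(\eh^M)_i - 2\Gall(\eh) \leq \T(\eh)_i$.

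Composing $\Trim(\lens(\B_1(S)), \Gall(\eh)+2\delta) \leq \T(\eh^M)$ with $\Trim(\T(\eh^M), 2\Gall(\eh)) \leq \T(\eh)$ gives $\Trim(\lens(\B_1(S)), 3\Gall(\eh)+2\delta) \leq \T(\eh)$, completing (i). The main obstacle is the intermediate-filtration step: although the three filtrations involved have different base forests and different metrics, the discrepancy is confined to the low-persistence regime $r < \Gall(\eh)/2$, and one must carefully verify that this low-persistence noise does not contaminate the comparison in the large-value regime. Checking that the interval matchings align correctly, and that the entry-wise agreement on large intervals composes cleanly with the bottleneck bound after sorting, is where most of the work lies.
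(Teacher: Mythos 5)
Your proposal is correct, and its first half coincides with the paper's proof: you choose $M$ realizing $\Gall(\eh)$, bound $d_{GH}(\met{\eh},\met{\eh^M})$, apply the triangle inequality, and invoke \cref{Cor:GalledTreeDetInference_withnoise} --- in fact your correspondence argument gives the sharper bound $d_{GH}(\met{\eh},\met{\eh^M})\leq \Gall(\eh)/2$ where the paper settles for $\Gall(\eh)$, so your part (ii) and your intermediate constant $\Gall(\eh)+2\delta$ improve on the paper's $2(\Gall(\eh)+\delta)$. The second half is where you genuinely diverge: the paper dispatches the comparison of $\T(\eh^M)$ with $\T(\eh)$ in one line, padding $\T(\eh^M)$ with zeros and citing \cref{Prop:Topological_Novelty_Stability} to get $d_\infty(\bar\T(\eh^M),\T(\eh))\leq |M|$, whereas you re-derive such a bound by hand via the hybrid filtration $\F^\dagger_r = F^{G^M}\cup \Rips(\met{\eh})_r$. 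Your longer route has a real virtue: \cref{Prop:Topological_Novelty_Stability} as stated requires both histories to be indexed by the \emph{same} phylogenetic graph, while $\eh$ and $\eh^M$ live on $G$ and $G^M$ with different forests, so the paper's zero-padding is silently absorbing a discrepancy (one must observe that $\eh^M$ is also a valid history on $G$ and that the retained edges into demoted recombinants have weight $0$ in $\met{\eh^M}$); your observation that each edge of $F^{G^M}\setminus F^G$ has $\met{\eh}$-length at most $|M|$ addresses exactly this point explicitly. Two caveats, neither fatal thanks to the slack you built into $\Trim(\T(\eh^M),2\Gall(\eh))$. First, you mix the Rips scale parameter with genetic distance: under the paper's convention an edge of length $d$ enters $\Rips$ at $r=d/2$, so your thresholds $\Gall(\eh)/2$ and $3\Gall(\eh)/2$ shift by a factor of $2$ depending on convention; redone in a single fixed scale the estimate still closes, since $\T(\eh^M)_i>2\Gall(\eh)$ forces the matched entry into the regime where $\B_0(\F^\dagger)$ and $\T(\eh)$ agree. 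Second, your justification of the passage from bottleneck to $d_\infty$ --- that the two barcodes have the same number of intervals --- can fail, since $\met{\eh^M}$ may have more distance-zero pairs than $\met{\eh}$; but because every $H_0$ interval is born at $0$, you can either pay the standard factor of two for unmatched bars, or, more cleanly, compare the sorted non-forest edge weights of minimum spanning trees rel $F^{G^M}$ under the two metrics via \cref{Prop: Rel_Spanning_Tree_Indexwise_Min} --- these lists have equal length and differ entrywise by at most $|M|$. Either repair stays within your $2\Gall(\eh)$ budget, and the composition of the two $\Trim$ inequalities then yields (i) exactly as you state.
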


\begin{proof}
Choose $M\subseteq \bigcup \eh$ and a galled tree $G^M$ as above, such that $|M|=\Gall(\eh)$.  Note that $d_{GH}(\eh^M,\eh) \leq |M|=\Gall(\eh)$, so by the triangle inequality, $d_{GH}(\eh^M,S)\leq \Gall(\eh)+\delta$.  (ii) then follows from \cref{Cor:GalledTreeDetInference_withnoise}\,(ii).  By \cref{Cor:GalledTreeDetInference_withnoise}\,(i),  
\begin{equation}
\textup{Trim}(\lens(\B_1(S)),2(\Gall(\eh)+\delta))\leq \T(\eh^M).
\label{Eq:Relax_Galled_Assumption}
\end{equation}
Letting  $\bar\T(\eh^M)$ be the vector of length $| \T(\eh)|$ obtained by adding some 0's to the end of  $\T(\eh^M)$, we have by \cref{Prop:Topological_Novelty_Stability} that $d_\infty(\bar \T(\eh^M),\T(\eh))\leq |M|$.  Together with \eqref{Eq:Relax_Galled_Assumption}, this implies that 
\[\textup{Trim}(\lens(\B_1(S)),3\Gall(\eh)+2\delta)\leq \T(\eh),\]
which gives (i).
\end{proof}

\begin{remark}\label{Rmk:Interpretation_of_general_result}
Clearly, for \cref{Cor:Inference_Arbitrary_G} to yield a strong bound, $\Gall(\eh)$ must be small.   One might expect $\Gall(\eh)$ to be small but non-zero when recombination events typically affect short genome tracts (e.g., when they are gene conversion events).
\end{remark}

\section{Random Histories Indexed by Galled Trees}\label{Sec:Probability}
The results we have presented so far have been deterministic.  In \cref{Sec:Independence} below, we observe that in a wide class of probabilistic models of genetic sequence evolution on galled trees, the intervals of the first persistence barcode are independent random variables.  Thus, to understand the statistical properties of these barcodes, it suffices to understand the special case that the galled tree is a source-sink loop.  In \cref{Sec:Sensitivity_Numerical}, we study this special case numerically, for one choice of probabilistic model.

\subsection{Independence of Intervals in the First Barcode}\label{Sec:Independence}
In this section, we assume the reader is familiar with basic elements of the measure-theoretic formulation of probability theory \cite{durrett2010probability} and with the definition of conditional independence given a random variable \cite[Chapter 5]{kallenberg2006foundations}.

\bparagraph{Notation}
Suppose $X$, $Y$, and $Z$ are random variables on the same probability space.  If $X$ is independent of $Y$, we write $X\Perp Y$.  If $X$ is independent of $Y$ given $Z$, we write $X\Perp Y \mid Z$.

For $P$ a poset and $p\in P$, let 
\[\nde(p):=\{q\in P\mid p\not\leq q \}.\]
(Here, $\nde$ stands for \emph{non-descendants}.)  In what follows, $P$ will often be the vertex set of a directed acylic graph, with the partial order induced by the graph.

If $\eh$ is an evolutionary history indexed by $G$, $V$ is the vertex set of $G$, and $S\subseteq V$, we write $\eh_S:=\{\eh_v\mid v\in S\}$.  Similarly, for $v,w\in V$, let 
$\eh_{v\setminus w}:=\eh_v\setminus \eh_w$, and $\eh_{v\cap w}:=\eh_v\cap \eh_w$.  We will also use these notation conventions in combination with one another, so that e.g., $\eh_{r\setminus (p\cap q)}$ is understood to denote $\eh_r\setminus (\eh_p\cap \eh_q).$  

\begin{definition}[Random History]
For $G$ a fixed phylogenetic graph with vertices $V$, a \emph{random (evolutionary) history} $\eh$ indexed by $G$ consists of the following data:
\begin{itemize}
\item A probability space $\Omega$.
\item A countable set $X_v$ for each $v\in V$, such that each element of $X_v$ is itself a set.  We equip $X_v$ with the discrete $\sigma$-algebra.  
\item For each $v\in V$, a random variable $\eh_v:\Omega\to X_v$ such that for each $\omega\in \Omega$, $\{\eh_v(\omega)\}_{v\in V}$ is an evolutionary history.
\end{itemize}
\end{definition}

\begin{definition}[Locally Markov History]
Suppose that $\eh$ is a random history indexed by a phylogenetic graph $G$ with vertices $V$.  
$\eh$ is said to be \emph{locally Markov} if for each $v\in V$, 
\begin{equation}\label{Eq:Local_Markov}
\eh_v \Perp \eh_{\nde(v)}  \mid \{\eh_p\mid p \textup{ a parent of } v\}.
\end{equation}
\end{definition}

A locally Markov history is a special case of a \emph{Bayesian network}, a widely used probabilistic model \cite{lauritzen1996graphical}.

The assumption that a random history is locally Markov is quite natural; informally, this says that the genome of each organism depends only on the genomes of its parents.  However, the next example shows that for $\eh$ a locally Markov history, it is not necessarily the case that the intervals in $\B_1(\eh)$ are independent.

\begin{example}\label{Ex:Cycles_Not_Independent}
In the locally Markov history $\eh$ of \cref{Fig:Markov_Counterexample}, the mutations from the top source-sink loop are passed down to the bottom source-sink loop, where they serve as ``instructions" for how clonal mutations occur in the bottom loop.  Thus, the intervals in $\B_1(\eh)$ associated to the two recombinants are not independent.

For each vertex $v\ne w$, $\eh_v$ is completely determined by its parents.  The top recombinant corresponds to an interval $[\frac{1}{2},1)$ in the first barcode with probability $\frac{1}{2}$, and to an empty interval with probability $\frac{1}{2}$.  If the top recombinant corresponds to $[\frac{1}{2},1)$, then the bottom recombinant corresponds to $[1,2)$; otherwise, the bottom recombinant corresponds to $[\frac{1}{2},1)$.
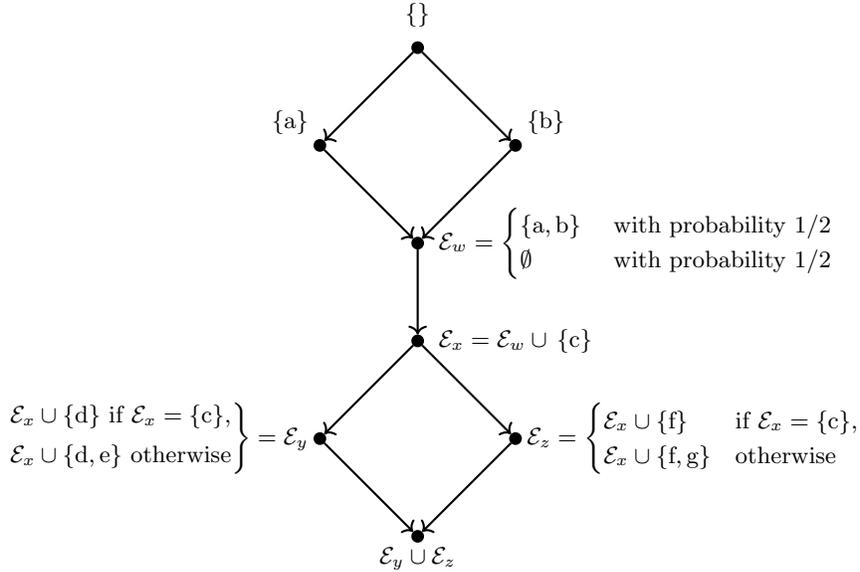
\begin{figure}[h]
\begin{center}
\begin{tikzpicture}[thick,scale=1.3,->,
                   shorten >=2pt+0.5*\pgflinewidth,
                   shorten <=2pt+0.5*\pgflinewidth]

       \coordinate (a) at (0,0);
          \coordinate (aAbove) at (0,.1);
       \coordinate (b) at (-1,-1);
        \coordinate (c) at (1,-1);
       \coordinate (d) at (0,-2);
        \coordinate (dRight) at (.1,-2);
       \coordinate (e) at (0,-3);
       \coordinate (eRight) at (.1,-3);
       \coordinate (f) at (-1,-4);
       \coordinate (g) at (1,-4);
        \coordinate (h) at (0,-5);
         \coordinate (below) at (-.1,-5);

          \node[above] at (aAbove) {\scriptsize$\{\}$};          
          \node[above left] at (b) {\scriptsize$\{\au\}$};                          
          \node[above right] at (c) {\scriptsize$\{\bu\}$};  
          \node[right] at (dRight) {\scriptsize $\eh_w=\begin{cases}\{\au,\bu\}&\textup{ with probability } 1/2\\ \emptyset&\textup{ with probability } 1/2\end{cases}$};  
  	 \node[right] at (eRight) {\scriptsize $\eh_x=\eh_w \cup\, \{\cu\}$};
	   \newenvironment{rcases}
  {\left.\begin{aligned}}
  {\end{aligned}\right\rbrace}
	     \node[left] at (f) {\scriptsize$\begin{rcases}&\eh_x\cup \{\du\}\textup{ if } \eh_x=\{\cu\},\\& \eh_x\cup \{\du,\eu\}\textup{ otherwise}\end{rcases}=\eh_y$};  
	      \node[right] at (g) {\scriptsize$\eh_z=\begin{cases}\eh_x\cup \{\fu\}&\textup{if } \eh_x=\{\cu\},\\ \eh_x\cup \{\fu,\gu\}&\textup{otherwise}\end{cases}$};  
	       \node[below] at (h) {\scriptsize $\eh_y\cup \eh_z$};
             \tikzset{
    every node/.style={
        circle,
        draw,
        fill          = gray!50,
        inner sep     = 0pt,
        minimum width =4 pt
    }   }

\path[draw] 

       
       node[fill=black] at (a) {} 
       node[fill=black] at  (b) {}
       node[fill=black] at  (c) {}
       node[fill=black]  at  (d) {} 
       node[fill=black]  at  (e) {}
       node[fill=black] at  (f) {}
        node[fill=black] at  (g) {}
        node[fill=black] at  (h) {};

    \draw[color=black] (a) -- (b) ;
    \draw[color=black] (a) -- (c);
     \draw[color=black] (b) -- (d);
     \draw[color=black] (c) -- (d);
     \draw[color=black] (d) -- (e);
      \draw[color=black] (e) -- (f);
       \draw[color=black] (e) -- (g);
       \draw[color=black] (f) -- (h);
       \draw[color=black] (g) -- (h);

\end{tikzpicture}
\end{center}
\caption{A locally Markov history $\eh$ for which the intervals in the $1^{\mathrm{st}}$ persistence barcode corresponding to the two recombinants are not independent.}
\label{Fig:Markov_Counterexample}
\end{figure}
\end{example}

Motivated by the above, we introduce the following subclass of locally Markov histories:

\begin{definition}[Phylogenetically Markov History]\label{Def:Phylo_Markov}
Suppose that $\eh$ is a random history indexed by a fixed phylogenetic graph $G$ with vertices $V$.  We say $\eh$ is \emph{phylogenetically Markov} if
\begin{enumerate}
\item[1.] $\eh$ is locally Markov.
\item[2.] $\eh_{v \setminus p} \Perp \eh_p$ for all clones $v\in V$ with parent $p$.
\item[3.] For all recombinants $r$ with parents $p$ and $q$, \[\eh_{r \setminus (p \cap q)} \Perp \eh_{\nde(r)} \mid \eh_{\{p\setminus (p\cap q),\,q\setminus (p\cap q)\}}.\]
\end{enumerate}

To parse condition 3, recall that a recombinant $r$'s genome necessarily inherits what is common to both parents $p$ and $q$; condition 3 states that the rest of $r$'s genome is independent of the genomes of all non-descendants of $r$, given the rest of the genomes of each parent.
\end{definition}

\begin{remark}  \cref{Def:Phylo_Markov} is slightly redundant, in the sense that condition 3 implies the local Markov property for each recombinant; this follows from \cref{Lem:Cond_Exp} below.  One might hope that one could obtain an equivalent definition by replacing condition 3 in \cref{Def:Phylo_Markov} with the simpler condition that $\eh_{r \setminus (p \cap q)} \Perp \eh_{p \cap q}$, but in fact this is strictly weaker.  It can be shown that our independence result (\cref{Prop:Markov_Independent} below) does not hold for this weaker condition.

%
%
%
%
%
%
%

\end{remark}

\begin{example}\label{Ex:Poisson}
Assume that $G$ is endowed with a time function $t:V\to \R$, as defined in \cref{Sec:NoveltyProfiles}.  We specify (up to choice of labels for mutations) a phylogenetically Markov history $\eh$, the \emph{Poisson history} indexed by $G$:
\begin{itemize}
\item $\eh_{r}=\emptyset$, for $r$ the root of $G$.
\item If $w$ is a clone with ancestor $v$, $|\eh_w\setminus \eh_v|$ is Poisson distributed with parameter $t(w)-t(v)$.
\item If $w$ is a recombinant with parents $u$ and $v$, then for each $m \in \eh_u \setminus \eh_v$, $P(m \in \eh_w)=p_w$, and for each $m \in \eh_v \setminus \eh_u$, $P(m \in \eh_w)=1-p_w$.  Here, we may either take $p_w=1/2$ for all $w$, or take the $p_w$ to be i.i.d. random variables with the uniform distribution on $[0,1]$.
\end{itemize}
\end{example}

Let \[\mathcal I:= \left\{[a,b)\mid a<b\in \{0,1,2,\ldots\}\right\} \cup \{\emptyset\}.\]  Thus, $\mathcal I$ is a collection of intervals with integer endpoints, together with the empty interval.
For $\eh$ a history indexed by a galled tree $G$ and $r$ a recombinant in $G$, let $\I^\eh(r)\in \I$ denote the unique interval in $\B_1(\eh)$ corresponding to $r$, if such an interval exists (see \cref{Thm:Lower_Bound}), and let $\I^\eh(r)=\emptyset$  otherwise.   As in \cref{Sec:Inference}, we let $\mathcal R^G$ denote the set of recombinants of $G$.

Here is the main result of this section:

\begin{theorem}\label{Prop:Markov_Independent}
For $\eh$ a phylogenetically Markov history indexed by a galled tree $G$, the random variables $\{\I^\eh(r)\}_{r\in \mathcal R^G}$ are independent.
\end{theorem}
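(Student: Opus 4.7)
The plan is to reduce the theorem to a loop-locality statement and a disjointness statement for the underlying genomic increments. For a recombinant $r\in\mathcal R^G$, let $L_r$ denote the unique directed loop in $G$ with sink $r$ and source $s_r$, and let $p,q$ denote the parents of $r$.

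I would first prove that $\I^\eh(r)$ is a deterministic function of the tuple $X_r := (\eh_v\setminus \eh_{s_r})_{v\in L_r}$. In a galled tree every common ancestor of $p$ and $q$ lies weakly above $s_r$, so by infinite sites $\eh_p\cap \eh_q=\eh_{s_r}$; together with $\eh_{s_r}\subseteq \eh_v$ for every $v\in L_r$ (which follows from condition 1 of \cref{Def:History} along either path from $s_r$ to $r$, and from $\eh_{s_r}=\eh_p\cap \eh_q\subseteq \eh_r$ for $v=r$), this yields $\eh_u\triangle \eh_v=(\eh_u\setminus \eh_{s_r})\triangle(\eh_v\setminus \eh_{s_r})$ for all $u,v\in L_r$, so the pairwise distances inside $\met{\eh^{L_r}}$, and hence by \cref{Prop:Decomposable_Populations}(ii), \cref{Lem:Loop_Almost_Linear}, and \cref{Thm:ALMS} the interval $\I^\eh(r)$, are determined by $X_r$. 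Tracing the iterated-sum build-up of $G$ further shows that any two distinct loops in a galled tree share at most one vertex, which must be the source of at least one of them, so the index sets $L_r\setminus\{s_r\}$ are pairwise disjoint as $r$ ranges over $\mathcal R^G$.

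To show the $X_r$ are jointly independent, I would construct jointly independent ``innovations'' $\{\epsilon_v\}_{v\in V}$ realising $\eh$ as a structural equations model, i.e., $\eh_v=f_v(\epsilon_v,\eh_{\mathrm{parents}(v)})$ for deterministic maps $f_v$. Condition 2 of \cref{Def:Phylo_Markov} makes $N_v:=\eh_v\setminus \eh_p$ independent of $\eh_p$ for each clone $v$ with parent $p$, so one can simply take $\epsilon_v=N_v$. For a recombinant $r$, condition 3 says the regular conditional distribution of $\eh_r\setminus \eh_{s_r}$ given all non-descendants of $r$ depends only on $(\eh_p\setminus \eh_{s_r},\eh_q\setminus \eh_{s_r})$; by countability of the genome state space, this distribution can be realised as $f_r(\epsilon_r,\eh_p\setminus \eh_{s_r},\eh_q\setminus \eh_{s_r})$ for a fresh $\epsilon_r\sim \mathrm{Uniform}[0,1]$ independent of everything previously constructed. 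A standard topological-order induction then gives joint independence of $\{\epsilon_v\}_{v\in V}$, and a second induction along the two paths from $s_r$ to $r$ shows that $X_r$ is a deterministic function of $\{\epsilon_v\}_{v\in L_r\setminus\{s_r\}}$. Combined with the disjointness from the first step, this gives joint independence of the $X_r$'s, and hence of the $\I^\eh(r)$'s. I expect the main obstacle to be the disintegration step in the recombinant case, where condition 3's conditional independence must be converted into an honestly independent noise variable $\epsilon_r$; this should be handled by a standard measurable-selection argument exploiting the countability of the state space.
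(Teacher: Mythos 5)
Your proposal is correct in outline, and its core argument is genuinely different from the paper's. The first reduction coincides: the paper likewise replaces $\I^\eh(r)$ by the distance matrix of the loop with sink $r$ and then by the tuple $A^r=\{\eh_{v\setminus q_r}\}_{v\in V^r\setminus \{q_r\}}$, which is exactly your $X_r$; and the paper, too, silently invokes your identity $\eh_p\cap \eh_q=\eh_{s_r}$ when it applies condition 3 of \cref{Def:Phylo_Markov} with the loop source in place of $\eh_{p\cap q}$ (so making this explicit is a genuine improvement, though your one-line justification ``common ancestors lie weakly above $s_r$'' is not quite a proof: one must use the minimum-element axiom of \cref{Def:History} to exclude a mutation present above $s_r$, lost at the sink of an upper loop, and re-arising on both arms --- the contradiction comes from routing a root-path around the other arm of the upper loop, avoiding the putative minimum). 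Where you diverge is the independence step. The paper stays on the original probability space: it proves $A^r \Perp \eh_{\nde(L^r)}$ by induction along each loop using the semi-graphoid calculus of \cref{Lem:Cond_Exp}, and then concludes via the poset-indexed independence criterion (\cref{Lem:Posets_And_Independence}) applied to a bespoke partial order on $\mathcal R^G$. You instead realize the law of $\eh$ as a structural equations model with exogenous innovations and conclude from disjointness of the index sets $L_r\setminus\{s_r\}$; this buys conceptual transparency (conditions 2 and 3 of \cref{Def:Phylo_Markov} are revealed as exactly the statement that the genomic increments admit edge-local noise) and delivers joint independence of all the $X_r$ in one stroke, with no need for the paper's partial order, at the cost of heavier measure theory (the Bayesian-network factorization from the local Markov property, regular conditional distributions, and a Kallenberg-style transfer/noise-outsourcing lemma --- all unproblematic here since the state spaces are countable). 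One step of your plan does need rephrasing: you cannot literally build all innovations on an extension of the original space so that each is ``independent of everything previously constructed'' and then induct, because a recombinant's noise variable can never be independent of the descendants of that recombinant, and earlier-added uniforms are not functions of $\eh_{\nde(v)}$, so your clone step ($N_v\Perp \eh_{\nde(v)}$, which does follow from conditions 1 and 2 by the contraction property, i.e., \cref{Lem:Cond_Exp}\,(iv)) does not by itself give independence from prior uniforms. The clean fix is distributional: construct a fresh history $\tilde\eh$ on a product of independent uniform coordinates using the conditional laws (well-defined by conditions 2 and 3), check $\tilde\eh$ has the law of $\eh$ via the factorization, and observe that joint independence of $\{\I^\eh(r)\}_{r\in\mathcal R^G}$ is a property of the law alone. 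With that adjustment your argument goes through and constitutes a valid alternative proof of \cref{Prop:Markov_Independent}.
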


The proof of the theorem will use several standard facts about conditional independence, which we record in the following lemma.
\begin{lemma}\label{Lem:Cond_Exp}
Assume $h$ is a measurable function whose domain is the codomain of the random variable $X$.
\begin{enumerate}[(i)]
\item If $X\Perp Y\mid Z$, then $Y\Perp X \mid Z$.
\item If $X\Perp Y\mid Z$, then $h(X)\Perp Y\mid Z$.
\item If $X\Perp Y\mid Z$, then $X\Perp Y\mid (Z,h(X))$.
\item  If $X\Perp Y\mid Z$ and $W\Perp Y\mid (X,Z)$, then $(W,X) \Perp Y\mid Z$.
\item If $X\Perp Y\mid Z$, then $(Z,X) \Perp Y\mid Z$.
\end{enumerate}
\end{lemma}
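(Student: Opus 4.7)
The plan is to prove all five properties directly from the standard characterization of conditional independence: $X \Perp Y \mid Z$ holds if and only if for every pair of bounded measurable real-valued functions $f$, $g$ on the codomains of $X$ and $Y$,
\[E[f(X)\, g(Y) \mid Z] = E[f(X) \mid Z] \cdot E[g(Y) \mid Z],\]
equivalently (via the tower property) iff $E[g(Y) \mid X, Z] = E[g(Y) \mid Z]$. The statements (i)--(v) are instances of the well-known \emph{graphoid axioms}; each follows from a short manipulation of conditional expectations, and I do not anticipate any substantive obstacle.

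Parts (i), (ii), and (v) are essentially immediate. For (i), the factorization above is symmetric in $f(X)$ and $g(Y)$. For (ii), any bounded measurable $\phi$ on the codomain of $h(X)$ pulls back to the bounded measurable function $\phi \circ h$ of $X$, so the factorization for $X$, $Y$ specializes. For (v), one shows more generally that $\sigma(X) \Perp \sigma(Y) \mid \sigma(Z)$ implies $\sigma(X, Z) \Perp \sigma(Y) \mid \sigma(Z)$: indicators $\mathbbm{1}_C$ with $C \in \sigma(Z)$ are $\sigma(Z)$-measurable and hence pull out of conditional expectations given $\sigma(Z)$, so the factorization for $\sigma(X)$-measurable functions extends at once to products $\mathbbm{1}_C\, f(X)$ and, by a monotone-class argument, to all bounded $\sigma(X, Z)$-measurable functions $F(Z, X)$.

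For (iii), set $U := h(X)$. By the tower property,
\[E[f(X)\, g(Y) \mid Z, U] = E\bigl[f(X)\, E[g(Y) \mid Z, X] \,\bigm|\, Z, U\bigr].\]
The hypothesis $X \Perp Y \mid Z$ gives $E[g(Y) \mid Z, X] = E[g(Y) \mid Z]$, which is $\sigma(Z)$-measurable and hence $\sigma(Z, U)$-measurable, so it pulls out to produce $E[g(Y) \mid Z] \cdot E[f(X) \mid Z, U]$. The same identity with $f \equiv 1$ yields $E[g(Y) \mid Z, U] = E[g(Y) \mid Z]$, so the result rearranges as $E[f(X) \mid Z, U] \cdot E[g(Y) \mid Z, U]$, the required factorization.

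For (iv), the argument is analogous: condition first on $(X, Z)$ using the tower property,
\[E[f(W, X)\, g(Y) \mid Z] = E\bigl[E[f(W, X)\, g(Y) \mid X, Z] \,\bigm|\, Z\bigr].\]
The hypothesis $W \Perp Y \mid (X, Z)$ factors the inner conditional expectation as $E[f(W, X) \mid X, Z] \cdot E[g(Y) \mid X, Z]$, and $X \Perp Y \mid Z$ collapses the second factor to $E[g(Y) \mid Z]$. Since the latter is $\sigma(Z)$-measurable, it pulls out of the outer conditional expectation, leaving $E[g(Y) \mid Z] \cdot E\bigl[E[f(W, X) \mid X, Z] \,\bigm|\, Z\bigr] = E[g(Y) \mid Z] \cdot E[f(W, X) \mid Z]$ by the tower property, which is the required factorization.
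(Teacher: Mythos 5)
Your proof is correct, but it takes a genuinely different route from the paper's. The paper does not prove (i)--(iv) at all: it cites the standard literature (Lauritzen's \emph{Graphical Models}) for those graphoid properties, and proves only (v), doing so purely formally from the other parts --- taking $W=Z$ in (iv) reduces (v) to showing $Z\Perp Y\mid (X,Z)$, which by (ii) follows from $(X,Z)\Perp Y\mid (X,Z)$, an instance of the trivial general fact $A\Perp B\mid A$. You instead prove all five statements from scratch via the factorization characterization of conditional independence, and your arguments go through: (i) and (ii) are immediate as you say; your monotone-class enlargement in (v), namely that $X\Perp Y\mid Z$ implies $(Z,X)\Perp Y\mid Z$ at the level of $\sigma$-algebras, is sound; and the tower-property computations in (iii) and (iv) are correct. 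One point to make explicit: in (iv), factoring $E[f(W,X)\,g(Y)\mid X,Z]$ requires applying $W\Perp Y\mid (X,Z)$ to a \emph{joint} function of $(W,X)$, which needs exactly the enlargement you proved in (v), with conditioning variable $(X,Z)$ in place of $Z$; the ingredient is present in your write-up, but you should cite it at that step rather than invoking the hypothesis directly. As for what each approach buys: the paper's treatment is three lines but presupposes (ii) and (iv) as known, whereas yours is self-contained, makes the measure-theoretic content visible, and proves (v) directly rather than as a corollary of the other parts --- useful for a reader who wants the lemma without chasing the reference.
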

Note that by taking $Z$ to be the identity random variable, we also obtain unconditional versions of (i)-(iv) above.

\begin{proof}
Properties (i)-(iv) appear in many places; see e.g. \cite[Chapter 3]{lauritzen1996graphical}.  We prove (v).  Taking $W=Z$ in (iv), it suffices to show that $Z\Perp Y \mid (X,Z)$.  By (ii), for this it is enough to show that $(X,Z) \Perp Y\mid (X,Z)$.  But it is easy to check that in general, $A\Perp B \mid A$.
\end{proof}  

\begin{lemma}\label{Lem:Posets_And_Independence}
If $\{X_a\}_{a\in P}$ is a collection of random variables indexed by a finite poset $P$ and $X_a \Perp X_{\nde(a)}$ for each $a\in P$, then the $\{X_a\}_{a\in P}$ are independent.
\end{lemma}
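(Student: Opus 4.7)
The plan is to prove this by induction on $|P|$. The base case $|P|=1$ is trivial, since a single random variable is tautologically ``independent.''

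For the inductive step, I will exploit the existence of a maximal element $m$ of $P$ (which exists since $P$ is finite). The key observation is that $\nde(m) = P \setminus \{m\}$: on the one hand, $m \notin \nde(m)$ since $m \leq m$; on the other hand, for any $q \neq m$, maximality of $m$ forbids $m \leq q$, so $q \in \nde(m)$. Applying the hypothesis at $a = m$ therefore gives
\[
X_m \Perp \{X_a\}_{a \in P \setminus \{m\}}.
\]

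Next I will set $P' := P \setminus \{m\}$ and verify that the hypothesis of the lemma holds on $P'$, so that the inductive hypothesis applies. For $a \in P'$, clearly $\nde_{P'}(a) = \nde_P(a) \cap P' \subseteq \nde_P(a)$. Since $X_a \Perp X_{\nde_P(a)}$ by assumption, restricting to the subcollection indexed by $\nde_{P'}(a)$ preserves independence (this is a standard consequence of \cref{Lem:Cond_Exp}\,(ii), applied to the coordinate projection). Hence $X_a \Perp X_{\nde_{P'}(a)}$, so the inductive hypothesis yields that $\{X_a\}_{a \in P'}$ are jointly independent.

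Finally, I will combine the two facts: the joint distribution of $\{X_a\}_{a \in P}$ factors as the product of the distribution of $X_m$ and the joint distribution of $\{X_a\}_{a \in P'}$ (by the first display), and the latter factors as $\prod_{a \in P'} P(X_a)$ (by the induction). Multiplying gives $\prod_{a \in P} P(X_a)$, which is joint independence of the full collection. There is no serious obstacle in this argument; the only mildly subtle point is recognizing that the hypothesis restricts cleanly to $P'$, which is immediate once one notes that passing to a subposet can only shrink $\nde(a)$.
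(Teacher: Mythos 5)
Your proof is correct and is essentially the paper's argument in reverse: the paper fixes a linear extension of $P$ and shows by induction that each prefix $X_1,\ldots,X_m$ is independent (each new element being maximal among those added so far), while you peel off a maximal element and restrict to the subposet, which amounts to running the same linear-extension induction from the top. Both hinge on the same two facts you use---that independence from a joint family passes to subfamilies, and that $X_m \Perp (X_a)_{a\in P'}$ together with mutual independence of $\{X_a\}_{a\in P'}$ yields mutual independence of the full family.
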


\begin{proof}
Choose a total order compatible with the partial order on $P$, and relabel the random variables with respect to this order as $X_1,\ldots,X_{|P|}$.  We show by induction that
$X_1,\ldots, X_m$ are independent  for each $m\in \{1,\ldots, |P|\}$.  The base case is trivial.  Now suppose $X_1,\ldots, X_{m-1}$ are independent.  The elements of $P$ corresponding to the indices $1,\ldots, m-1$ are in $\nde(m)$, so $X_m$ is independent of $X_1,\ldots, X_{m-1}$.  By this and the induction hypothesis, $X_1,\ldots, X_m$ are independent.
\end{proof}

\begin{proof}[Proof of \cref{Prop:Markov_Independent}]
Order the vertices of $G$ arbitrarily.  For $r\in \mathcal R^G$, let $D^r$ denote the distance matrix obtained by restricting $\eh$ to the source-sink loop $L^r$ of $G$ with sink $r$.  The images of independent random variables under measurable functions remain independent, so in view of the results of \cref{Sec:Barcodes_Of_Histories_Indexed_By_Galled_Trees}, it suffices to show that the $\{D^r\}_{r\in \mathcal R^G}$ are independent.  Now for each recombinant $r$, let $V^r$ denote the vertices of $L^r$, and let $q_r$ denote the unique source of $L^r$.  Since $\eh_{q_r}\subseteq \eh_v$ for all $v\in V^r$, clearly $D^r$ is determined by $A^r:=\{\eh_{v\setminus q_r}\}_{v\in V^r\setminus \{q_r\}}$.  Therefore, it in fact suffices to show that the random sets $\{A^r\}_{r\in \mathcal R^G}$ are independent.  

We define a partial order on $\mathcal R^G$ by writing $r\leq r'$ if for some $v\in V^r\setminus \{q_r\}$, there is a directed path from $v$ to $q_r'$ in $G$; it is easy to check that this is in fact a partial order.  This partial order induces a partial order on $\{A^r\}_{r\in \mathcal R^G}$.   We establish the independence of the $\{A^r\}_{r\in \mathcal R^G}$ by applying \cref{Lem:Posets_And_Independence}, using this partial order.  Let \[\nde(L^r):=\bigcap_{v\in V^r\setminus \{q_r\}} \nde(v).\]  If $r\not \leq r'$ then $V^{r'}\subseteq \nde(L^r)$, so $\{A^{r'} \mid A^r \not \leq A^{r'}\}$ is completely determined by $\eh_{\nde(L^r)}$.  Thus, it suffices to show that $A^r \Perp \eh_{\nde(L^r)}$ for each $r\in \mathcal R^G$.   

Let us fix $r\in \mathcal R^G$ and write $q=q^r$.  Choose a total order on $V^r\setminus \{r,q\}$ compatible with the partial order on $V$, and write the elements in increasing order as $\{c_1,\ldots,c_m\}$.  For $j\in \{1,\ldots,m\}$, let $B_j:=\{\eh_{c_i\setminus q}\}_{1\leq i\leq j}\subset A^r$.  We show by induction on $j$ that $B_j \Perp \eh_{\nde(L^r)}$ for each $j$.  

First, consider the base case $j=1$.   In the remainder of the proof, the five statements of \cref{Lem:Cond_Exp} will be denoted simply as (i)-(v).  By the definition of a phylogenetically Markov history, we have 
$\eh_{c_1} \Perp \eh_{\nde(c_1)} \mid \eh_q$, so by (ii), we have $\eh_{c_1} \Perp \eh_{\nde(L^r)} \mid \eh_q$.  By (v) then, $\eh_{\{c_1,q\}} \Perp \eh_{\nde(L^r)} \mid \eh_q$, so by (ii), $\eh_{c_1\setminus q} \Perp \eh_{\nde(L^r)} \mid \eh_q$.  The definition of a phylogenetically Markov history also gives that $\eh_{c_1 \setminus q} \Perp \eh_q$.  Applying (iv) and (ii), we find that $\eh_{c_1\setminus q} \Perp \eh_{\nde(L^r)}$.  This shows that $B_1 \Perp \eh_{\nde(L^r)}$.

The induction step  is similar to the above.  Let $p$ denote the parent of $c_j$.  $\eh_{c_j} \Perp \eh_{\nde(c_j)} \mid \eh_p$, so $\eh_{c_j\setminus p} \Perp \eh_{\nde(c_j)} \mid \eh_p$.  Moreover, $\eh_{c_j \setminus p} \Perp \eh_p$, so $\eh_{c_j\setminus p} \Perp \eh_{\nde(c_j)}$.  Then by (ii) and (iii), $\eh_{c_j\setminus p} \Perp \eh_{\nde(L^r)} \mid B_{j-1}$.  By (iv) and the induction hypothesis, we thus have that $B_{j} \Perp \eh_{\nde(L^r)}$, as desired.

Finally, we show that $A^r \Perp \eh_{\nde(L^r)}$.  Let $p_1$ and $p_2$ denote the parents of $r$.  By the third condition in the definition of a phylogenetically Markov history, we have 
$\eh_{r\setminus q} \Perp \eh_{\nde(r)}\mid \eh_{\{p_1\setminus q,\ p_2\setminus q\}}$.  By (iii), $\eh_{r\setminus q} \Perp \eh_{\nde(r)}\mid (\eh_{\{p_1\setminus q,\ p_2\setminus q\}},B_m)$.  $\sigma(\eh_{\{p_1\setminus q,\ p_2\setminus q\}},B_m)=\sigma(B_m)$ since $\eh_{\{p_1\setminus q,\ p_2\setminus q\}}= h(B_m)$ for some measurable function $h$, so $\eh_{r\setminus q} \Perp \eh_{\nde(r)}\mid B_m$ by the definition of conditional independence.  By (ii), $\eh_{r\setminus q} \Perp \eh_{\nde(L^r)}\mid B_m$.  We have also shown that $B_m \Perp \eh_{\nde(L^r)}$, so by (iv), $A^r \Perp \eh_{\nde(L^r)}$.
\end{proof}

\subsection{The Barcode of a Random History on a Source-Sink Loop: Numerical Results}\label{Sec:Sensitivity_Numerical}
\cref{Prop:Markov_Independent} tells us that for a phylogenetically Markov history indexed by a galled tree, to understand the distribution of the $1^{\mathrm{st}}$ barcode, it suffices to understand this for each source-sink loop in the galled tree.  Working with a simple random model of a history $\eh$ indexed by a source-sink loop, we now study the distribution of $\B_1(\eh)$ numerically.  Recall that by \cref{Thm:ALMS}\,(ii), $\B_1(\eh)$ has at most one interval.  
We consider here the probability that $\B_1(\eh)$ is nontrivial, as well as the average length of the interval.  

In our simulations, we find that in the limit of high novelty, persistent homology captures between 14\% and 35\% of recombination events, the exact value depending on mutational parameters. In typical simulations where a recombination event is detected, the bar length is well below the theoretical maximum provided by \cref{Thm:Lower_Bound}\,(i), scaling roughly as the square root of novelty.

\bparagraph{Details of the Computations}
We now specify the random model of a history indexed by a source-sink loop that we use in our simulations.  The model depends on parameters $m$ and $k$.  Each random history generated by this model consists of: a left parent with no mutations; a right parent with mutations $\{1, \dots, m\}$; a recombinant with some subset of these mutations; and $k$ ``intermediate sequences,'' each randomly sampled (with replacement) from the set \[\left\{\{1\}, \{1,2\}, \{1,2,3\}, \dots, \{1, \dots, m-1\} \right\}.\] In our simulations, we consider values of $m$ between 2 and 200, and values of $k$ between $1$ and $50$. In addition, we consider a ``maximal sampling'' scenario, in which all possible intermediate sequences were included in the sample; for the purpose of visualization, this scenario is assigned parameter value $k=51$.  To construct the recombinant, we select each of the $m$ mutations with probability $\alpha$.  In one set of our simulations, we set $\alpha=0.5$ (simulating a recombination breakpoint at the midpoint of the genome); in a second set of simulations, we choose $\alpha$  randomly from the uniform distribution on $[0,1]$. For each sampled history, we compute both the novelty of the recombinant and the persistent homology of the sample.

\bparagraph{Results}  The results of our simulations are given in figure \cref{Fig:simulations}.  To obtain each subfigure, we aggregated the data for the various values of the parameter $m$.  We see that the rate of detection of a recombinant increases with novelty, up to about 37\% (midpoint recombination breakpoint, \cref{Fig:simulations}\,(a)) or 20\% (uniform recombination breakpoint, \cref{Fig:simulations}\,(b)). For high novelty, increasing $k$ improves detection only up to about $k=7$, after which detection falls to about 28\% (midpoint recombination breakpoint) or 16\% (uniform recombination breakpoint) for high $k$.

Bar length typically falls well below the upper bound given by the novelty of the recombinant. In particular, for simulations where the recombination event was detected, bar length scales roughly as the square root of novelty (\cref{Fig:simulations}\,(c,d)). For cases with high novelty, median bar length ranges from about 25\% of the square root of novelty (if many intermediate sequences are sampled, upper right corner of \cref{Fig:simulations}\,(e,f)) to 35\% of the square root of novelty (if few intermediate sequences are sampled, upper left corner of \cref{Fig:simulations}\,(e,f)).

\begin{figure}
\centering
\begin{tabular}[c]{cc}
	\begin{subfigure}{0.42\textwidth}
		\includegraphics[width=1\linewidth]{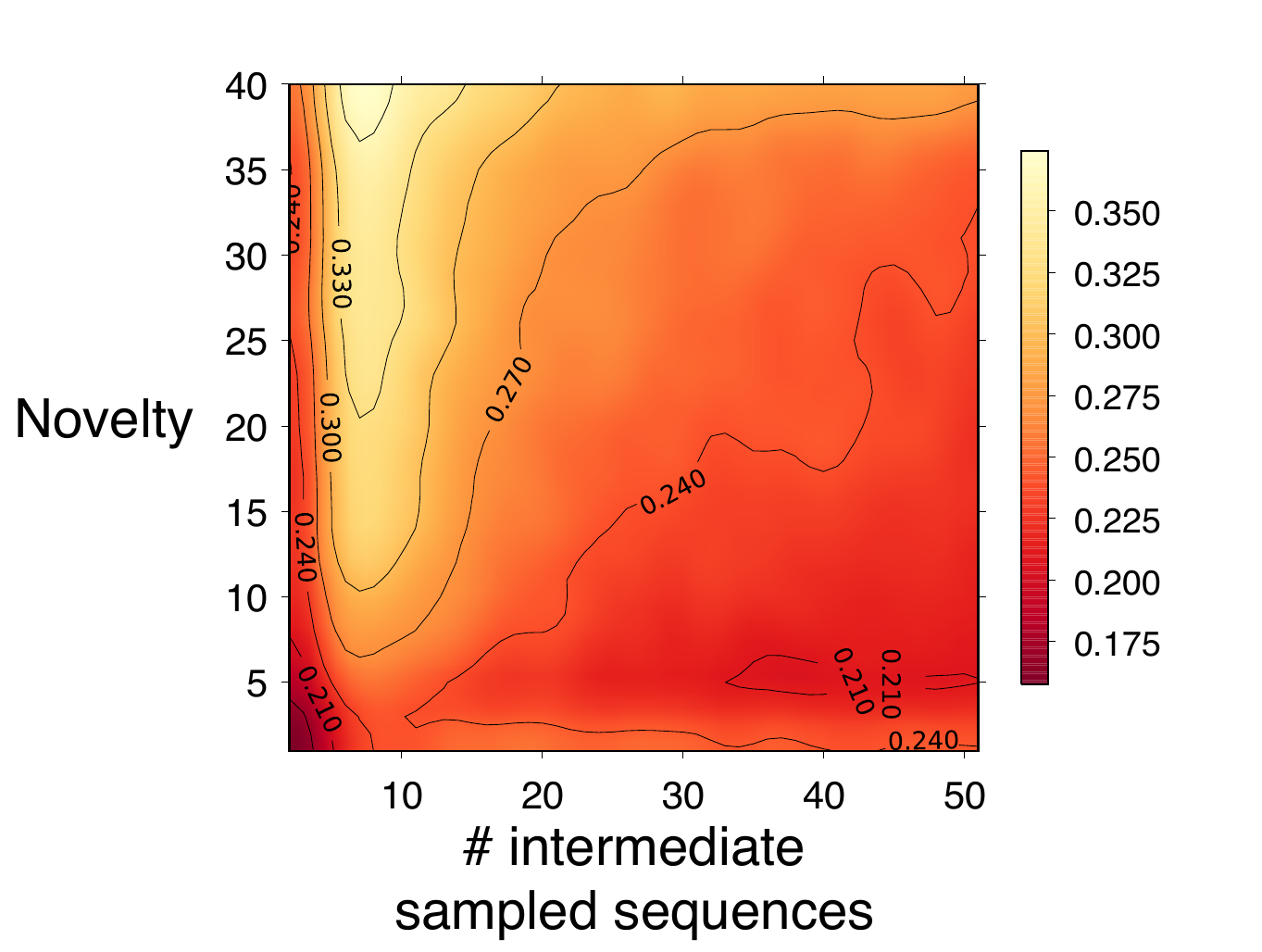}
	\end{subfigure} &
	\begin{subfigure}{0.42\textwidth}
		\includegraphics[width=1\linewidth]{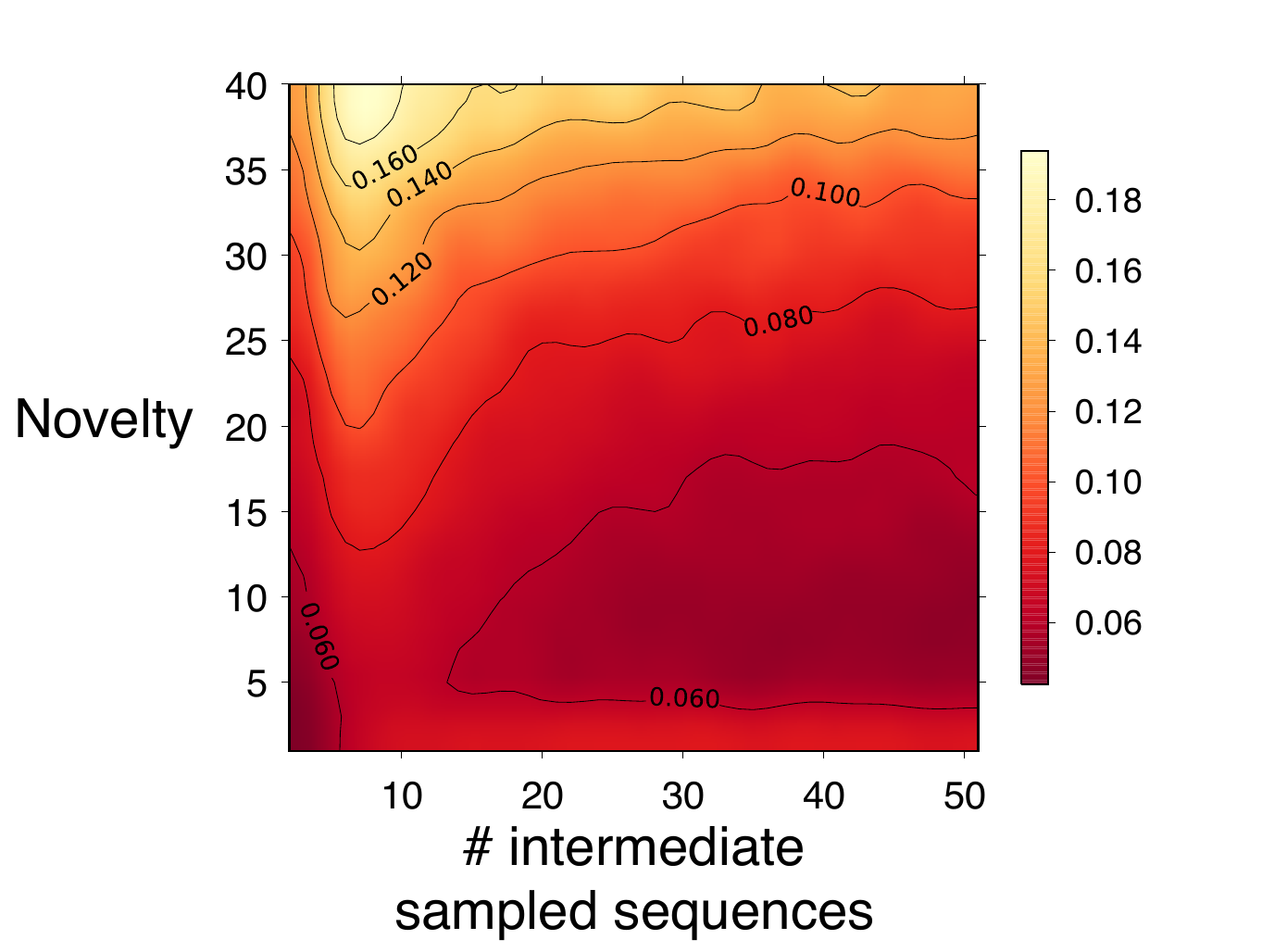}
	\end{subfigure} \\
	(a) & (b) \\
	
	\begin{subfigure}{0.42\textwidth}
		\includegraphics[width=1\linewidth]{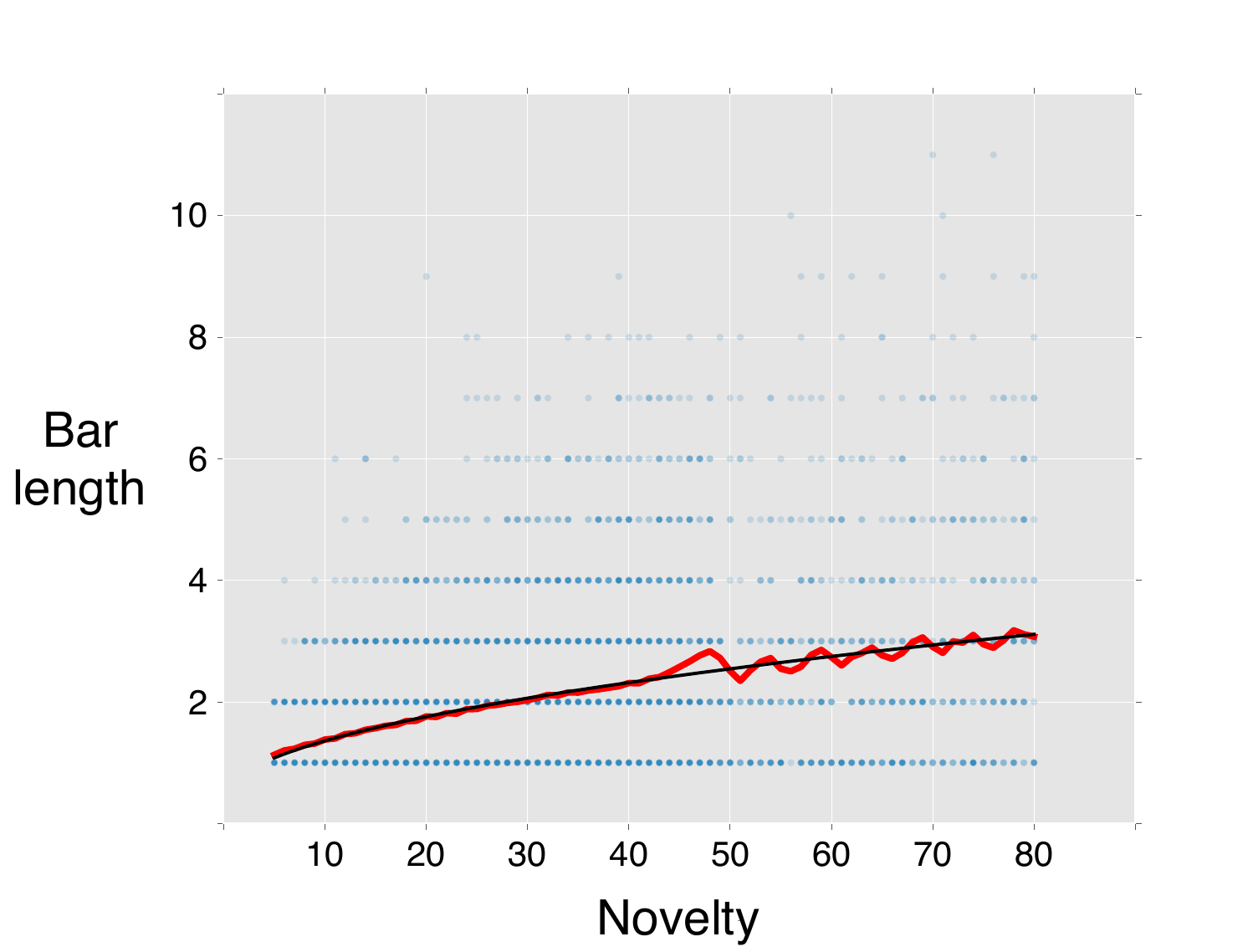}
	\end{subfigure} &
	\begin{subfigure}{0.42\textwidth}
		\includegraphics[width=1\linewidth]{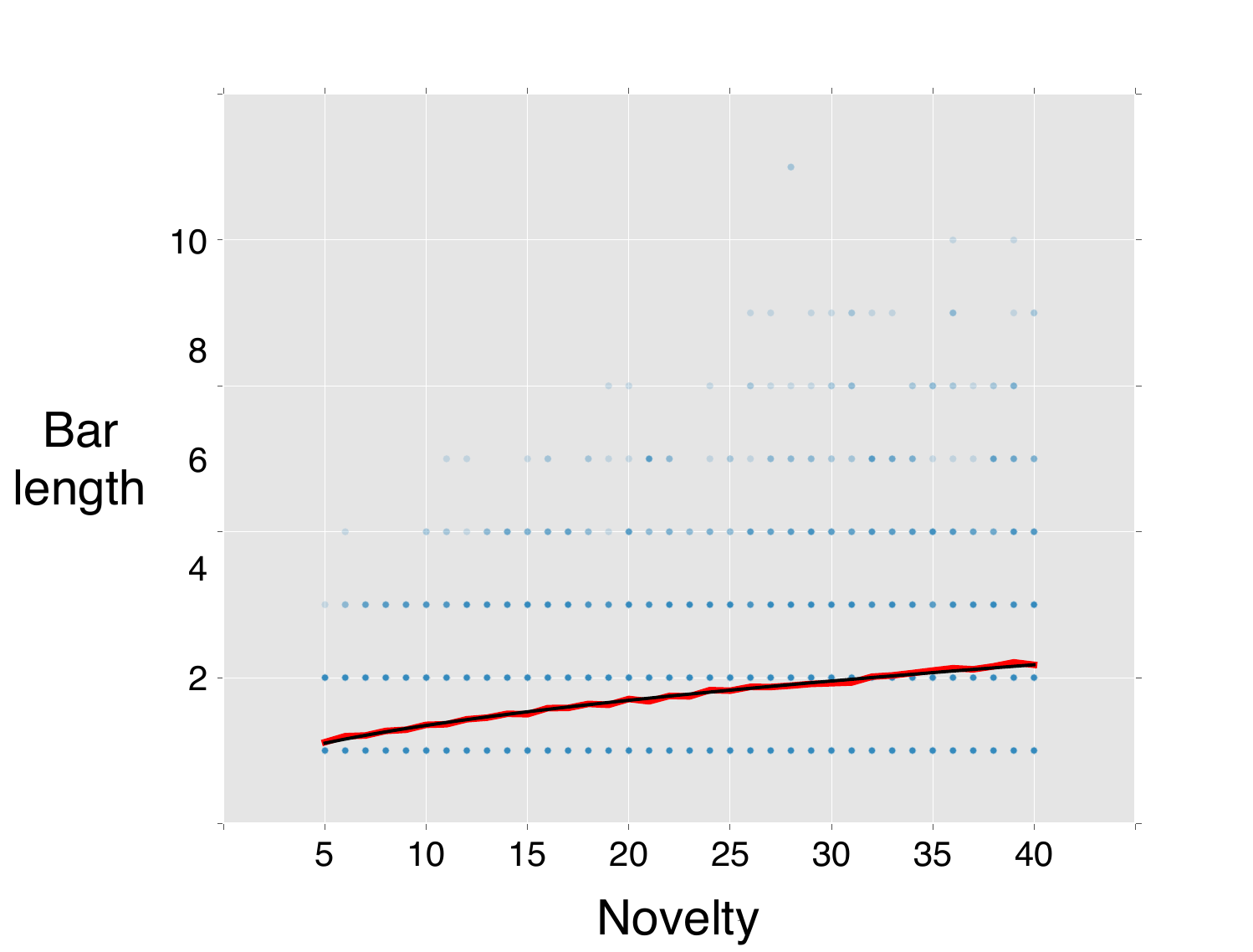}
	\end{subfigure} \\
	(c) & (d) \\
	
	\begin{subfigure}{0.42\textwidth}
		\includegraphics[width=1\linewidth]{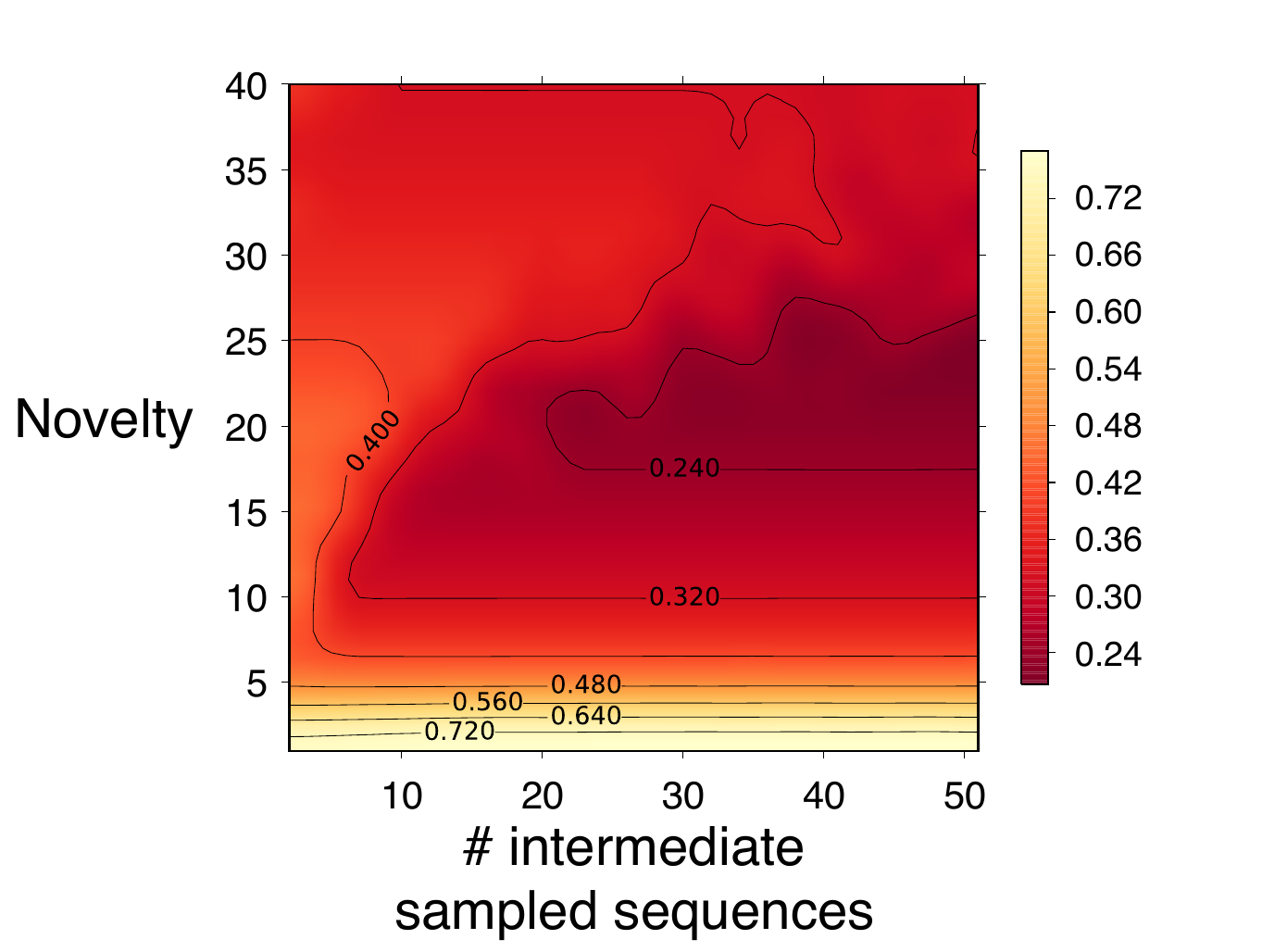}
	\end{subfigure} &
	\begin{subfigure}{0.42\textwidth}
		\includegraphics[width=1\linewidth]{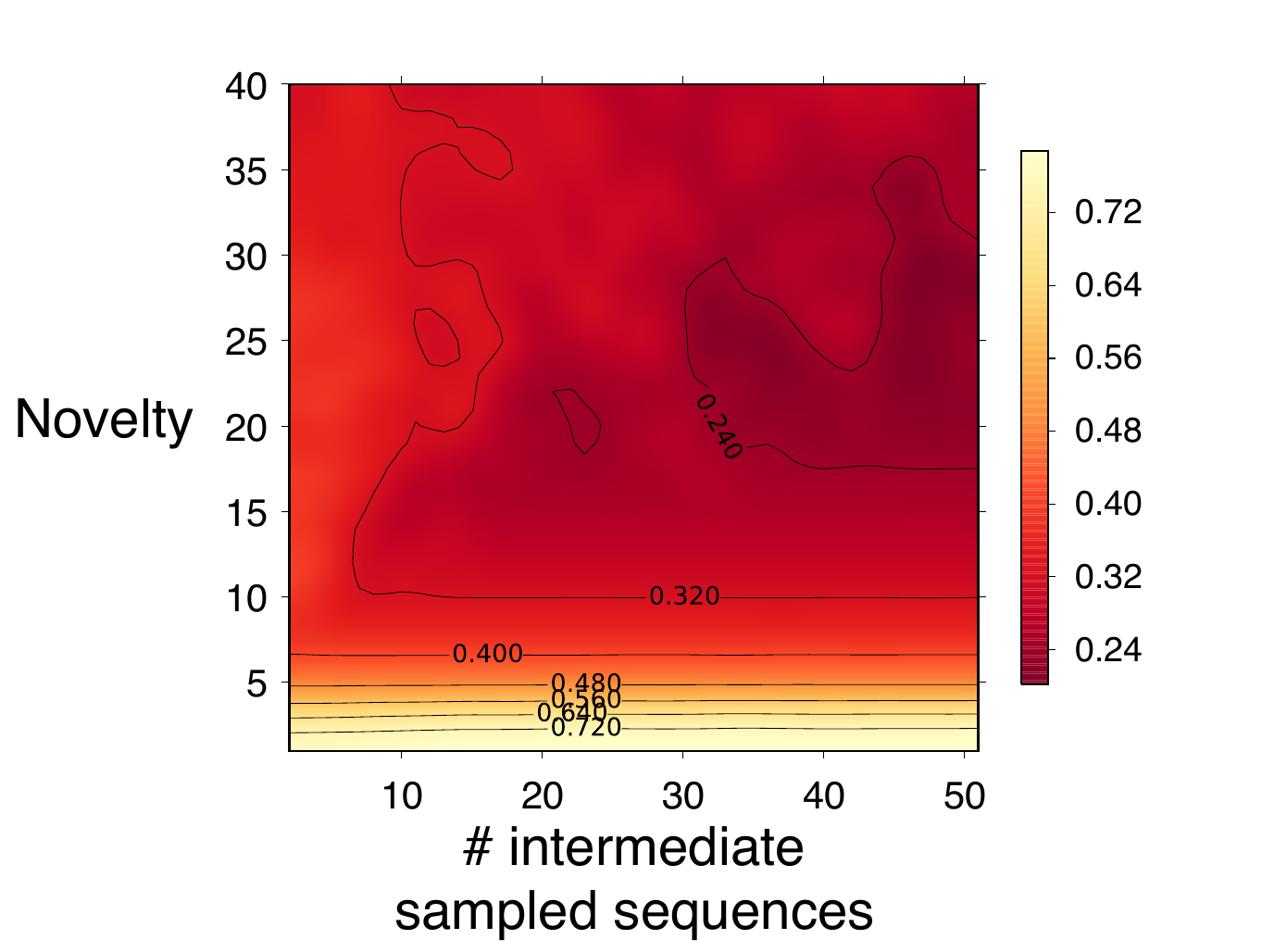}
	\end{subfigure} \\
	(e) & (f)
\end{tabular}
\caption{\scriptsize{Sensitivity of persistent homology in simulations of single recombination events. Panels (a,b): Fraction of simulations in which the recombination event was detected. Recombination breakpoint is either at the midpoint of (panel a) or uniformly distributed along (panel b) the genome. Panels (c,d): Each translucent point marks the result of a single simulation in which the recombination event was detected, the red line tracks the average bar length among all simulations with equal novelty, and the black line shows the least-squares fit of parameters $a$ and $b$ among functions $y = a \sqrt{x} + b$. Panel (c): Recombination breakpoint at midpoint of the genome. The fit of all 684{,}026 cases is $y = 0.30 \times \sqrt{x} + 0.40$. Panel (d): Recombination breakpoint uniformly distributed along the genome. The fit of all 134{,}830 cases is $y = 0.26 \times \sqrt{x} + 0.51$. Panels (e,f): Median of ratio of bar length to square root of novelty, conditional on the recombination event being detected (i.e., bar length $\geq 1$). Recombination breakpoint is either at the midpoint of (panel e) or uniformly distributed along (panel f) the genome.}}
\label{Fig:simulations}
\end{figure}

\section{Discussion}\label{Sec:Discussion}

In this paper, we have introduced \emph{novelty profiles}, simple statistics of an evolutionary history which not only count the number of recombination events in the history, but also quantify the contribution recombination makes to genetic diversity.  We have studied the problem of inferring information about a novelty profile from the persistent homology of sampled data, and have shown that under certain conditions, persistence barcodes of genomic data can be interpreted as lower bounds on novelty profiles.  Our results provide mathematical foundations for several earlier works which have used persistent homology to study recombination.    

\subsection{Potential Applications of the Novelty Profile}

Understanding the precise mechanisms by which recombination contributes to evolution is a long-standing problem in evolutionary biology.  Progress on this problem depends critically on the availability of suitable quantitative descriptors of recombination.  As a measure of the contribution of recombination to genetic diversity, the novelty profile captures information about recombination not captured by standard measures such as the recombination rate.  This information may be helpful for understanding how recombination drives evolution.

We describe one class of potential applications in this direction.  It is well known that reticulate evolution plays an important role both in the spread of infection and in the development of drug resistance.  Examples include the emergence of a norovirus pandemic \cite{eden2013recombination}, outbreaks of influenza (e.g., the Swine flu pandemic of 2009) \cite{ito1998molecular,trifonov2009geographic,solovyov2009cluster}, the emergence of resistance to anti-viral medication in HIV \cite{nora2007contribution}, and the spread of antibiotic resistance in bacteria (e.g., \textit{E. coli}) \cite{davies2010origins,rohde2011open}.

The novelty profile could be useful for developing a fuller quantitative understanding of the role of recombination in such epidemiological events.  We hypothesize that, compared to a count of recombination events alone, the novelty profile of a pathogen better predicts both future outbreaks of infection and the proliferation of drug resistance.   It could be interesting to test this hypothesis in simulation.  To test the hypothesis on real biological data, one needs well-behaved estimators of (statistics of) the novelty profile; our main bounds represent progress in this direction, and in \cref{Sec:Alternative_Strategies} below, we discuss alternative estimation approaches which may be more practical.  If the novelty profile is indeed predictive of outbreaks or of the proliferation of drug resistance, statistics derived from estimates of novelty profiles could potentially inform public health responses to infectious disease.

\subsection{On the Assumptions Underlying our Main Results}\label{Sec:Assumptions}
Our main results relating barcodes to novelty profiles depend on strong assumptions about the evolving population and genomic sampling.  In their simplest form, our results assume that the evolutionary history $\eh$ is indexed by a galled tree and that all genomes in the history are included in our sample.  Using the stability of persistent homology, we have extended these results to hold for an arbitrary sample $S$ of an arbitrary evolutionary history $\eh$.  The strength of the bounds provided by these extended results, relative to the ideal case of a galled tree with every genome sampled, is controlled by $d_{GH}(S,\eh)$, the Gromov-Hausdorff distance between $S$ and $\eh$, and $\Gall(\eh)$, the number of mutations in $\eh$ which must be ignored to obtain a history indexed by a galled tree.  In cases where $d_{GH}(S,\eh)$ and $\Gall(\eh)$ can be assumed to be small relative to the lengths of intervals in the barcodes $\B_i(S)$, our results provide an informative lower bound on the novelty profile, though the numerical results of \cref{Sec:Sensitivity_Numerical} suggest that this bound is typically far from tight.
 
These results raise three key questions about applications of our work: First, under what circumstances can real-world genomic samples be expected to exhibit small enough values of $d_{GH}(S,\eh)$ and $\Gall(\eh)$ for our bounds on the novelty profile to be useful?  Second, can the theory in this paper be extended to yield a useful topological bound on the novelty profile even when $d_{GH}(S,\eh)$ are $\Gall(\eh)$ not necessarily small? And third, can more sensitive bounds on the novelty profile be obtained? We discuss each of these questions below.

\bparagraph{The Small $\Gall(\eh)$ Condition}

While restrictive, the assumption that $\Gall(\eh)$ is small is biologically plausible in some settings.  As shown in \cref{Sec:Appendix} in the context of the coalescent model, the phylogenetic graph $G$ indexing a history $\eh$ will be a galled tree with high probability if and only if a relatively strong condition on the rareness of recombination is satisfied. It is important to note that this condition depends not only on the species studied, but also the sample size and how the genomic data is analyzed.  For example, in the study of human recombination, a key methodological choice is the size of the genomic window used for the analysis; while a larger window offers more accurate estimates of recombination rate, a smaller window better localizes recombination breakpoints \cite{Camara:2016eh}. Since recombination is also rarer in a smaller window, a sample's ancestry is more likely to be represented by a galled tree when using a smaller window. In a sample of 125 humans, for instance, the ancestry in an average 275 bp window is predicted to be a galled tree with 90\% probability (\cref{Sec:Appendix}). As recombination rates vary dramatically across segments of the human genome, the actual probability likewise varies. 

\bparagraph{The Small Gromov-Hausdorff Distance Condition}

For a typical genomic sample $S$, $d_{GH}(S,\eh)$ can be large.  Indeed, regardless of whether individuals are sampled simultaneously or longitudinally, the most recent common ancestor of two individuals in $S$ may be genetically distant from all individuals in $S$.

There are applications, however, where sampling is so dense, and so frequent, that we do expect common ancestors of sampled individuals to be genetically close to individuals in the sample. One such application is dense epidemiological sampling of HIV, for which entire countries have established long-term viral genomic surveillance \cite{Bezemer:2010hu, theSwissHIVCohortStudy:2010up}. In such cases, standard phylogentic methods, which assume that ancestors are absent from the sample, may produce misleading results \cite{Stadler:2011hn}. As genetic sequencing continues to decline in cost, it is likely for dense longitudinal genomic samples to become more common. For such data sets, particularly for pathogens where evolutionary time scales are short compared to sampling duration, the assumption that $d_{GH}(S,\eh)$ is small may be more reasonable.

Even for samples $S$ for which $d_{GH}(S,\eh)$ is large, our simulation results using the coalescent model in \cref{Sec:Violations} suggest that violations of the exact bound on the number of recombinations given by \cref{Thm:Lower_Bound}\,(i) are relatively rare, in part because of the limited sensitivity of persistence barcodes in detecting recombination (\cref{Sec:Sensitivity_Numerical}).   This, together with the empirical results from extensive simulations described in previous literature \cite{chan2013topology, Emmett:2014us, Camara:2016eh,Camara:2016kl}, give us hope that our main theoretical results may be extended to  probabilistic ones that yield useful bounds even when $d_{GH}(S,\eh)$ is large.

\subsection{Directions for Further Theoretical Work}
\mbox{}

\bparagraph{Extending our Results to More Complex Phylogenetic Graphs} It may be possible to extend our bounds to histories indexed by iterated sums of phylogenetic graphs with at most $k$ recombination events, at least for small $k$. (The galled tree setting is the case $k=1$.)  Such an extension would yield lower bounds on the novelty profile for a larger class of evolutionary histories.
The next logical step would be to study the case $k=2$.  In analogy with our main results, several questions arise about a history $\eh$ indexed by a phylogenetic graph with two recombinants:
\begin{itemize}
\item Does $B_1(\eh)$ have at most two bars?
\item For which degrees $i$ is $B_i(\eh)$ necessarily trivial?   
\item What can be said about the lengths of the intervals of $\B_i(\eh)$?
\end{itemize}

\bparagraph{Tightening our Lower Bounds on the Novelty Profile}

As shown numerically in \cref{Sec:Sensitivity_Numerical}, one limitation of barcodes as lower bounds on novelty profiles is their relatively low sensitivity to individual recombination events. A natural goal is to devise a more sensitive variant of our bounds, with similar theoretical guarantees.  Is it possible to develop a \emph{consistent} barcode estimator for the novelty profile, for a reasonable class of probabilistic models?  

As a step in this direction, it would be interesting to apply our independence result, \cref{Prop:Markov_Independent}, to obtain analytic results about the probability distribution on $\B_1(\eh)$, for $\eh$ a phylogenetically Markov random evolutionary history (e.g., Poisson) indexed by a galled tree.  Ideally, such results would explain the relationships between novelty profiles and barcodes observed empirically in \cref{Sec:Sensitivity_Numerical}.

\subsection{Practical Strategies for Estimating (Statistics of) Novelty Profiles}\label{Sec:Alternative_Strategies}
While this paper has focused primarily on bounding the novelty profile in terms of standard persistence barcodes, other approaches to estimation of the novelty profile may be more practical.  
There is a large literature on direct estimation of evolutionary histories (which, as noted earlier, are usually called \emph{ancestral recombination graphs}); see for example \cite{gusfield2014recombinatorics} and the references therein.  Though direct inference of histories is computationally difficult on larger data sets, recent approaches such as ARGWeaver \cite{Rasmussen:2014cq} are powerful enough to yield biological insights from some real data sets consisting of dozens or hundreds of genomes.  For such data sets, it may be feasible to use direct inference of a history to estimate the novelty profile.  Indeed, once one has the history, computing the novelty profile is straightforward.

For larger data sets, where estimation of a full evolutionary history is not feasible, a machine learning approach may be effective.  For context, a recent paper of Humphreys, McGuirl, Miyagi, and Blumberg \cite{humphreys2019fast} trains a regression model to predict recombination rates from Vietoris--Rips barcodes of genomic data.  The training data is obtained from simulations.  The authors demonstrate that this approach performs well, offering a good tradeoff between accuracy and scalability compared to a popular alternative approach called LDhelmet \cite{chan2012genome}.  To estimate statistics of the novelty profile, it may be worthwhile to develop an approach analogous to that of \cite{humphreys2019fast}.
\appendix

\section{Probability that the Coalescent with Recombination Generates a Galled Tree}
\label{Sec:Appendix}

\subsection{Overview of the Coalescent with Recombination}
The coalescent with recombination is a commonly used model of the evolutionary process generating a population genetic sample. For a detailed introduction to the coalescent with recombination, see  \cite{Wakeley:2007ua}.  Here, we give only a brief,  informal description.

	Instead of tracking an entire population, which may include millions or billions of reproducing organisms, a coalescent model tracks only the sampled individuals and their direct ancestors, up to their most recent common ancestor.  We can think of the coalescent with recombination as a dynamical model that generates a phylogenetic graph, together with a time function on it, by proceeding backward in time.  We start with an initial set of $n$ vertices at some fixed final time, corresponding to $n$ distinct lineages.  As we proceed backwards in time, we can merge two lineages by adding a vertex of in-degree one and out-degree two, representing a common ancestor.  We can also split a lineage into two distinct ones by adding a vertex of out-degree one and in-degree two, representing a recombinant.  We require that the phylogenetic graph we create is rooted, so any split must eventually resolve itself by a merge further back in time.  Once all lineages merge into the common ancestor, the graph-generating process stops; one can then generate the mutations at each vertex, using, e.g., a Poisson-type model as in \cref{Ex:Poisson}.  However, in this section, we will be concerned only with the underlying phylogenetic graph.

Two parameters are needed to specify the coalescent with recombination's graph generation process: the number of leaves $n$ and a recombination rate parameter $\rho$.    
The rate parameter equals twice the expected number of recombination events occurring in the entire population, per generation.   

\subsection{Probability of Generating a Galled Tree as the Solution of a Linear System}
Let $P(n,\rho)$ denote the probability that the coalescent with recombination generates a galled tree, given the parameters $n$ and $\rho$.  For fixed $n$, we derive a system of $\mathcal O(n^2)$ linear equations, depending on $\rho$, whose solution gives an analytic expression for $P(n,\rho)$ as a function of $\rho$.  As $n$ grows large, this expression becomes very complicated.  But each linear system is sparse, so it is easy to solve for $P(n,\rho)$ numerically for fixed values of $n$ and $\rho$, provided $n$ is not too large; see \cref{fig:prob_approx}.

In the coalescent model, there are two types of \emph{disallowed interactions} whose occurrence prevents the resulting graph from being a galled tree. First, after a split occurs, one of the two resulting branches may again split, prior to the resolution of the first split (\cref{fig:coalescentfigure}a).  Second, two unrelated splits may occur (resulting in four parental branches), after which a branch from one split joins with a branch from the other split (\cref{fig:coalescentfigure}b). 

To compute the probability $P(n,\rho)$ that the coalescent generates a galled tree, we track the number of lineages ($k$) and unresolved splits ($s$) at each step of the process.  The evolution of $k$ and $s$ is described by a discrete-time Markov chain whose state space is the finite set $T \cup \{X\},$ where
\[T:=\left\{(k,s)\, \middle|\, 1\leq k \leq 2n,\  \max(0,k-n)\leq s\leq \frac{k}{2}\right\},\]
and $X$ is an absorbing ``failure state" that we enter into when a disallowed interaction occurs.  $P(n,\rho)$ is the probability that we eventually reach state $(1,0)$ in this Markov Chain, starting from $(n,0)$.  

To complete the description of the Markov chain, we first specify the transitions that can occur, and then specify the probabilities of each these.  No self-transitions occur, with the exception that $X$ and $(1,0)$ are both absorbing states.  No two split or join events occur simultaneously in the coalescent; each transition thus corresponds to a single split or merge.

There are two types of splits: We may have a disallowed split, as described above and illustrated in (\cref{fig:coalescentfigure}a); or the split may be allowed, in which case the state transition is $(k,s) \mapsto (k+1,s+1)$ (\cref{fig:coalescentfigure}c).  
There are three types of joins:  We may have a disallowed join, as described above and illustrated in (\cref{fig:coalescentfigure}b); 
a split may resolve itself (transition $(k,s) \mapsto (k-1,s-1)$) (\cref{fig:coalescentfigure}d), 
or two branches may join without altering any split (transition $(k,s) \mapsto (k-1,s)$) (\cref{fig:coalescentfigure}e).  

The transition probabilities are obtained as ratios of rates: In the coalescent, for $k\geq 2$ the rate $r_{\spl}$ at which a split occurs is defined to be $\rho k/2$, and the rate $r_{\jn}$ at which two branches join together is defined to be $k(k-1)/2$ \cite{Wakeley:2007ua}.  The total rate of a split or merge is then $r_{\total}:=r_{\spl}+r_{\jn}$, i.e., $r_{\total}=k(k+\rho -1)/2$.

The rate $r_{\mathrm{\asplit}}$ of an allowed split is the product of $r_{\spl}$ with the fraction of branches for which splits are allowed, i.e., $r_{\mathrm{\asplit}}=\rho \left(k/2 - s\right)$.  For $k\geq 2$, let $p_S(k,s)$ denote the probability of an allowed split (i.e., a transition $(k,s)\mapsto (k,s+1)$).  Then \[p_S(k,s)= r_{\mathrm{\asplit}}/r_{\total}= \frac{\rho (k-2s)}{k(k+\rho-1)}.\]  The other transition probabilities are obtained analogously.  We denote them as follows: $p_{R}(k,s)$ is the probability that a split is resolved; $p_{J}(k,s)$ is the probability of an allowed join that does not resolve a split; and $p_{X}(k,s)$ is the probability of a  disallowed interaction of either type.  The formulas for these are given in \cref{fig:coalescentfigure}.  

\begin{figure}
\includegraphics[width=1.0\textwidth]{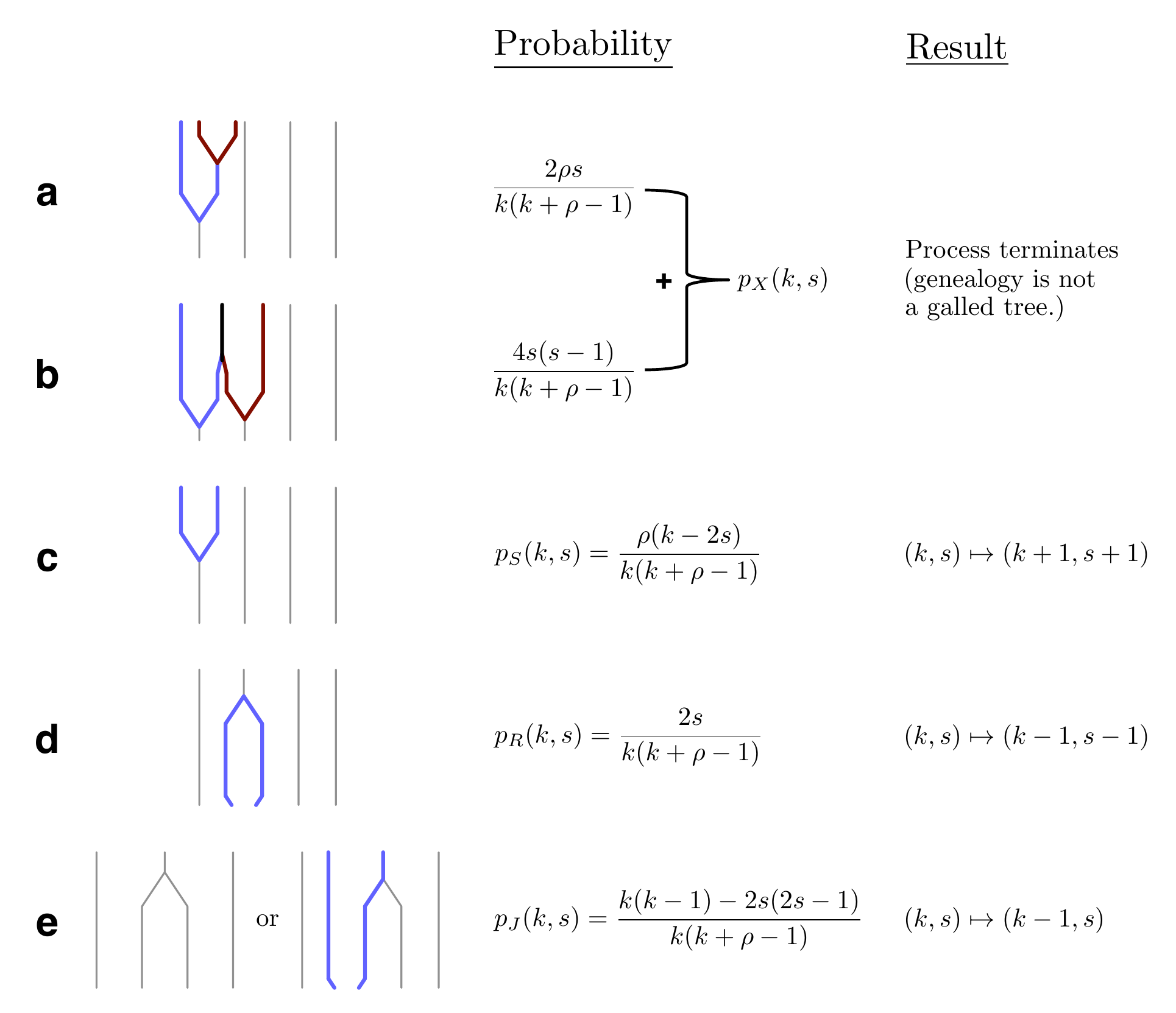}
\caption{Possible events in the coalescent process described in the text. Light gray lines indicate ordinary lineages, medium blue and dark red lines indicate pairs of parental lineages that split from their recombinant child lineages, and the black line indicates a join between the red and blue lineages. Each diagram is read upwards (back in time). To the right of each diagram are the corresponding transition probability and the resulting change in state of the Markov chain, in terms of population-scaled recombination rate $\rho$, number of lineages $k$, and number of unresolved splits $s$. Where multiple events are depicted, the probability is given only for the topmost (most ancient) event. (a) A disallowed split occurs when a lineage that has already split (light blue) splits again (dark red). (b) A disallowed join occurs when parental lineages from two separate splitting events (blue, red) join together (black).  (c) An allowed split.  (d) A join that resolves a split. (e)  An allowed join that does not resolve a split.}
\label{fig:coalescentfigure}
\end{figure}

For $(k,s)\in T$, let $f(k,s)$ be the probability that we eventually generate a galled tree, given that the current state is $(k,s)$, and for $(k,s)\in \Z^2\setminus T$, let $f(k,s)=0$.  Thus, $P(n,\rho)=f(n,0)$.  The $f(k,s)$ satisfy the linear system
\begin{align}
\begin{split}
\label{eq:recurrence_full}
f(k,s) &= p_{S}(k,s) f(k+1,s+1) \\
         &+p_{R}(k,s) f(k-1,s-1)\\
         &+ p_{J}(k,s) f(k-1,s) \quad \textup{ for } (k,s)\in T\setminus \{(1,0)\},\\
         f(1,0) &= 1.
\end{split}
\end{align}

\cref{fig:prob_approx} gives values of $P(n,\rho)$ for several choices of $\rho$ and $n$, obtained by solving this linear system numerically.  We observe that for fixed $n$, $P(n,\rho)$ tends to 1 as $\rho$ tends to 0.  Varying both parameters, $1-P(n,\rho)$ appears to decrease on the order $\rho^2 \left(\text{log } n\right)^2$ for large $n$ and small $\rho$.

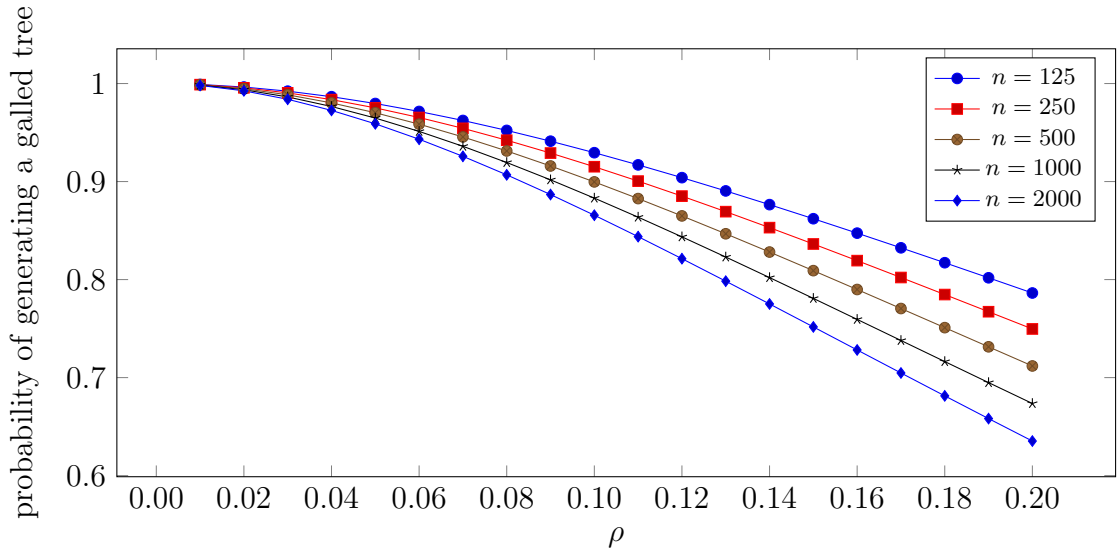
\begin{figure}
\centering
\begin{tikzpicture}
\begin{axis}[
  xlabel=$\rho$,
  ylabel=probability of generating a galled tree,width=.9\textwidth,height=\axisdefaultheight, x tick label style={
        /pgf/number format/.cd,
            fixed,
            fixed zerofill,
            precision=2,
        /tikz/.cd
    }]
\addplot table [x index=5, y index=0,col sep=comma]{Galled_Tree_Probabilities.csv};
\addlegendentry{\scriptsize $n=125$}
\addplot table [x index=5, y index=1,col sep=comma]{Galled_Tree_Probabilities.csv};
\addlegendentry{\scriptsize $n=250$}
\addplot table [x index=5, y index=2,col sep=comma]{Galled_Tree_Probabilities.csv};
\addlegendentry{\scriptsize $n=500$}
\addplot table [x index=5, y index=3,col sep=comma]{Galled_Tree_Probabilities.csv};
\addlegendentry{\scriptsize $n=1000$}
\addplot table [x index=5, y index=4,col sep=comma]{Galled_Tree_Probabilities.csv};
\addlegendentry{\scriptsize $n=2000$}
\end{axis}
\end{tikzpicture}
\caption{Probability that the coalescent with recombination yields a galled tree, for several values of the recombination rate parameter $\rho$ and the number of sampled genomes $n$.  We see that for fixed $n$, the probability of obtaining a galled tree tends to 1 as $\rho$ tends to 0.}
\label{fig:prob_approx}
\end{figure}

To give a sense of scale for human population genetics (species effective population size $N_e \approx 10^4$, recombination rate $c \approx 10^-8~\text{bp}^{-1}$), in a sample of 125 individuals ($n=250$ haploid genomes), to ensure that disallowed interactions occur with probability less than $0.1$ ($P(n,\rho) > 0.9$), the requirement shown in \cref{fig:prob_approx} is $\rho < 0.11$. Using $\rho = 4 N_e c L$, where $L$ is the length of the genome segment analyzed, this requirement becomes $L < 275$ bp. In a sample of one thousand ($n=2000$ in \cref{fig:prob_approx}), the requirement is instead $\rho < 0.08$, or $L < 200$ bp. Extrapolating from the apparent $\rho^2 \left(\text{log } n\right)^2$ scaling, in a sample of one million ($n=2\times 10^6$), the requirement is $\rho < 0.04$, or $L < 100$ bp.

\section{Subsamples Rarely Violate our Theoretical Bounds for Complete Samples}\label{Sec:Violations}
\cref{Ex:Subsample_Counterexample} makes clear that \cref{Thm:Lower_Bound}\,(i), our persistent homology lower bound on the novelty profile of an evolutionary history, does not hold for arbitrary samples $S$ of the history $\eh$.  Nevertheless, one might hope that violations of \cref{Thm:Lower_Bound} are relatively rare.  Here we use simulations to explore this question in the coalescent model with recombination.  

Our computations focus on how often the number of intervals in $\B_1(S)$ exceeds the number of recombinants in $\eh$.  We find that in our simulations, this happens quite rarely,  though it does occur.  We did not consider the frequency of other kinds of violations 
of \cref{Thm:Lower_Bound} for subsamples, though it would be interesting to do so.

We simulated over 42{,}000 evolutionary histories, assuming a constant population size and using parameters $n=10,$ 15, or 20; $\rho=1,$ 2, 3, or 4; and $\theta=5,$ 10, or 30, where $\theta$ is the mutation rate parameter for the coalescent \cite{Wakeley:2007ua}.  We used rejection sampling, retaining only those histories that were indexed by galled trees.  Consistent with \cref{Sec:Phylogenetic_graphs_and_populations}, we used an infinite-sites model of mutation.  Each genetic site in a recombinant offspring inherited the state of a parent with probability one-half. For each simulated history, we counted the number of detectable recombination events that took place -- defined as the number of events giving rise to a recombinant that generates an incompatibility according to the four-gamete test \cite{Hudson:1985wh}. We then computed the maximum $|\B_1(S)|$ among 2500 random subsamples $S$ of the history, with 5 to 30 genomes in each subsample. Among all simulations, nine had a maximum $|\B_1(S)|$ greater than the true number of detectable recombination events (\cref{Tbl:b1_versus_numRC_in_coalsims}). The rarity of this violation suggests that counterexamples such as \cref{Ex:Subsample_Counterexample} may be uncommon in actual population-genetic data.

\begin{table}[!h]
\centering
\caption{Counts of coalescent simulations, by number of detectable recombination events and maximum number of intervals in $\B_1(S)$ among all subsamples $S$.  For nine of the simulations (shown in red), the maximum number of intervals in $\B_1(S)$ exceeds the number of detectable recombinations.}
\label{Tbl:b1_versus_numRC_in_coalsims}
\begin{tabular}{cccccccc}
                                                                                                                 &   & \multicolumn{6}{c}{Max. $|\B_1(S)|$ among all subsamples $S$} \\                                                                                                                                                                                                                       
                                                                                                                 &   & \multicolumn{1}{|c|}{0}     & \multicolumn{1}{c|}{1}                         & \multicolumn{1}{c|}{2}                                                & \multicolumn{1}{c|}{3}                         & \multicolumn{1}{c|}{4}                         & 5                         \\                                                                                                                                                              
                                                                                                                 \cline{3-8} 
                                                                                                                 \hhline{~|*{5}{-}}
                                                                                                                 & 0 & \multicolumn{1}{|c|}{17953} & \multicolumn{1}{c|}{0} & \multicolumn{1}{c|}{0}                        & \multicolumn{1}{c|}{0} & \multicolumn{1}{c|}{0} &  \multicolumn{1}{c}{0}   \\
                                                                                                                 & 1 & \multicolumn{1}{|c|}{9346}  & \multicolumn{1}{c|}{10049}                     & \multicolumn{1}{c|}{\textcolor{red}{9}} & \multicolumn{1}{c|}{0} & \multicolumn{1}{c|}{0} &  \multicolumn{1}{c}{0}  \\
                                                                                                                 & 2 & \multicolumn{1}{|c|}{1159}  & \multicolumn{1}{c|}{2632}                      & \multicolumn{1}{c|}{714}                                              & \multicolumn{1}{c|}{0} & \multicolumn{1}{c|}{0} &   \multicolumn{1}{c}{0} \\
                                                                                                                 & 3 & \multicolumn{1}{|c|}{126}   & \multicolumn{1}{c|}{395}                       & \multicolumn{1}{c|}{182}                                              & \multicolumn{1}{c|}{14}                        & \multicolumn{1}{c|}{0} & \multicolumn{1}{c}{0}   \\
                                                                                                                 & 4 & \multicolumn{1}{|c|}{16}    & \multicolumn{1}{c|}{48}                        & \multicolumn{1}{c|}{39}                                               & \multicolumn{1}{c|}{3}                         & \multicolumn{1}{c|}{0}                         &  \multicolumn{1}{c}{0}  \\
\multirow{-6}{*}{\begin{tabular}[c]{@{}c@{}}Number of\\ detectable\\ recombination\\ events in history\end{tabular}} & 5 & \multicolumn{1}{|c|}{0}     & \multicolumn{1}{c|}{5}                         & \multicolumn{1}{c|}{2}                                                & \multicolumn{1}{c|}{0}                         & \multicolumn{1}{c|}{0}                         & 0                        
\end{tabular}
\end{table}

\bibliographystyle{abbrv}
{\small \bibliography{Recombination_Refs} }

\begin{thebibliography}{10}

\bibitem{adamaszek2017vietoris}
{\sc M.~Adamaszek and H.~Adams}, {\em The {V}ietoris-{R}ips complexes of a
  circle}, Pacific Journal of Mathematics, 290 (2017), pp.~1--40.

\bibitem{adamaszek2017vietorisGluing}
{\sc M.~Adamaszek, H.~Adams, E.~Gasparovic, M.~Gommel, E.~Purvine,
  R.~Sazdanovic, B.~Wang, Y.~Wang, and L.~Ziegelmeier}, {\em Vietoris-rips and
  cech complexes of metric gluings}, in 34th International Symposium on
  Computational Geometry (SoCG 2018), Schloss Dagstuhl-Leibniz-Zentrum fuer
  Informatik, 2018.

\bibitem{adamaszek2017vietorisB}
{\sc M.~Adamaszek, H.~Adams, and S.~Reddy}, {\em On {V}ietoris-{R}ips complexes
  of ellipses}, Journal of Topology and Analysis,  (2017), pp.~1--30.

\bibitem{Arenas:2008hb}
{\sc M.~Arenas, G.~Valiente, and D.~Posada}, {\em {Characterization of
  reticulate networks based on the coalescent with recombination.}}, Molecular
  biology and evolution, 25 (2008), pp.~2517--2520.

\bibitem{bauer2015induced}
{\sc U.~Bauer and M.~Lesnick}, {\em Induced matchings and the algebraic
  stability of persistence barcodes}, Journal of Computational Geometry, 6
  (2015), pp.~162--191.

\bibitem{Bezemer:2010hu}
{\sc D.~Bezemer, A.~van Sighem, V.~V. Lukashov, L.~van~der Hoek, N.~Back,
  R.~Schuurman, C.~A.~B. Boucher, E.~C.~J. Claas, M.~C. Boerlijst, R.~A.
  Coutinho, F.~de~Wolf, and {ATHENA observational cohort}}, {\em {Transmission
  networks of HIV-1 among men having sex with men in the Netherlands.}}, AIDS
  (London, England), 24 (2010), pp.~271--282.

\bibitem{blumberg2017universality}
{\sc A.~J. Blumberg and M.~Lesnick}, {\em Universality of the homotopy
  interleaving distance}, arXiv preprint arXiv:1705.01690,  (2017).

\bibitem{Camara:2016kl}
{\sc P.~G. C{\'a}mara, A.~J. Levine, and R.~Rabad{\'a}n}, {\em Inference of
  ancestral recombination graphs through topological data analysis}, PLoS
  Comput Biol, 12 (2016), p.~e1005071.

\bibitem{Camara:2016eh}
{\sc P.~G. C\'amara, D.~I.~S. Rosenbloom, K.~J. Emmett, A.~J. Levine, and
  R.~Rabad\'an}, {\em {Topological Data Analysis Generates High-Resolution,
  Genome-wide Maps of Human Recombination.}}, Cell systems, 3 (2016),
  pp.~83--94.

\bibitem{carlsson2009topology}
{\sc G.~Carlsson}, {\em Topology and data}, Bulletin of the American
  Mathematical Society, 46 (2009), pp.~255--308.

\bibitem{carlsson2014topological}
{\sc G.~Carlsson}, {\em Topological pattern recognition for point cloud data},
  Acta Numerica, 23 (2014), pp.~289--368,
  \url{https://doi.org/10.1017/s0962492914000051}.

\bibitem{chan2012genome}
{\sc A.~H. Chan, P.~A. Jenkins, and Y.~S. Song}, {\em Genome-wide fine-scale
  recombination rate variation in drosophila melanogaster}, PLoS genetics, 8
  (2012), p.~e1003090.

\bibitem{chan2013topology}
{\sc J.~Chan, G.~Carlsson, and R.~Rabad\'an}, {\em Topology of viral
  evolution}, Proceedings of the National Academy of Sciences, 110 (2013).

\bibitem{chazal2009proximity}
{\sc F.~Chazal, D.~Cohen-Steiner, M.~Glisse, L.~Guibas, and S.~Oudot}, {\em
  {Proximity of persistence modules and their diagrams}}, in Proceedings of the
  25\textsuperscript{th} annual symposium on Computational geometry, ACM, 2009,
  pp.~237--246.

\bibitem{chazal2009gromov}
{\sc F.~Chazal, D.~Cohen-Steiner, L.~Guibas, F.~M{\'e}moli, and S.~Oudot}, {\em
  {Gromov-Hausdorff stable signatures for shapes using persistence}}, in
  Proceedings of the Symposium on Geometry Processing, Eurographics
  Association, 2009, pp.~1393--1403.

\bibitem{chazal2012structure}
{\sc F.~Chazal, V.~de~Silva, M.~Glisse, and S.~Oudot}, {\em The Structure and
  Stability of Persistence Modules}, Springer International Publishing, 2016,
  \url{https://doi.org/10.1007/978-3-319-42545-0},
  \url{https://doi.org/10.1007%2F978-3-319-42545-0}.

\bibitem{chazal2014persistence}
{\sc F.~Chazal, V.~De~Silva, and S.~Oudot}, {\em Persistence stability for
  geometric complexes}, Geometriae Dedicata, 173 (2014), pp.~193--214.

\bibitem{cohen2007stability}
{\sc D.~Cohen-Steiner, H.~Edelsbrunner, and J.~Harer}, {\em {Stability of
  persistence diagrams}}, Discrete and Computational Geometry, 37 (2007),
  pp.~103--120.

\bibitem{crawley2012decomposition}
{\sc W.~Crawley-Boevey}, {\em Decomposition of pointwise finite-dimensional
  persistence modules}, Journal of Algebra and Its Applications, 14 (2015),
  p.~1550066.

\bibitem{davies2010origins}
{\sc J.~Davies and D.~Davies}, {\em Origins and evolution of antibiotic
  resistance}, Microbiology and Molecular Biology Reviews, 74 (2010),
  pp.~417--433.

\bibitem{durrett2010probability}
{\sc R.~Durrett}, {\em Probability: theory and examples}, Cambridge University
  Press, 2010.

\bibitem{edelsbrunner2010computational}
{\sc H.~Edelsbrunner and J.~Harer}, {\em Computational topology: an
  introduction}, American Mathematical Society, 2010.

\bibitem{eden2013recombination}
{\sc J.~S. Eden, M.~M. Tanaka, M.~F. Boni, W.~D. Rawlinson, and P.~A. White},
  {\em Recombination within the pandemic norovirus gii. 4 lineage}, Journal of
  Virology, 87 (2013), pp.~6270--6282.

\bibitem{Emmett:2014us}
{\sc K.~Emmett, D.~Rosenbloom, P.~C\'amara, and R.~Rabad\'an}, {\em {Parametric
  inference using persistence diagrams: A case study in population genetics}},
  Proc. 31st Intl. Conf. Machine Learning,  (2014),
  \url{https://arxiv.org/abs/C21B8DD4-D6F5-4057-ADCC-BAF5DCC28A7B}.

\bibitem{Emmett:2014um}
{\sc K.~J. Emmett and R.~Rabad\'an}, {\em {Characterizing Scales of Genetic
  Recombination and Antibiotic Resistance in Pathogenic Bacteria Using
  Topological Data Analysis}}, Lecture Notes in Computer Science, 8609 (2014),
  pp.~540--551.

\bibitem{forman2002user}
{\sc R.~Forman}, {\em A user's guide to discrete {M}orse theory}, S\'em.
  Lothar. Combin., 48 (2002), pp.~Art.\ B48c, 35.

\bibitem{gusfield2014recombinatorics}
{\sc D.~Gusfield}, {\em ReCombinatorics: the algorithmics of ancestral
  recombination graphs and explicit phylogenetic networks}, MIT Press, 2014.

\bibitem{gusfield2003efficient}
{\sc D.~Gusfield, S.~Eddhu, and C.~Langley}, {\em Efficient reconstruction of
  phylogenetic networks with constrained recombination}, in Bioinformatics
  Conference, 2003. CSB 2003. Proceedings of the 2003 IEEE, IEEE, 2003,
  pp.~363--374.

\bibitem{levanger2019comparison}
{\sc S.~Harker, M.~Kram{\'a}r, R.~Levanger, and K.~Mischaikow}, {\em A
  comparison framework for interleaved persistence modules}, Journal of Applied
  and Computational Topology, 3 (2019), pp.~85--118,
  \url{https://doi.org/10.1007/s41468-019-00026-x},
  \url{https://doi.org/10.1007/s41468-019-00026-x}.

\bibitem{hatcher2002algebraic}
{\sc A.~Hatcher}, {\em {Algebraic topology}}, Cambridge University Press, 2002.

\bibitem{Hudson:1985wh}
{\sc R.~R. Hudson and N.~L. Kaplan}, {\em {Statistical properties of the number
  of recombination events in the history of a sample of DNA sequences.}},
  Genetics, 111 (1985), pp.~147--164.

\bibitem{humphreys2019fast}
{\sc D.~P. Humphreys, M.~R. McGuirl, M.~Miyagi, and A.~J. Blumberg}, {\em Fast
  estimation of recombination rates using topological data analysis}, Genetics,
   (2019), pp.~genetics--301565.

\bibitem{huson2010phylogenetic}
{\sc D.~H. Huson, R.~Rupp, and C.~Scornavacca}, {\em Phylogenetic networks:
  concepts, algorithms and applications}, Cambridge University Press, 2010.

\bibitem{ito1998molecular}
{\sc T.~Ito, J.~N. S.~S. Couceiro, S.~Kelm, L.~G. Baum, S.~Krauss, M.~R.
  Castrucci, I.~Donatelli, H.~Kida, J.~C. Paulson, R.~G. Webster, and
  Y.~Kawaoka}, {\em Molecular basis for the generation in pigs of influenza {A}
  viruses with pandemic potential}, Journal of Virology, 72 (1998),
  pp.~7367--7373.

\bibitem{kahle2011random}
{\sc M.~Kahle}, {\em Random geometric complexes}, Discrete \& Computational
  Geometry, 45 (2011), pp.~553--573.

\bibitem{kallenberg2006foundations}
{\sc O.~Kallenberg}, {\em Foundations of modern probability}, Springer Science
  \& Business Media, 2006.

\bibitem{kozlov2008combinatorial}
{\sc D.~Kozlov}, {\em Combinatorial algebraic topology}, vol.~21 of Algorithms
  and Computation in Mathematics, Springer, Berlin, 2008,
  \url{https://doi.org/10.1007/978-3-540-71962-5},
  \url{http://dx.doi.org/10.1007/978-3-540-71962-5}.

\bibitem{lauritzen1996graphical}
{\sc S.~L. Lauritzen}, {\em Graphical Models}, Oxford University Press, 1996.

\bibitem{munkres1984elements}
{\sc J.~Munkres}, {\em {Elements of Algebraic Topology}}, Prentice Hall, 1984,
  \url{http://www.worldcat.org/isbn/0131816292}.

\bibitem{Myers:2003tv}
{\sc S.~R. Myers and R.~C. Griffiths}, {\em {Bounds on the minimum number of
  recombination events in a sample history.}}, Genetics, 163 (2003),
  pp.~375--394.

\bibitem{nora2007contribution}
{\sc T.~Nora, C.~Charpentier, O.~Tenaillon, C.~Hoede, F.~Clavel, and A.~J.
  Hance}, {\em Contribution of recombination to the evolution of human
  immunodeficiency viruses expressing resistance to antiretroviral treatment},
  Journal of virology, 81 (2007), pp.~7620--7628.

\bibitem{oudot2015persistence}
{\sc S.~Y. Oudot}, {\em Persistence Theory: From Quiver Representations to Data
  Analysis}, no.~209 in AMS Mathematical Surveys and Monographs, American
  Mathematical Society, 2015.

\bibitem{parida2015topological}
{\sc L.~Parida, F.~Utro, D.~Yorukoglu, A.~P. Carrieri, D.~Kuhn, and S.~Basu},
  {\em Topological signatures for population admixture}, in International
  Conference on Research in Computational Molecular Biology, Springer, 2015,
  pp.~261--275.

\bibitem{rabadan2019topological}
{\sc R.~Rabad{\'a}n and A.~J. Blumberg}, {\em Topological Data Analysis for
  Genomics and Evolution: Topology in Biology}, Cambridge University Press,
  2019.

\bibitem{Rasmussen:2014cq}
{\sc M.~D. Rasmussen, M.~J. Hubisz, I.~Gronau, and A.~Siepel}, {\em
  {Genome-Wide Inference of Ancestral Recombination Graphs}}, PLoS Genetics, 10
  (2014), p.~e1004342.

\bibitem{rohde2011open}
{\sc H.~Rohde, J.~Qin, Y.~Cui, D.~Li, N.~J. Loman, M.~Hentschke, W.~Chen,
  F.~Pu, Y.~Peng, J.~Li, et~al.}, {\em Open-source genomic analysis of
  shiga-toxin-producing e. coli o104: H4}, New England Journal of Medicine, 365
  (2011), pp.~718--724.

\bibitem{solovyov2009cluster}
{\sc A.~Solovyov, G.~Palacios, T.~Briese, W.~I. Lipkin, and R.~Rabad\'an}, {\em
  Cluster analysis of the origins of the new influenza {A} ({H1N1}) virus},
  Euro surveillance: bulletin Europeen sur les maladies transmissibles =
  European communicable disease bulletin, 14 (2009).

\bibitem{Song:2005cg}
{\sc Y.~S. Song, Y.~Wu, and D.~Gusfield}, {\em {Efficient computation of close
  lower and upper bounds on the minimum number of recombinations in biological
  sequence evolution.}}, Bioinformatics (Oxford, England), 21 Suppl 1 (2005),
  pp.~i413--22.

\bibitem{Stadler:2011hn}
{\sc T.~Stadler, R.~Kouyos, V.~von Wyl, S.~Yerly, J.~Boni, P.~Burgisser,
  T.~Klimkait, B.~Joos, P.~Rieder, D.~Xie, H.~F. Gunthard, A.~J. Drummond,
  S.~Bonhoeffer, and {the Swiss HIV Cohort Study}}, {\em {Estimating the Basic
  Reproductive Number from Viral Sequence Data}}, Molecular biology and
  evolution, 29 (2011), pp.~347--357.

\bibitem{theSwissHIVCohortStudy:2010up}
{\sc {The Swiss HIV Cohort Study}}, {\em {Cohort profile: the Swiss HIV Cohort
  study}}, International Journal of Epidemiology, 39 (2010), pp.~1179--1189.

\bibitem{trifonov2009geographic}
{\sc V.~Trifonov, H.~Khiabanian, R.~Rabad\'an, et~al.}, {\em Geographic
  dependence, surveillance, and origins of the 2009 influenza a (h1n1) virus.},
  New England Journal of Medicine, 361 (2009), pp.~115--119.

\bibitem{Wakeley:2007ua}
{\sc J.~Wakeley}, {\em {Coalescent Theory}}, An Introduction, Roberts {\&} Co.,
  2007.

\bibitem{wang2001perfect}
{\sc L.~Wang, K.~Zhang, and L.~Zhang}, {\em Perfect phylogenetic networks with
  recombination}, Journal of Computational Biology, 8 (2001), pp.~69--78.

\end{thebibliography}

\end{document}